\newcommand{\BZ}{{\mathbb{Z}}}
\newcommand{\BN}{{\mathbb{N}}}
\newcommand{\BA}{{\mathbb{A}}}
\newcommand{\BK}{{\mathbb{K}}}
\newcommand{\OO}{{\mathcal{O}}}
\newcommand{\gb}{\beta}
\newcommand{\gC}{\Gamma}
\newcommand{\gO}{\Omega}
\newcommand{\ga}{\alpha}
\newcommand{\WAP}{\text{WAP}}
\newcommand{\End}{\text{End}}
\newcommand{\SL}{\text{SL}}
\newcommand{\GL}{\text{GL}}
\newcommand{\SO}{\text{SO}}
\newcommand{\Iso}{\text{Iso}}
\newcommand{\id}[1]{id_{#1}}
\newtheorem{prop}{Proposition}[section]
\newtheorem{thm}[prop]{Theorem}
\newtheorem{lem}[prop]{Lemma}
\newtheorem{cor}[prop]{Corollary}
\theoremstyle{definition}
\newtheorem{defn}[prop]{Definition}
\newtheorem{rem}[prop]{Remark}
\newtheorem{exam}[prop]{Example}
\long\def\@savemarbox#1#2{\global\setbox#1\vtop{\hsize\marginparwidth 
  \@parboxrestore\tiny\raggedright #2}}
\newcommand{\act}{\curvearrowright}
\newcommand{\bbR}{{\mathbb R}}
\newcommand{\bbC}{{\mathbb C}}
\newcommand{\Prob}{\operatorname{Prob}}
\newcommand{\conv}{\operatorname{conv}}
\newcommand{\acts}{\curvearrowright}
\newcommand{\form}[2]{{\langle #1,#2 \rangle}}
\begin{document}

\title{Equicontinuous actions of semisimple groups}

\date{July  26, 2015}

\author{Uri Bader and Tsachik Gelander}\thanks{The research was partly supported by the ERC and the ISF}

\begin{abstract}
We study equicontinuous actions of semisimple groups and some generalizations.
We prove that any such action is universally closed, and in particular proper.
We derive various applications, both old and new, including closedness of continuous homomorphisms, nonexistence of weaker topologies, metric ergodicity of transitive actions and vanishing of matrix coefficients for reflexive (more generally: WAP) representations. 
\end{abstract}
\maketitle
\tableofcontents

\section{Historical prelude and introduction}

We begin by presenting some of the history of the ideas around the so called ``Mautner phenomenon", Moore ergodicity theorem, the Howe--Moore theorem
and related topics.
This mixture of ideas and techniques fascinatingly relates ergodic theory, representation theory, topological group theory and metric geometry.

The term ``Mautner phenomenon" is used to describe the idea behind the following easy lemma and its various generalizations.

\begin{lem} \label{lem:maut}
Let $G$ be a topological group acting isometrically on a metric space $(X,d)$ such that the homomorphism $G\to \Iso(X)$ is continuous.
Assume that $(a_n)$ is a sequence in $G$, and $u\in G$ is an element such that $\lim_n u^{a_n}=e$ in $G$.
If $x\in X$ satisfies $\lim_{n} a_nx=x$ then $ux=x$.
\end{lem}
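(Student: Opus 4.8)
The plan is to prove the equality $ux=x$ by showing $d(ux,x)=0$ directly, via a three-term triangle inequality, exploiting the isometry property of the action to convert the convergence $u^{a_n}\to e$ into control on the displacement of $u$ at the translated points $a_n x$. The conceptual heart of the argument is one elementary identity: since each $a_n$ acts as an isometry,
\[
d(u\cdot a_n x,\, a_n x)=d\bigl(a_n^{-1}u a_n x,\, a_n^{-1}a_n x\bigr)=d\bigl(u^{a_n}x,\, x\bigr),
\]
where $u^{a_n}=a_n^{-1}u a_n$. In words, conjugating $u$ by $a_n$ is the same as transporting the displacement that $u$ produces to the point $a_n x$; this is precisely the mechanism underlying the Mautner phenomenon.

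First I would feed the hypothesis $u^{a_n}\to e$ into the continuity of $G\to\Iso(X)$. Continuity at the identity (for the topology of pointwise convergence on $\Iso(X)$, which is all that is required) yields $u^{a_n}x\to x$, hence $d(u^{a_n}x,x)\to 0$. Combined with the identity above, this gives $d(u\cdot a_n x,\, a_n x)\to 0$.

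Next I would assemble the triangle inequality
\[
d(ux,x)\le d(ux,\, u\cdot a_n x)+d(u\cdot a_n x,\, a_n x)+d(a_n x,\, x).
\]
Here the first term equals $d(x, a_n x)$ because $u$ acts by an isometry, and it tends to $0$ by the hypothesis $a_n x\to x$; the middle term tends to $0$ by the previous step; and the last term tends to $0$ again by the hypothesis $a_n x\to x$. Since the left-hand side is independent of $n$, letting $n\to\infty$ forces $d(ux,x)=0$, that is, $ux=x$.

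The argument is in essence careful bookkeeping with the triangle inequality, so I do not expect a serious obstacle. The only step requiring attention is the implication $u^{a_n}\to e\Rightarrow u^{a_n}x\to x$, which is exactly where the continuity of the homomorphism $G\to\Iso(X)$ is used; one should make sure the topology implicit on $\Iso(X)$ is at least as strong as pointwise convergence so that this passage is legitimate. Everything else relies only on the fact that both $u$ and the $a_n$ act by isometries.
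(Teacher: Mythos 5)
Your proof is correct and is essentially the paper's own argument: both exploit the identity $d(u\cdot a_nx,a_nx)=d(u^{a_n}x,x)$ (isometry transport of the displacement) together with continuity of $G\to\Iso(X)$ to conclude $d(ux,x)=0$. The only differences are cosmetic: the paper writes it as a one-line chain of equalities using the points $a_n^{-1}x$ (with the convention $u^{a}=aua^{-1}$), whereas you phrase it as a three-term triangle inequality at the points $a_nx$ (with $u^{a}=a^{-1}ua$), which is legitimate since $a_nx\to x$ implies $a_n^{-1}x\to x$ for isometries.
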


The beautiful one line proof is given by
\[ d(ux,x)=\lim_n d(ua_n^{-1}x,a_n^{-1}x)=\lim_n d(u^{a_n}x,x)=d(\lim_n u^{a_n}x,x)=d(ex,x)=0.\]

It seems to us that the first documented use of this idea is from 1950 by Segal and von Neumann, see \cite[Lemma 1]{SVN}.
Mautner used the ``Mautner phenomenon" in his 1957 paper \cite{Mautner} to establish the ergodicity of the geodesic flow on finite volume locally symmetric spaces,
applying it to the dynamics of the associated semisimple Lie group on a corresponding unitary representation.
Mautner's result dramatically generalizes Hopf's result \cite{Hopf} which treats the rank one case, by a beautiful geometric argument.
The powerful idea of using unitary representations for the study of ergodicity of the geodesic flow is due to Gelfand and Fomin \cite{GF}
who used it for the special case of manifolds of dimension 2 and 3, using the explicit classification of the irreducible representation of $SL_2(\bbR)$ and $\SL_2(\bbC)$.

Mautner's work was shortly after generalized by Moore who showed in \cite{Moore} 
that for every ergodic probability measure preserving action of a simple Lie group, every one parameter subgroup is mixing.
Moore's work, in turn, was generalized by Zimmer and Howe who obtained  independently the following well known theorem.

\begin{thm}[{\cite[Theorem~5.2]{Zimmer}},{\cite[Theorem~5.1]{HM}}]
In a unitary representation of a simple Lie group which has no non-zero invariant vectors, the matrix coefficients tend to 0 at infinity. 
\end{thm}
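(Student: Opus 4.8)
The plan is to run the classical ``Mautner phenomenon'' argument, applying the isometric action of $G$ on the Hilbert space $\mathcal{H}$ of the representation $\pi$ (the continuity hypothesis of Lemma~\ref{lem:maut} holds since $\pi$ is unitary). Suppose, toward a contradiction, that some matrix coefficient does not vanish at infinity: there are vectors $\xi,\eta\in\mathcal{H}$, a constant $c\neq0$, and a sequence $g_n\to\infty$ in $G$ with $\langle\pi(g_n)\xi,\eta\rangle\to c$. Using the Cartan decomposition $G=KA^+K$, with $K$ a maximal compact subgroup and $A^+$ a closed positive Weyl chamber in a maximal $\bbR$-split torus $A$, write $g_n=k_na_nk_n'$. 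By compactness of $K$, after passing to a subsequence $k_n\to k$ and $k_n'\to k'$, while $g_n\to\infty$ forces $a_n\to\infty$ inside $A^+$. Since $\pi(k_n')\xi\to\pi(k')\xi=:\xi'$ and $\pi(k_n^{-1})\eta\to\pi(k^{-1})\eta=:\eta'$ strongly, and $\pi(a_n)$ is a contraction, the problem reduces to deriving a contradiction from $\langle\pi(a_n)\xi',\eta'\rangle\to c\neq0$ with $a_n\to\infty$ in $A^+$.

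Next I would pass to a limit of the operators $\pi(a_n)$. These lie in the unit ball of the bounded operators on $\mathcal H$, which is compact in the weak operator topology, so after a further subsequence $\pi(a_n)\rightharpoonup T$ with $\|T\|\le1$; the surviving coefficient $\langle T\xi',\eta'\rangle=c\neq0$ guarantees $T\neq0$. The key is to identify the range of $T$. Let $U^+$ be the horospherical subgroup contracted by the sequence, that is, the group generated by the root subgroups $U_\alpha$ with $\alpha(\log a_n)\to+\infty$, so that $a_n^{-1}ua_n\to e$ for every $u\in U^+$. This is exactly the situation underlying Lemma~\ref{lem:maut}: writing $\pi(u)\pi(a_n)=\pi(a_n)\pi(a_n^{-1}ua_n)$, using that $\pi(a_n^{-1}ua_n)\to\mathrm{id}$ strongly and that $\pi(a_n)$ is uniformly bounded, one gets $\pi(u)T=T$. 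Hence $\overline{\mathrm{Range}(T)}\subseteq\mathcal{H}^{U^+}$, the space of $U^+$-fixed vectors; dually, since $T^*$ is the weak limit of $\pi(a_n^{-1})$, whose contracted group is the opposite horospherical subgroup $U^-$, one gets $\overline{\mathrm{Range}(T^*)}\subseteq\mathcal{H}^{U^-}$.

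To complete the argument I would promote this partial invariance to a genuine $G$-invariant vector and invoke simplicity. The subspace $\mathcal{H}^{U^+}$ is invariant under the normalizer of $U^+$, and in particular under $A$ (which normalizes $U^+$). Using that the weak limit points of the net $\{\pi(a_t)\}_{t\to\infty}$ preserve $\mathcal{H}^{U^+}$, one isolates a nonzero vector $\zeta\in\mathcal{H}^{U^+}$ that is also fixed by $A$. Now I apply the vector form of Lemma~\ref{lem:maut} a second time: because $a_t$ contracts the opposite group $U^-$ (that is, $a_tva_t^{-1}\to e$ for $v\in U^-$ as $t\to+\infty$) and $\pi(a_t)\zeta=\zeta$, the lemma yields $\pi(v)\zeta=\zeta$ for all $v\in U^-$. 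Thus $\zeta$ is fixed by $U^+$, by $U^-$, and by $A$; since for a simple $G$ one has $\langle U^+,U^-,A\rangle=G$ (the structural input where simplicity enters), we conclude $\zeta\in\mathcal{H}^G=\{0\}$, contradicting $\zeta\neq0$.

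The main obstacle I expect lies in this last step, and it has two faces. First, the weak limit $T$ exists only along a subnet, so the $A$-invariance of a vector in $\mathcal{H}^{U^+}$ is \emph{not} automatic; one must control the behaviour of the unitary $A$-action on $\mathcal{H}^{U^+}$ (in effect, locate mass of the spectral decomposition at the trivial character), and this is the delicate technical point. Second, and more seriously, when $a_n\to\infty$ along a \emph{wall} of $A^+$ (a singular sequence) only the roots in a proper subset contract, so $U^\pm$ are merely the unipotent radicals of opposite proper parabolics and $\langle U^+,U^-\rangle$ generates only the semisimple part of a proper Levi subgroup $L$, rather than all of $G$. Handling this requires an induction on the real rank of $G$, the natural inductive statement being the semisimple version of the theorem (matrix coefficients vanish provided the restriction to every simple factor has no nonzero invariant vectors), so that the $L$-invariance extracted above can be fed back into the Mautner phenomenon and bootstrapped up to full $G$-invariance.
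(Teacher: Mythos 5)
The first half of your argument is correct and is the standard opening of the classical proof: the $KAK$ reduction, the weak operator limit $T$ with $\langle T\xi',\eta'\rangle=c\neq 0$, the identity $\pi(u)T=T$ for $u\in U^+$ (hence $\overline{\mathrm{Range}(T)}\subseteq\mathcal{H}^{U^+}$), and the dual statement for $T^*$. The genuine gap is exactly where you flag it, and as stated your proposed repair cannot work: extracting a nonzero $A$-fixed vector in $\mathcal{H}^{U^+}$ from ``weak limit points of $\{\pi(a_t)\}$'' is circular, because weak operator limits of unitaries along $a_t\to\infty$ may perfectly well be $0$ --- whether they vanish on $\mathcal{H}^{U^+}$ is an instance of the very theorem being proved, applied to the $A$-representation on $\mathcal{H}^{U^+}$, which a priori can have purely continuous spectrum (no mass at the trivial character). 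The classical resolution does not ``isolate'' a special vector; it proves that \emph{every} $U^+$-fixed vector is $G$-fixed (the strong Mautner phenomenon, due to Moore). For $\mathrm{SL}_2$: if $\zeta$ is $U^+$-fixed, then $g\mapsto\langle\pi(g)\zeta,\zeta\rangle$ is bi-$U^+$-invariant, and the identity $u_+(t)\,u_-(s)\,u_+\bigl(-t/(1+ts)\bigr)=\begin{pmatrix}1+ts & 0\\ s & (1+ts)^{-1}\end{pmatrix}$ together with continuity at $s=0$ forces $\langle\pi(a)\zeta,\zeta\rangle=\|\zeta\|^2$ for all diagonal $a$, hence $\pi(a)\zeta=\zeta$, and then Lemma~\ref{lem:maut} gives $U^-$-invariance; for general simple $G$ one applies this inside the $\mathrm{SL}_2$ attached to a single contracted root and propagates invariance along the irreducible root system --- this is where simplicity actually enters. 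Your second, ``more serious'' worry rests on a false structural claim: for $G$ simple, the unipotent radicals $U^\pm$ of two opposite proper parabolics generate a subgroup normalized by $U^\pm$ and by the Levi $L$, hence normal in $G$, hence dense in $G$ --- not merely the semisimple part of $L$. So the rank induction you envision is unnecessary; but the invariance-promotion step you postpone is the real mathematical content of the theorem, and your sketch does not supply it.

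For contrast, the paper does not run this operator-limit argument at all: it obtains the statement as a special case of Corollary~\ref{reflexive} (Howe--Moore for reflexive Banach modules), by applying the main Theorem~\ref{mainthm} to the $G$-action on $X=\overline{\conv(Gf)}^{w^*}\setminus\{0\}\subseteq V^*$, equipped with the norm uniform structure and the weak$^*$ topology; universal closedness of the action forces $\overline{Gf}^{w^*}=Gf\cup\{0\}$, and properness then gives $gf\to 0$ as $g\to\infty$. Inside the proof of Theorem~\ref{mainthm}, precisely your two difficulties are handled by different devices: (i) the zig-zag Lemma~\ref{difference}, which replaces the (impossible) search for spectral mass at the trivial character by manufacturing a net $a_{\beta'}\to\infty$ with $a_{\beta'}x\to x$, so that the Mautner Lemma~\ref{mautner} applies directly; and (ii) the qss axiom, which demands, for \emph{every} net $a_\alpha\to\infty$ --- singular directions included --- a subnet whose groups $U^+,U^-,U^0$ generate a dense subgroup, so no case distinction or induction on rank is ever needed.
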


Zimmer's proof relies on a theorem proved independently in \cite[Theorem~1]{Sherman} and \cite[Theorem~8]{Moore70}, describing the restriction of a unitary representation of a simple Lie group to a maximal split torus.
The approach of Howe--Moore is based on the Mautner phenomenon and works for uniformly bounded Hilbert representations and over any local field.
A main extra technical ingredient there is the reduction of the statement to a statement about $SL_2$, via the Jacobson--Marozov theorem.

It is interesting to note that the tone in \cite{HM} is apologetic. Before stating their Theorem~5.1 they write:
``Our only excuse for including this in view of the fact that more precise results 
are known (but involving some effort and machinery) is that it is simple and 
direct and already contains useful information."
The ``more precise results" they refer to are results of Cowling and Wallach regarding 
the asymptotic behavior of matrix coefficients of irreducible uniformly bounded representations,
e.g that the
matrix coefficients are in $L^p$ for some $p$,
see \cite{Cowling} for the archimedean case and \cite{BW} for the general case.

The Howe--Moore theorem was soon generalized by Veech who obtained in \cite{veech} a similar theorem applicable for any uniformly bounded representation on any reflexive Banach space.
Veech's result is again an elaboration on the Mautner phenomenon, now applied in the context of the WAP (Weakly Almost Periodic) compactification of a semisimple group.
The WAP compactification of a group is a universal semi-topological, semi-group compactification which was studied by de Leeuw and Glicksberg, following a fundamental work of Eberlein, Grothendieck and others on WAP functions on groups.
Matrix coefficients of uniformly bounded reflexive representations are WAP functions,
thus the WAP compactificqation gathers information on all reflexive representations.
It is interesting to note that by the main theorem of \cite{DFJP}, the converse is also true: every WAP function is a matrix coefficient of a reflexive representation, see \cite{Me03} (check also \cite{kaijser}).
In \cite{Cowling79} Cowling proves a similar, though slightly weaker theorem: he considers the Fourier--Stieltjes compactification of a semisimple Lie group
and proves a parallel result to Veech's.
His proof is representation theoretic.
Veech's result was reproduced and put in a conceptual context by Ellis and Nerurkar in \cite{EN}.
We find the papers \cite{veech} and \cite{EN} very appealing and we are surprised how little attention they got.
For example, when we wrote \cite{Bader-Furman-Gelander-Monod}, together with Furman and Monod, we included an appendix, based on an observation of Shalom,
which reproduced the so called Howe--Moore Theorem in the restrictive context of super-reflexive Banach spaces, see \cite[Theorem~9.1]{Bader-Furman-Gelander-Monod}.
No one ever, till this day, had brought to our attention that this result is subsumed in \cite{veech}.
After the publication of \cite{Bader-Furman-Gelander-Monod} we realized that Theorem~9.1, with the same proof essentially, could be easily generalized to all reflexive representations, with the aid of one extra ingredient: the Ryll-Nardzewski Theorem.
This already brought us to consider the WAP compactification. 
Considering the latter, we soon came across \cite{veech}.

From the late 70's to this day the Howe--Moore theorem stands as a corner stone in the analytic theory of semisimple groups.
For example, it is a key ingredient in Margulis' proof of his celebrated super-rigidity theorem.
It has numerous other applications.
Let us mention here only two very easy ones.
The first one is the easy proof of an older result due to Tits and Prasad \cite{Prasad}:
every proper open subgroup of a simple group over a local field is compact.
Indeed, this follows at once by considering the matrix coefficients of the corresponding quasi-regular representation.
The second application, given in  \cite[Theorem~7.3]{Zimmer}, could be seen as a strengthening of simplicity: for a simple group over a local field, 
every non-trivial continuous homomorphism with dense image into a locally compact group is an open bijection.
This follows by considering the regular representation of the target group.
Let us elaborate on this last application.

Already in 1966, Omori had proved that every continuous homomorphism from a semisimple Lie group (with finite center) into any first countable topological group has a closed image, \cite{omori}.
This generalizes previous results of van Est (proving a similar theorem with Lie group targets, \cite{vE}) and Goto (same with locally compact targets, \cite{Goto48} and \cite{GY}). 
See \cite{Goto73} for a general discussion.
Groups satisfying the property that every injective image in a topological group is closed are called ``absolutely closed". Their group topology is called ``minimal".
For a recent extensive survey on the subject, see \cite{DM}.
As mentioned above, Zimmer reproved Goto's result, as a corollary of the Howe--Moore theorem.
In \cite[Theorem~2.1]{HM} it is also shown that the image of a homeomorphism of a simple Lie group in the unitary group of a Hilbert space is closed. It seems that the three authors were unaware of Omori's result from 1966, \cite{omori}.
A more systematic attempt to relate the notion of coarse group topologies and the study of matrix coefficients is given in \cite{Mayer} for connected Lie groups.
It seems that for general locally compact groups there is still unexplored ground in this direction.
Here we observe that Omori's theorem (even without the first countability assumption) is an immediate application of the following theorem.

\begin{thm} \label{demo}
Let $G$ be a simple group over a local field.
Assume $G$ acts equicontinuously and without fixed points on a uniform space.
Then every orbit is closed and all the point stabilizers are compact.
\end{thm}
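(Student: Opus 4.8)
The plan is to isolate a single dynamical statement and reduce the whole theorem to it. Call it the \emph{Main Claim}: if $g_n\to\infty$ in $G$ and $g_nx\to y$ in $X$ for some $x,y\in X$, then $y$ is a $G$-fixed point. Granting this, the theorem is immediate under the no-fixed-point hypothesis. For closedness, if $y\in\overline{Gx}$ pick $g_n$ with $g_nx\to y$; the sequence $(g_n)$ cannot escape to infinity, for otherwise $y$ would be fixed against our assumption, so a subsequence converges to some $g\in G$ and $y=gx\in Gx$. For compactness, a non-compact $\Stab(x)$ would contain a sequence $h_n\to\infty$ with $h_nx=x\to x$, and the Main Claim would force $x$ to be fixed, again a contradiction.

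First I would record that equicontinuity lets us treat the action as isometric: the $G$-invariant entourages $\{(x,y):(gx,gy)\in V\ \forall g\in G\}$ form, by equicontinuity, a fundamental system for the uniformity, so the latter is generated by a family of $G$-invariant pseudometrics $\{d_i\}$, with respect to each of which $G$ acts by isometries. Lemma~\ref{lem:maut} then applies to each $d_i$ (its one-line proof is valid for pseudometrics), and $uw=w$ as soon as $d_i(uw,w)=0$ for all $i$. To prove the Main Claim I would use the Cartan ($KA^+K$) decomposition to write $g_n=k_na_nl_n$, pass to a subsequence so that $k_n\to k$ and $l_n\to l$ in the maximal compact $K$ while $a_n\to\infty$ in $A^+$ with $\log a_n/\|\log a_n\|\to v$ in the closed chamber. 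Continuity and isometry give $a_nz\to w$, where $z:=lx$ and $w:=k^{-1}y$; since the fixed-point set is $G$-invariant it suffices to show $w$ is $G$-fixed. Let $\Psi=\{\alpha>0:\alpha(v)>0\}$, the set of roots of the unipotent radical $N$ of the proper parabolic attached to $v$, with opposite $N^-$; it is non-empty because $v\neq0$ lies in the closed chamber, and every $\alpha\in\Psi$ satisfies $\alpha(a_n)\to\infty$.

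The heart of the argument is to upgrade the convergence $a_nz\to w$, whose source and target differ, into genuine recurrence at the single point $w$. Setting $c_n:=a_na_{m(n)}^{-1}\in A$ with $m(n)\to\infty$ chosen slowly, the bound $d_i(c_nw,w)\le d_i(w,a_{m(n)}z)+d_i(a_nz,w)$ shows $c_nw\to w$, and a diagonal choice of $m(n)$ also guarantees $\alpha(c_n)\to\infty$ for every $\alpha\in\Psi$. Recurrence at one point is now self-improving: since the $c_n$ are isometries, $d_i(c_n^{-1}w,w)=d_i(w,c_nw)\to0$, so $c_n^{-1}w\to w$ as well. Applying Lemma~\ref{lem:maut} to $c_n$, whose conjugation contracts $U_\alpha$ for $\alpha\in\Psi$, shows $w$ is fixed by $N$; applying it to $c_n^{-1}$, which contracts $U_{-\alpha}$, shows $w$ is fixed by $N^-$. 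Finally, as $G$ is simple, the subgroup $\langle N,N^-\rangle$ is normalized by $N$, $N^-$ and the Levi factor, hence by all of $G$, and is non-trivial; therefore $\langle N,N^-\rangle=G$ and $w$, and so $y$, is $G$-fixed, completing the Main Claim.

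The hard part will be exactly this last creative step. The raw input from non-properness only produces one-sided invariance at mismatched points: from $a_nz\to w$ one directly gets $w$ fixed by $N$, while the companion convergence $a_n^{-1}w\to z$ only fixes the \emph{other} point $z$ under $N^-$. The entire argument turns on manufacturing a single recurrent point whose forward and backward $A$-translates both return to it, so that the Mautner phenomenon can pin down invariance under two opposite horospherical subgroups simultaneously at the same point. The remaining delicate point is the behaviour at the walls of the chamber — ensuring $\Psi\neq\emptyset$ and that $\langle N,N^-\rangle=G$ even when $v$ is non-regular — but this is resolved cleanly by the simplicity of $G$.
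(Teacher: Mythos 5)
Your proof follows exactly the skeleton of the paper's own argument: Cartan decomposition, the Mautner phenomenon (Lemma~\ref{lem:maut}), the zig-zag construction $c_n=a_na_{m(n)}^{-1}$ manufacturing two-sided recurrence at a single point, and generation by opposite horospherical subgroups --- these are literally the four ingredients the paper lists after its toy case (Theorem~\ref{thm:baby}), and your root-theoretic discussion of $\Psi$, $N$, $N^-$ and the walls of the chamber is precisely the ``standard root space decomposition argument'' by which the paper verifies its qss axioms for semisimple groups. The pseudometric reformulation of equicontinuity is also fine. The genuine gap is that you run the entire argument with \emph{sequences}, while the theorem is about an arbitrary uniform space, which need not be first countable. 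Your very first reduction --- ``if $y\in\overline{Gx}$ pick $g_n$ with $g_nx\to y$'' --- is unavailable in general: when the uniformity requires uncountably many pseudometrics, the closure of an orbit is detected only by nets, and a point of $\overline{Gx}$ need not be a sequential limit of orbit points. This is not a pedantic complaint: the paper's headline applications of Theorem~\ref{demo} (e.g.\ Theorem~\ref{thm:image}, closedness of the image of $G$ in an \emph{arbitrary} topological group with its left uniform structure) are exactly the non-first-countable cases, and the sequential version of the statement essentially only recovers Omori's older theorem \cite{omori}, which the paper is explicitly trying to surpass. (Your stabilizer-compactness half, by contrast, survives: there the recurrence $h_nx=x$ is exact and $G$ itself is second countable, so sequences suffice.)

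Upgrading your Main Claim to nets is where hidden work lies, because your zig-zag step is built on choosing $m(n)\to\infty$ ``slowly'' together with a diagonal argument, and an arbitrary directed set cannot be re-indexed by a slowly growing function --- there is no notion of ``slowly'' for nets. The paper resolves exactly this with Lemma~\ref{subnetmajoration} and the Zig-Zag Lemma~\ref{difference}: given $g_\alpha\to\infty$ with $g_\alpha x\to y$, it builds over the product directed set $(\alpha)\times\mathcal{C}$ (with $\mathcal{C}$ the compacta of $G$) two nets $n,n'$ with $n(\beta)x\to y$, $n'(\beta)x\to y$ and $n(\beta)^{-1}n'(\beta)\to\infty$. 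Moreover, after this construction one has lost all control of the chamber direction of $n(\beta)^{-1}n'(\beta)$, so your trick of preserving $\alpha(c_n)\to\infty$ for $\alpha\in\Psi$ during the zig-zag does not carry over; instead one must pass to a fresh subnet of the new net and re-run the root space decomposition from scratch --- which is precisely why the paper's second qss axiom is quantified as ``for every net $a_\alpha\to\infty$ there exists a subnet such that\dots''. In short: your argument is the paper's argument and is complete for metrizable $X$ (and for the stabilizer statement), but as written it does not prove the theorem in the stated generality; closing the gap requires the net machinery of \S\ref{preli}.
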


Indeed, given a continuous homomorphism into a topological group, $G\to H$, endowing $H$ with its left uniform structure and consider the left action of $G$ on $H$ we get that the orbit of $e\in H$, that is the image of $G$, is closed.
We get the following extension of Omori's theorem.

\begin{cor}
A simple group over a local field is absolutely closed.
\end{cor}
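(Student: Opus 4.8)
The plan is to deduce this immediately from Theorem~\ref{demo}, following the recipe indicated right after that theorem's statement. Recall that $G$ being \emph{absolutely closed} means precisely that for every injective continuous homomorphism $\varphi\colon G\to H$ into a topological group $H$, the image $\varphi(G)$ is closed in $H$. So I fix such a $\varphi$ and aim to realize $\varphi(G)$ as an orbit of an equicontinuous fixed-point-free action, to which Theorem~\ref{demo} then applies.

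First I would equip $H$ with its left uniform structure, with basic entourages $\{(x,y)\in H\times H : x^{-1}y\in V\}$ for $V$ a neighborhood of the identity in $H$, and let $G$ act on $H$ by left translation through $\varphi$, that is, $g\cdot h=\varphi(g)h$. This action is continuous, since $\varphi$ and the multiplication of $H$ are. The crucial point is that left translation by any element of $H$ preserves the left uniform structure: if $(x,y)$ lies in the entourage above then so does $(\varphi(g)x,\varphi(g)y)$, because $(\varphi(g)x)^{-1}(\varphi(g)y)=x^{-1}y$. Hence every map $h\mapsto \varphi(g)h$ is a uniform automorphism of $H$, and the family indexed by $g\in G$ is in particular equicontinuous.

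Next I would verify that the action has no fixed points. A point $h\in H$ fixed by all of $G$ would satisfy $\varphi(g)h=h$, hence $\varphi(g)=e$, for every $g\in G$; since $G$ is nontrivial this contradicts the injectivity of $\varphi$. Thus the hypotheses of Theorem~\ref{demo} are met, and every orbit of the action is closed. Applying this to the orbit of the identity gives that $G\cdot e=\{\varphi(g) : g\in G\}=\varphi(G)$ is closed in $H$. As $\varphi$ and $H$ were arbitrary, $G$ is absolutely closed.

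Since Theorem~\ref{demo} is taken as given, there is no genuine obstacle: the whole argument is the translation of its abstract hypotheses into the concrete situation of $H$. The only step demanding a little care is the choice of the \emph{left} (rather than right or two-sided) uniform structure, which is exactly what makes left translation by $\varphi(g)$ a uniform automorphism and hence renders the action equicontinuous.
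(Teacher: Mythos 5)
Your proof is correct and takes essentially the same route as the paper: endow $H$ with its left uniform structure, let $G$ act by left translations through $\varphi$ (an action which is equicontinuous because the basic entourages $\{(x,y): x^{-1}y\in V\}$ are invariant under left translation, and fixed-point-free by injectivity of $\varphi$), then apply Theorem~\ref{demo} to conclude that the orbit of the identity, namely $\varphi(G)$, is closed. The only difference is that you spell out the verifications (equicontinuity, absence of fixed points) that the paper leaves implicit in its one-line argument.
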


Our main contribution in this paper is the formulation of Theorem~\ref{mainthm}, which is a simultaneous generalization of Veech's theorem and Theorem~\ref{demo} above.
Theorem~\ref{mainthm} discusses an action of a group on a space with two compatible structures: a uniform structure and a compatible (typically weaker) topology. The reader familiar with the classical Howe--Moore theorem should have in mind the two compatible structures on the unit ball of a Banach space: the norm and the weak topologies.
We will not formulate Theorem~\ref{mainthm} in this introduction, due to its technical nature. 
Instead we choose to demonstrate it by proving a toy case, which already contains most of the ideas of the proof.
For a generalization of the next theorem, see Theorem~\ref{thm:iso}.

\begin{thm}[toy case]\label{thm:baby}
Assume the group $G=\SL_2(\bbR)$ is acting continuously by isometries on a metric space $X$. Let $x_0\in X$ be a point. Then either $x_0$ is a global fixed point or its stabilizer group is compact.
\end{thm}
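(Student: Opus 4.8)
The plan is to prove the contrapositive: assuming the stabilizer $H:=\Stab(x_0)$ is \emph{not} compact, I will show that $x_0$ is a global fixed point. Observe first that $H$ is closed, since $g\mapsto d(gx_0,x_0)$ is continuous and $H$ is its zero set; a closed noncompact subgroup of $\SL_2(\bbR)$ therefore contains a sequence $h_n\to\infty$. The strategy has two stages: first use the Cartan decomposition together with the Mautner lemma to produce a full one-parameter \emph{unipotent} subgroup fixing $x_0$, and then upgrade ``fixed by one unipotent'' to ``fixed by all of $G$''. Throughout write $K=\SO(2)$, $a_\tau=\diag(e^\tau,e^{-\tau})$, $w=\left(\begin{smallmatrix}0&-1\\1&0\end{smallmatrix}\right)$, and let $U^+,U^-$ be the upper and lower unipotent subgroups, with $u^-_s=\left(\begin{smallmatrix}1&0\\ s&1\end{smallmatrix}\right)\in U^-$.

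For the first stage I would write $h_n=k_na_nl_n$ with $k_n,l_n\in K$ and $a_n=\diag(\lambda_n,\lambda_n^{-1})$, $\lambda_n\ge 1$; since $h_n\to\infty$ we have $\lambda_n\to\infty$, and since $K$ is compact we may pass to a subsequence with $k_n\to k$ and $l_n\to l$. From $h_nx_0=x_0$ we get $a_n(l_nx_0)=k_n^{-1}x_0$; as $l_nx_0\to lx_0=:y_0$ and $k_n^{-1}x_0\to k^{-1}x_0=:z_0$, the isometry property gives $a_ny_0\to z_0$. For any fixed $u\in U^+$ we have $a_n^{-1}ua_n\to e$, so the one-line Mautner computation (exactly as in Lemma \ref{lem:maut}, but now with the limit point $z_0$ in place of the base point) yields
\[ d(uz_0,z_0)=\lim_n d(u\,a_ny_0,\,a_ny_0)=\lim_n d\big((a_n^{-1}ua_n)y_0,\,y_0\big)=d(y_0,y_0)=0. \]
Hence $U^+$ fixes $z_0$; equivalently the maximal unipotent subgroup $kU^+k^{-1}$ fixes $x_0$.

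The second stage is the crux, and I expect it to be the main obstacle. One would like to run the symmetric argument with $h_n^{-1}$ to see that $l^{-1}U^-l$ also fixes $x_0$ and then invoke that two \emph{distinct} maximal unipotent subgroups generate $\SL_2(\bbR)$; the trouble is that these two unipotent subgroups may coincide (e.g.\ when $H$ is itself a single unipotent subgroup), so this does not by itself close the argument. The robust remedy is a direct ``sliding'' argument showing that once $U^+$ fixes a point $p:=z_0$, the opposite unipotent must fix it as well. Put $q:=wp$, so that $U^-=wU^+w^{-1}$ fixes $q$. Using the Bruhat identity $\left(\begin{smallmatrix}1&0\\s&1\end{smallmatrix}\right)=\left(\begin{smallmatrix}1&1/s\\0&1\end{smallmatrix}\right)\,a_{-\ln s}\,w\,\left(\begin{smallmatrix}1&1/s\\0&1\end{smallmatrix}\right)$ for $s>0$ and that $U^+$ fixes $p$, one computes $d(u^-_s p,p)=d(a_{-\ln s}q,p)$; letting $s\to 0^+$ (so $u^-_s\to e$ and $-\ln s\to+\infty$) gives $a_\tau q\to p$ as $\tau\to+\infty$.

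Finally, for each $c\in\bbR$ the relation $u^-_c a_\tau=a_\tau u^-_{ce^{2\tau}}$ together with the fact that $U^-$ fixes $q$ gives $u^-_c(a_\tau q)=a_\tau q$ for every $\tau$; letting $\tau\to+\infty$ and using continuity of the isometry $u^-_c$ and $a_\tau q\to p$ forces $u^-_c p=p$. Thus $U^-$ also fixes $p=z_0$, so $z_0$ is fixed by $\langle U^+,U^-\rangle=\SL_2(\bbR)$, i.e.\ $z_0$ is a global fixed point; since $x_0=kz_0$ and $z_0$ is fixed by all of $G$, we get $x_0=z_0$, a global fixed point, contradicting our assumption and establishing the contrapositive. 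The one genuinely delicate point is this sliding step, which is the elementary metric shadow of the fixed-point input (Ryll--Nardzewski) that the general theorem requires.
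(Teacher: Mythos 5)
Your proof is correct, but its second half takes a genuinely different route from the paper's. Stage 1 coincides in spirit with the paper's opening (Cartan decomposition, passing to subsequences so $k_n\to k$, $l_n\to l$, and extracting the limit relation $a_ny_0\to z_0$), though the paper postpones Mautner rather than applying it to $(a_n)$ directly. Where you diverge is the crux you correctly identified: the paper never tries to run the symmetric argument on $h_n^{-1}$ (which, as you note, can fail --- e.g.\ when $H=U^+$ the two maximal unipotents produced coincide). Instead it uses a zig-zag trick: choosing $m(n)$ growing fast and setting $b_n=a_{m(n)}a_n^{-1}$, one gets a sequence in the diagonal with $b_n^{\pm1}x_2\to x_2$, so Mautner applies in \emph{both} directions at once, yielding $U^+x_2=U^-x_2=x_2$ simultaneously. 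You, by contrast, apply Mautner only once to get $U^+p=p$, and then ``slide'' via the explicit Bruhat identity $u^-_s=u_+(1/s)\,a_{-\ln s}\,w\,u_+(1/s)$ and the commutation relation $u^-_c a_\tau=a_\tau u^-_{ce^{2\tau}}$ to force $U^-p=p$; this is essentially the classical $\SL_2$ Mautner argument (a $U^+$-fixed vector is $G$-fixed) transported from unitary representations to isometric actions. Each approach buys something: yours is self-contained, needs only one application of Mautner and no resequencing trick, but it leans on the Weyl element and the explicit Bruhat cell structure of $\SL_2(\bbR)$; the paper's zig-zag (formalized as its Lemma~\ref{difference}) is soft and purely dynamical, which is exactly what lets it be axiomatized into the quasi-semi-simple framework and the general Theorem~\ref{mainthm}, where no Bruhat decomposition is assumed. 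Two cosmetic points: there is no actual ``contradiction'' at the end --- you have simply proved the contrapositive directly (and indeed the two alternatives are not exclusive, since a global fixed point has noncompact stabilizer); and your closing remark tying the sliding step to Ryll--Nardzewski is a mischaracterization --- in the paper that theorem enters only in the reflexive Banach module section, while the role played by your sliding step is played there by the zig-zag lemma.
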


Note that every metric space has a canonical uniform structure, and every isometric action is equicontinuous,
thus Theorem~\ref{thm:baby} is an immediate corollary of Theorem~\ref{demo}.
To the best of our knowledge, even this very specific result has not been formulated in the past literature.
Here is a complete proof:

\begin{proof}
Suppose that the stabilizer group $G_{x_0}$ is not compact and let $g_n\in G_{x_0}$ be a sequence tending to infinity. Write $g_n=k_n a_n k'_n$ with $k_n,k_n'\in \SO(2)$ and 
\[ a_n=
 \left( \begin{array}{cc}
\ga_n & 0 \\
0 & \ga_n^{-1} 
\end{array} \right)
\]
with $\ga_n\to \infty$.
Up to replacing $g_n$ by a subsequance we may suppose that $k_n\to k$ and $k_n'\to k'$. Set $x_1=k'\cdot x_0$ and $x_2=k^{-1}\cdot x_0$.

Let $m:\BN\to\BN$ be a function tending sufficiently fast to $\infty$ so that $\gb_n:=\frac{\ga_{m(n)}}{\ga_n}\to\infty$, and set $b_n=a_{m(n)}a_n^{-1}$, that is
\[ b_n=
 \left( \begin{array}{cc}
\gb_n & 0 \\
0 & \gb_n^{-1} 
\end{array} \right).
\]
Since the action $G\act X$ is continuous by isometries, and since $a_n\cdot x_1\to x_2$ and $a_n^{-1}\cdot x_2\to x_1$, we see that $b_n\cdot x_2\to x_2$, as well as $b_n^{-1}\cdot x_2\to x_2$.
Note that for
\[ u_+(t)=
 \left( \begin{array}{cc}
1 & t \\
0 & 1 
\end{array} \right)~ \text{and}~
u_-(t)=
 \left( \begin{array}{cc}
1 & 0 \\
t & 1 
\end{array} \right)
\]
we have $\lim_{n\to\infty} b_n^{-1}u_+(t)b_n=\lim_{n\to\infty} u_+({t\over \gb_n^2})=e$, thus by Lemma~\ref{lem:maut}, $u_+(t)x_2=x_2$.
Similarly, we have that $\lim_{n\to\infty} b_nu_-(t)b_n^{-1}=e$, hence $u_-(t)x_2=x_2$.
Since $G$ is generated by $u_+(t)$ and $u_-(t)$ we deduce that $G=G_{x_2}$. Finally, it follows that $x_2=x_0$, hence $x_0$ is a global fixed point.
\end{proof}

In the above proof, note the role of the sequence $(b_n)$ which zig-zags between the accumulation points $x_1$ and $x_2$.
This is the only novel ingredient in the following short list which summarizes the main ingredients appearing in the above proof:
\begin{itemize}
\item the $KAK$ decomposition,
\item the Mautner phenomenon,
\item a zig-zaging argument, and
\item the generation of $G$ by opposite unipotents.
\end{itemize}
In the proof of Theorem~\ref{mainthm}, the zig-zaging argument will come about by the use of Lemma~\ref{difference}.

Finally note that for the proof we don't need the precise structure of the group $G$, only the properties allowing us to use the ingredients above.
This observation was used before by several authors, generalizing the Howe--Moore theorem to various non-algebraic groups, notably groups acting on trees,
see \cite{LM}.
We will prove our main theorems for groups having these appropriate properties, which we call ``quasi-semisimple", or shortly qss groups.
A similar axiomatic approach is taken in the recent preprint \cite{ciob}.




\subsection{The structure of the paper}
The first half of the paper is devoted to the formulation and proof of the main Theorem~\ref{mainthm} about equicontinuous actions of semisimple (and, more generally, qss) groups, including a presentation of the basic notions
and necessary background. The second half (\S\ref{sec:closed-image} and further) is dedicated to various applications of Theorem \ref{mainthm}.
The very last section deals with some further generalizations of our axiomatic scheme.

\S\ref{preli} summarizes some well-known background material.
In \S\ref{qss} we present the class of quasi-semisimple groups, the class to which we apply our main theorems
proven in \S\ref{sec:main}.
In \S\ref{sec:topologies} we prove the inexistence of weak topologies on qss groups improving old results of Omori and Goto.
In \S\ref{sec:ME} we establish new results concerning metric ergodicity of analytic semisimple groups and their lattices.
In \S\ref{sec:monoid} we review the theory of monoid compactifications of a group
and reprove (and slightly extend) Veech's theorem.
We apply these results in \S\ref{sec:mixing} in order to gain information about Banach representations.
In \S\ref{sec:Banach} we reprove the results of \S\ref{sec:mixing} directly from Theorem~\ref{mainthm},
for the benefit of the reader who wishes to avoid the abstract setting of \S\ref{sec:monoid}. 
In \S\ref{sec:hqss} we discuss the extension of our results to the class of hereditary quasi-semisimple groups.

\subsection{Acknowledgment}

We thank Amos Nevo for an enlightening conversation regarding the history and Gil Goffer and the excellent referee for many remarks and suggestions for improvements.
We thank Michael Megrelishvili for some helpful suggestions and references.
We are grateful to Meny Shlossberg for spotting a gap in the proof of Theorem~\ref{thm:image} in an earlier version of this paper.

%
%
%

%
%

\section{Preliminaries} \label{preli}

\subsection{On nets convergence}

Recall that a net in a topological space is a map to the space from a directed set, where a directed set is a pre-ordered set in which every two elements have an upper bound.
Typically we denote a directed set by the symbol $(\alpha)$ where $\alpha$ denotes a generic element in the directed set, and for a net in the topological space $X$ we use symbols as $(x_\alpha)$, representing the map $\alpha \mapsto x_\alpha$.

The net $x_\alpha$ converges to $x$, to be denoted $x_\alpha \to x$, if for every neighborhood $U$ of $x$ there exists $\alpha_0\in (\alpha)$ such that for every $\alpha \geq \alpha_0$, $x_\alpha \in U$.

A net $(x_\beta)$ is said to be a subnet of the net $(x_\alpha)$ if it is obtained as the composition of an order preserving cofinal map $(\beta)\to (\alpha)$
with the map $\alpha \mapsto x_\alpha$.
It is well known and easy to check that a net converges if and only if all of its subnets converge and to the same point.
Less well known is the fact that every net which majorizes a subnet of a converging net converges as well.

\begin{lem} \label{subnetmajoration}
Let $n:(\alpha)\to X$ be a net converging to $x$. Let $f:(\beta)\to (\alpha)$ be an order preserving cofinal map.
Let $f':(\beta)\to (\alpha)$ be another map, satisfying for every $\gb$, $f'(\gb)\geq f(\gb)$.
Then the net $n\circ f'$ converges to $x$.
\end{lem}

\begin{proof}
Fixing a neighborhood $U$ of $x$ we need to show that 
there exists $\beta_0 \in (\beta)$ such that for every $\beta \geq \beta_0$, $x_{f'(\beta)} \in U$.
Indeed, by the convergence of the net $x_\alpha$ there exists $\alpha_0\in (\alpha)$ such that for every $\alpha \geq \alpha_0$, $x_\alpha \in U$,
and by the cofinality of $f$, there exists $\beta_0\in (\beta)$ such that $f(\beta_0)\geq \alpha_0$.
Then for every $\beta \geq \beta_0$, $f'(\beta)\geq f(\beta) \geq \alpha_0$ implies $x_{f'(\beta)}\in U$.
\end{proof}

In a locally compact space $X$, a net is said to converge to infinity if for every compact subset $K$ there exists  $\alpha_0$ such that for every $\alpha \geq \alpha_0$, $x_\alpha \notin K$.
The following technical lemma will be of use.

\begin{lem}[Zig-Zag lemma] \label{difference}
Let $G$ be a locally compact group acting on a topological space $X$. 
Let $g_\alpha$ be a net in $G$ converging to infinity and assume that for some $x,y\in X$, the net $(g_\alpha x)$ converges to $y$ in $X$.
Then there exists a directed set $(\beta)$ and two nets $n,n':(\beta)\to G$ satisfying
$n(\beta)x\to y$ and $n'(\beta)x \to y$ in $X$ and $n(\beta)^{-1}n'(\beta) \to \infty$ in $G$.
\end{lem}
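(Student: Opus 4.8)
The plan is to build the required directed set out of pairs consisting of a neighborhood of $y$ together with a compact subset of $G$, selecting the two nets index-by-index from the tail of the original net $(g_\alpha)$. Concretely, let $(\beta)$ be the set of pairs $\beta=(U,K)$ where $U$ is a neighborhood of $y$ in $X$ and $K$ is a compact subset of $G$, ordered by declaring $(U_1,K_1)\leq (U_2,K_2)$ precisely when $U_2\subseteq U_1$ and $K_1\subseteq K_2$. This is directed, since $(U_1\cap U_2,\,K_1\cup K_2)$ dominates both $(U_1,K_1)$ and $(U_2,K_2)$. The role of $U$ is to force $n(\beta)x$ and $n'(\beta)x$ towards $y$, while the role of $K$ is to push the ``difference'' $n(\beta)^{-1}n'(\beta)$ to infinity.

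The heart of the matter is the following selection claim, which is the abstract form of the zig-zag: for every neighborhood $U$ of $y$ and every compact $K\subseteq G$ there exist indices $\alpha,\alpha'$ of the original net with
\[ g_\alpha x\in U,\qquad g_{\alpha'}x\in U,\qquad\text{and}\qquad g_\alpha^{-1}g_{\alpha'}\notin K. \]
Granting this, for each $\beta=(U,K)$ I choose such a pair and set $n(\beta)=g_\alpha$ and $n'(\beta)=g_{\alpha'}$. The three required convergences are then immediate from the order on $(\beta)$. Given a neighborhood $U_0$ of $y$, every $\beta=(U,K)\geq(U_0,\{e\})$ satisfies $U\subseteq U_0$, whence $n(\beta)x,\,n'(\beta)x\in U\subseteq U_0$, so $n(\beta)x\to y$ and $n'(\beta)x\to y$. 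Given a compact $K_0\subseteq G$, every $\beta=(U,K)\geq(X,K_0)$ satisfies $K_0\subseteq K$, whence $n(\beta)^{-1}n'(\beta)\notin K$ and therefore $n(\beta)^{-1}n'(\beta)\notin K_0$; thus $n(\beta)^{-1}n'(\beta)\to\infty$ in the sense defined above. (Here $X$ and $\{e\}$ serve as the trivial choices of a neighborhood of $y$ and a compact set, respectively.)

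The only real work, and the step I expect to be the main obstacle, is the selection claim. I would argue by contradiction: suppose that for some $U$ and $K$, every pair $\alpha,\alpha'$ with $g_\alpha x,g_{\alpha'}x\in U$ satisfies $g_\alpha^{-1}g_{\alpha'}\in K$. Since $g_\alpha x\to y$, there is $\alpha_1$ with $g_\alpha x\in U$ for all $\alpha\geq\alpha_1$; fix one such index $\alpha_2\geq\alpha_1$. Applying the supposition to the pairs $(\alpha_2,\alpha)$ for $\alpha\geq\alpha_1$ yields $g_{\alpha_2}^{-1}g_\alpha\in K$, that is $g_\alpha\in g_{\alpha_2}K$, for all $\alpha\geq\alpha_1$. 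But $g_{\alpha_2}K$ is compact, being the image of the compact set $K$ under the homeomorphism $h\mapsto g_{\alpha_2}h$ of the topological group $G$, so a tail of the net $(g_\alpha)$ lies in a single compact set, contradicting $g_\alpha\to\infty$. This proves the claim and completes the construction. Note that local compactness of $G$ is used only to give meaning to the hypothesis and conclusion ``$\to\infty$''; the contradiction itself needs only that translates of compact sets are compact.
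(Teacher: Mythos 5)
Your proof is correct, and it takes a genuinely different route from the paper's. The paper works with the directed set $(\beta)=(\alpha)\times\mathcal{C}$, where $\mathcal{C}$ is the family of compact subsets of $G$: the first net is the honest subnet $n(\beta)=g_{f(\beta)}$ along the projection $f:(\alpha)\times\mathcal{C}\to(\alpha)$, the second is $n'(\beta)=g_{f'(\beta)}$ where $f'(\alpha_0,K)\geq \alpha_0$ is chosen so that $g_{f'(\alpha_0,K)}\notin g_{\alpha_0}K$, and the convergence $n'(\beta)x\to y$ is delegated to Lemma~\ref{subnetmajoration} rather than checked by hand. You instead discard the original index set entirely and re-index by pairs (neighborhood of $y$, compact subset of $G$), choosing both group elements from the tail via your selection claim and verifying all three limits directly from the definitions. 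The two arguments share the same core observation --- a tail of $(g_\alpha)$ cannot lie in a single translate $g_{\alpha_0}K$ of a compact set, since $g_\alpha\to\infty$, so two $K$-separated indices can always be found in the tail --- but they package it differently. Your version is self-contained: it does not need the subnet-majoration machinery that the paper sets up precisely to make this lemma a two-line consequence, at the cost of an extra layer of choice and a small argument by contradiction. The paper's version gets $n(\beta)x\to y$ for free (it is a subnet of a convergent net) and reuses a lemma of independent utility. Your closing remark is also accurate for both proofs: local compactness of $G$ enters only through the meaning of convergence to infinity and the fact that translates of compact sets are compact.
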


\begin{proof}
We let $\mathcal{C}$ be the directed set of compact subsets of $G$, ordered by reversed inclusion, and set $(\beta)=(\alpha)\times \mathcal{C}$
endowed with the product order.
We let $f:(\beta)\to (\alpha)$ be the projection on the first variable.
This is obviously an order preserving cofinal map.
For every $(\alpha_0,K)\in (\beta)$ we use the fact that the subnet $(g_\alpha)_{\alpha\geq \alpha_0}$ converges to infinity in $G$
to find an element $\alpha_1\geq \alpha_0$ satisfying $g_{\alpha_1} \notin g_{\alpha_0}K$.
We denote $\alpha_1=f'(\alpha_0,K)$.
The lemma now follows from Lemma~\ref{subnetmajoration}, setting $n(\beta)=g_{f(\beta)}$ and $n'(\beta)=g_{f'(\beta)}$.
\end{proof}

\subsection{Uniform structures and compatible topologies}

Recall that a uniform structure on a set $X$ is a symmetric filter $S$ of reflexive relations on $X$, such that for every $U\in S$ there is $U'\in S$ with $U'U'\subset U$. Here 
$$ 
 U_1U_2=\{(u_1,u_2):\exists u_3, (u_1,u_3)\in U_1,(u_3,u_2)\in U_2\}.
$$

Let  $(X,S)$ be a uniform space.

\begin{defn} 
We will say that a topology $T$ on $X$ is {\it $S$-compatible} if for every $V\in T$ and a point $y\in V$, there exists $y\in V'\in T$ and $U\in S$ such that
$UV'\subset V$, where 
$$
 UV'=\{v~|~\exists v' \in V',~(v,v')\in U \}.
$$
\end{defn}

We shall denote by $T_S$ the {\it $S$-topology} on $X$, i.e. the topology generated by the sets 
$$
 U(x):= U\{x\},~x\in X,U\in S.
$$
Obviously, we have:

\begin{exam}
The $S$-topology $T_S$ is $S$-compatible.
\end{exam}

A topological group action on a topological space $G\act (X,T)$ is said to be {\it jointly continuous} or simply {\it continuous} if the action map $G\times X\to X$ is continuous as a function of two variables.

\begin{exam} \label{action-structure}
Given an action of a topological group $G$ on a set $X$ we define the {\em action uniform structure} $S_G$ on $X$ to be the uniform structure generated by the images of the sets
$U\times X$ under the map 
$$
 G\times X \to X\times X,~(g,x)\mapsto (x,gx),
$$ 
where $U$ runs over the identity neighborhoods in $G$.
A topology $T$ on $X$ is $S_G$-compatible if and only if the action of $G$ on $(X,T)$ is continuous.
\end{exam}

A group action on a uniform space $G\act X$ is said to be {\it equicontinuous} (or sometimes {\it uniformly continuous}) if for every $U \in S$, also the set $\{(u,v)~|~\forall g\in G,~(gu,gv)\in U\}$ is in $S$.
This means that $S$ has a basis consisting of $G$-invariant uniformities.

\begin{exam} \label{uniform-group}
For a topological group $G$, setting $X=G$, the right regular action defines a uniform structure on $G$, as in Example~\ref{action-structure}.
This structure is called the {\em left} uniform structure. 
Note that the {\em left} regular action is equicontinuous with respect to that structure.
\end{exam}

\begin{lem} \label{uniformquotient}
Assume $G$ acts on $(X,S)$ uniformly. Denote by $X/G$ the space of orbits and denote by $\pi:X \to X/G$ the natural quotient map.
Then the collection $\{(\pi\times\pi)(U)~|~U\in S\}$ defines a uniform structure on $X/G$, to be denoted $\pi_*S$,
and the associated topology on $X/G$, $T_{\pi_*S}$ coincides with the quotient topology $\pi_*T_S$. 
\end{lem}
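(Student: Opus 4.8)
The plan is to proceed in two stages: first check that $\mathcal B:=\{(\pi\times\pi)(U)\mid U\in S\}$ is a filter base for a uniform structure on $X/G$ (with $\pi_*S$ the filter it generates), and then identify the induced topology with the quotient topology. Write $\ol{U}=(\pi\times\pi)(U)$. The three easy axioms follow from $\pi$ being surjective and $S$ being a symmetric filter: since $D\subset U$ we get $D_{X/G}=(\pi\times\pi)(D)\subset \ol{U}$; for $U_1,U_2\in S$ we have $\ol{U_1\cap U_2}\subset \ol{U_1}\cap\ol{U_2}$ with $U_1\cap U_2\in S$, so $\mathcal B$ is downward directed; and $(\pi\times\pi)(U^{-1})=\ol{U}^{-1}$ together with $U^{-1}\in S$ shows $\mathcal B$ is symmetric. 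The only axiom with content is the half-entourage (composition) condition, and this is exactly where I expect equicontinuity to be indispensable.

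For the composition step, given $U\in S$ I would first pick $W\in S$ with $WW\subset U$, and then, using that the action is uniform so that $S$ has a base of $G$-invariant entourages, pick a $G$-invariant $V\in S$ with $V\subset W$; thus $V$ is $G$-invariant and $VV\subset U$. I claim $\ol{V}\,\ol{V}\subset\ol{U}$. Indeed, a point of $\ol{V}\,\ol{V}$ has the form $(\pi(x_1),\pi(y_2))$ with $(x_1,y_1)\in V$, $(x_2,y_2)\in V$ and $\pi(y_1)=\pi(x_2)$, so $x_2=g y_1$ for some $g\in G$. By $G$-invariance of $V$ we may apply $g^{-1}$ to the second pair, obtaining $(y_1,g^{-1}y_2)\in V$, whence $(x_1,g^{-1}y_2)\in VV\subset U$; since $\pi(g^{-1}y_2)=\pi(y_2)$ this exhibits $(\pi(x_1),\pi(y_2))\in\ol{U}$. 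This is the heart of the matter: passing to a $G$-invariant entourage lets one absorb the mismatch $x_2=g y_1$ between orbit representatives into a single group element, which is precisely what equicontinuity provides. With this, $\pi_*S$ is a genuine uniform structure.

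For the topological statement I would show that $\pi\colon (X,T_S)\to (X/G,T_{\pi_*S})$ is a continuous open surjection, hence a quotient map, so that $T_{\pi_*S}$ coincides with the quotient topology $\pi_*T_S$. Continuity is automatic since $\pi$ is uniformly continuous by construction: $(\pi\times\pi)^{-1}(\ol{U})\supset U\in S$. Openness is where $G$-invariance re-enters: given a $T_S$-open set $O$ and a point $\pi(x)\in\pi(O)$ with $x\in O$, choose a $G$-invariant $U\in S$ with $U\{x\}\subset O$; then for any $(\pi(x),\bar y)\in\ol{U}$, written as $\pi(a)=\pi(x)$, $\pi(b)=\bar y$ with $(a,b)\in U$ and $a=gx$, the $G$-invariance gives $(x,g^{-1}b)\in U$, so $g^{-1}b\in U\{x\}\subset O$ and hence $\bar y=\pi(b)\in\pi(O)$; thus $\ol{U}\{\pi(x)\}\subset\pi(O)$, proving $\pi(O)$ is $T_{\pi_*S}$-open. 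Since a continuous open surjection is a quotient map and the topology of its target is then the quotient topology, we conclude $T_{\pi_*S}=\pi_*T_S$. I expect the main obstacle to be the composition axiom in the second paragraph (and its mirror image, the openness of $\pi$): both fail for a general quotient of a uniform space and are salvaged only by the equicontinuity hypothesis, through the ability to choose $G$-invariant entourages.
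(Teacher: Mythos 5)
Your proof is correct, and in fact the paper offers no argument at all for this lemma (its proof is ``Left to the reader''); your argument --- checking the filter/symmetry/diagonal axioms via surjectivity of $\pi$, using equicontinuity to pass to a $G$-invariant entourage $V$ with $VV\subset U$ for the composition axiom, and again for openness of $\pi$, then concluding $T_{\pi_*S}=\pi_*T_S$ from ``continuous open surjection $\Rightarrow$ quotient map'' --- is exactly the standard argument the authors intend. The only cosmetic point is the paper's convention $U\{x\}=\{v\,:\,(v,x)\in U\}$, which reverses the order of some of your pairs; this is harmless since you may always take your $G$-invariant entourage to be symmetric (replace $U$ by $U\cap U^{-1}$, which is still $G$-invariant and lies in $S$).
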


\begin{proof}
Left to the reader.
\end{proof}

\begin{lem} \label{Joint}
An equicontinuous action of a topological group is (jointly) continuous with respect to the $S$-topology if (and only if) the orbit maps are continuous.
\end{lem}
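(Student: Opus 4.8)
The equivalence splits into a trivial direction and a substantial one, and I would dispatch the trivial one first. The ``only if'' direction is immediate: if $G\act(X,T_S)$ is jointly continuous, then restricting the action map $G\times X\to X$ to a slice $G\times\{x\}$ exhibits each orbit map $g\mapsto gx$ as a composite of continuous maps, hence continuous. So the entire content is in the converse, and that is where I would spend the effort.

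For the ``if'' direction I would assume the action is equicontinuous and that every orbit map $G\to(X,T_S)$ is continuous, and prove joint continuity at an arbitrary point $(g,x)\in G\times X$. The plan is to test against the basic $T_S$-neighborhoods of the target $gx$, which are the sets $U(gx)=\{y:(y,gx)\in U\}$ with $U\in S$ (these form a neighborhood base at $gx$ because $S$ is a filter). Thus, given $U\in S$, it suffices to produce a neighborhood $N$ of $g$ in $G$ and a $T_S$-neighborhood $M$ of $x$ such that $hz\in U(gx)$ for all $h\in N$ and $z\in M$.

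The idea is to travel from $hz$ to $gx$ through the intermediate point $hx$, thereby separating the group variable from the space variable. First I would pick $U'\in S$ with $U'U'\subset U$. The decisive use of equicontinuity is that $S$ has a basis of $G$-invariant uniformities: the set $W:=\{(u,v):\forall g\in G,\ (gu,gv)\in U'\}$ lies in $S$, satisfies $W\subset U'$ (take $g=e$), and is $G$-invariant. Now I handle the two variables independently. Continuity of the orbit map at $g$ yields a neighborhood $N$ of $g$ with $(hx,gx)\in U'$ for all $h\in N$; this controls the group variable. Setting $M:=W(x)$, a $T_S$-neighborhood of $x$ since $W\supset D$, every $z\in M$ satisfies $(z,x)\in W$, and $G$-invariance of $W$ gives $(hz,hx)\in W\subset U'$ for every $h$; this controls the space variable uniformly in $h$. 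Composing, $(hz,gx)\in U'U'\subset U$, i.e. $hz\in U(gx)$, which is exactly what is needed.

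The only genuinely load-bearing step — and the sole place equicontinuity is invoked — is the passage from $(z,x)\in W$ to $(hz,hx)\in U'$ \emph{uniformly} over $h\in N$; without equicontinuity, closeness of $z$ to $x$ gives no control on $hz$ versus $hx$ as $h$ varies, and the statement genuinely fails. Everything else is routine entourage bookkeeping with $U'U'\subset U$. I would note finally that the same three ingredients (the split $U=U'U'$, the $G$-invariant $W\subset U'$, and the two convergences $x_\alpha\to x$ and $g_\alpha x\to gx$) give the net-theoretic proof verbatim, establishing $g_\alpha x_\alpha\to gx$ for any net $(g_\alpha,x_\alpha)\to(g,x)$ in $G\times X$.
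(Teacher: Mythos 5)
Your proof is correct and follows essentially the same route as the paper's: both arguments pass from $hz$ to $gx$ through the intermediate point $hx$, using a $G$-invariant entourage supplied by equicontinuity to control the space variable uniformly in the group element, orbit-map continuity to control the group variable, and the splitting $U'U'\subset U$ to compose. Your explicit construction of the invariant entourage $W$ inside $U'$ is just an unpacking of the paper's one-line assertion that a $G$-invariant $U'$ with $U'U'\subset U$ exists.
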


\begin{proof}
For any $y\in X$ and a neighborhood of the form $U(y)$ associated with a uniformity $U\in S$,
there exists a $G$-invariant uniformity $U'$ such that $U'U'\subset U$.
For any $(g,x)$ with $gx=y$, 
let $\gO\subset G$ be the pre-image of $U'(y)$ under the $x$-orbit map.
Then $\gO\times U'(x)$ is a neighborhood of $(g,x)$ in $G\times X$ whose image under the action map is contained in $U(y)$. Indeed, for $(g',x')\in \gO\times U'(x)$, $(x',x)\in U'$ implies that $(g'x',g'x)\in U'$ which together with $(g'x,y)\in U'$ gives $(g'x',y)\in U$.
%
%
%
%
\end{proof}

\begin{lem} \label{CI}
Let $G\act (X,S)$ be an equicontinuous action. Let $T$ be an $S$-compatible topology on $X$.
Let $(\alpha)$ be a directed set.
Assume that $x_\alpha$ is a $T_S$-converging net in $X$ with $T_S\text{-}\lim x_\alpha=x$,
and that $g_\alpha$ is a net in $G$.
Then $T\text{-}\lim g_\alpha x_\alpha$ exists if and only if $T\text{-}\lim g_\alpha x$ exists, in which case they are equal.
\end{lem}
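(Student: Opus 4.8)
The plan is to isolate a single uniform-closeness estimate, powered by equicontinuity, and then feed it into the $S$-compatibility of $T$; the two directions of the equivalence will then be mirror images of one another.

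First I would establish the key claim: \emph{for every $U\in S$ there is $\alpha_0$ such that for all $\alpha\geq\alpha_0$ one has $(g_\alpha x_\alpha, g_\alpha x)\in U$ and $(g_\alpha x, g_\alpha x_\alpha)\in U$.} Since $S$ is symmetric we may assume $U=U^{-1}$. By equicontinuity the $G$-invariant relation $U'=\{(u,v)~|~\forall g\in G,\ (gu,gv)\in U\}$ lies in $S$; taking $g=e$ shows $U'\subseteq U$, and symmetry of $U$ makes $U'$ symmetric. As $U'\{x\}$ is a $T_S$-neighbourhood of $x$ and $x_\alpha\to x$ in $T_S$, eventually $(x_\alpha,x)\in U'$; then for such $\alpha$, applying $g_\alpha$ and using the $G$-invariance of $U'$ gives $(g_\alpha x_\alpha, g_\alpha x)\in U$, and the symmetry of $U'$ gives the reversed pair. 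Note that it is precisely here that the strong hypothesis is used: convergence is assumed in the uniform topology $T_S$, while the conclusion is to be read off in the possibly weaker compatible topology $T$.

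Next, suppose $T\text{-}\lim g_\alpha x=z$ and fix a $T$-neighbourhood $V$ of $z$. By $S$-compatibility applied at the point $z\in V$, choose $z\in V'\in T$ and $U\in S$ with $UV'\subseteq V$. By convergence, eventually $g_\alpha x\in V'$; by the claim, eventually $(g_\alpha x_\alpha, g_\alpha x)\in U$; since $(\alpha)$ is directed, both hold for all $\alpha$ past some common index $\alpha_0$, whence $g_\alpha x_\alpha\in U\{g_\alpha x\}\subseteq UV'\subseteq V$ for $\alpha\geq\alpha_0$. Thus $T\text{-}\lim g_\alpha x_\alpha=z$. The converse is obtained by the identical argument with the roles of $x_\alpha$ and $x$ interchanged, now invoking the reversed pair $(g_\alpha x, g_\alpha x_\alpha)\in U$ from the claim. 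Carrying this out for an arbitrary target $z$ shows that, for each $z$, convergence of $g_\alpha x_\alpha$ to $z$ in $T$ is equivalent to convergence of $g_\alpha x$ to $z$ in $T$; this is exactly the asserted equivalence of existence together with equality of the limits.

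I expect the only delicate point to be the bookkeeping of quantifiers when extracting the $G$-invariant $U'\subseteq U$ from the definition of equicontinuity, and the careful tracking of which slot of each pair is controlled — the symmetry of the filter $S$ being precisely what allows one claim to serve both directions. Everything else is routine use of directedness to combine finitely many eventual conditions.
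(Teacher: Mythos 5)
Your proof is correct and follows essentially the same route as the paper's: equicontinuity supplies a $G$-invariant symmetric uniformity $U'\subseteq U$, which transports the eventual $T_S$-closeness of $x_\alpha$ to $x$ into closeness of $g_\alpha x_\alpha$ to $g_\alpha x$, and $S$-compatibility then converts this uniform estimate into the desired $T$-convergence, in both directions. The only cosmetic difference is that the paper compares two arbitrary nets $T_S$-converging to $x$ using a half-size uniformity $U'U'\subset U$ and then specializes one of them to the constant net $x'_\alpha\equiv x$, whereas you compare $x_\alpha$ with the constant $x$ directly; this streamlines the bookkeeping but is the same argument.
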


\begin{proof}
Let $x_\ga'$ be an arbitrary net in $X$ which $T_S$-converges to $x$.
Suppose that $T\text{-}\lim g_\alpha x_\alpha$ exists and denote it by $y$.
Let $V\in T$ be a neighborhood of $y$.
We will show that there exists $\alpha_0$ such that $\alpha\geq \alpha_0$ implies $g_\alpha x'_\alpha \in V$.
Fix $V'\in T$ around $y$ and a $G$-invariant uniformity $U\in S$ so that $UV'\subset V$.
Let $U'\in S$ be a symmetric uniformity with $U'U'\subset U$.
By the assumptions there exists $\alpha_0$ such that for every $\alpha \geq \alpha_0$, 
$$
 g_\alpha x_\alpha\in V',~(x_\alpha,x) \in U'~\text{and}~(x'_\alpha,x)\in U'.
$$
Thus $(x_\alpha,x'_\alpha)\in U$ and, by the $G$-invariance of $U$, also $(g_\alpha x'_\alpha,g_\alpha x_\alpha)\in U$.
It follows that $g_\alpha x'_\alpha \in UV'\subset V$.

By switching the roles of $x_\ga$ and $x_\ga'$ we deduce that 
$\lim_T g_\alpha x_\alpha$ exists if and only if $\lim_T g_\alpha x'_\alpha$ exists, in which case they are equal.
The lemma follows by specializing to the constant net $x'_\alpha \equiv x$.
\end{proof}

\begin{lem}[A variant ot Mautner's lemma] \label{mautner}
Let $G$ be a topological group.
Let $X$ be a $G$-space equipped with a uniform structure $S$ and an $S$-compatible topology $T$.
Assume that the action is continuous with respect to both topologies $T$ and $T_S$ and 
equicontinuous with respect to $S$.
Let $g_\alpha$ be a net in $G$
and assume for some points $x,y\in X$, $y=T\text{-}{\lim} g_\alpha x$.
Assume $g\in G$ satisfies $\lim g^{g_\alpha^{-1}} =e$.
Then $gy=y$.
\end{lem}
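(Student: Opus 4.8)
The plan is to reduce the statement to Lemma~\ref{CI} by absorbing the ``small'' conjugation error into a perturbation of the base point that converges in the $S$-topology. Write $c_\alpha = g^{g_\alpha^{-1}} = g_\alpha^{-1} g g_\alpha$, so that by hypothesis $c_\alpha \to e$ in $G$, and record the bookkeeping identity $g g_\alpha = g_\alpha c_\alpha$. The point of this identity is to trade the awkward left translate $g g_\alpha x$ for the expression $g_\alpha (c_\alpha x)$, in which the ``moving'' group elements $g_\alpha$ are literally the same as those in the hypothesis $g_\alpha x \to y$, and the only change is that the fixed base point $x$ has been replaced by the slightly perturbed net $c_\alpha x$.

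First I would check that this perturbed net converges in the $S$-topology: since the action is (jointly) continuous with respect to $T_S$ and $c_\alpha \to e$ in $G$ while the base point is held constant, we have $T_S\text{-}\lim c_\alpha x = e\cdot x = x$. This is the step that consumes the hypothesis $\lim g^{g_\alpha^{-1}} = e$, and it is essential that the perturbation be measured in $T_S$ rather than in $T$, since $T_S$ is precisely the topology built from the uniform structure $S$ for which Lemma~\ref{CI} is designed.

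Next I would feed the net $x_\alpha \defq c_\alpha x$, together with the net $g_\alpha$, into Lemma~\ref{CI}. Its hypotheses (equicontinuity with respect to $S$ and $S$-compatibility of $T$) are exactly the standing assumptions, and we have just verified $T_S\text{-}\lim x_\alpha = x$. Since $T\text{-}\lim g_\alpha x = y$ exists by assumption, Lemma~\ref{CI} yields that $T\text{-}\lim g_\alpha x_\alpha$ also exists and equals $y$. Rewriting $g_\alpha x_\alpha = g_\alpha c_\alpha x = g g_\alpha x$, this says $T\text{-}\lim g g_\alpha x = y$.

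Finally I would evaluate that same limit a second way, using continuity in $T$: as $z \mapsto gz$ is $T$-continuous and $g_\alpha x \to y$ in $T$, we get $g g_\alpha x = g(g_\alpha x) \to gy$ in $T$. Thus the single net $(g g_\alpha x)$ $T$-converges both to $y$ and to $gy$, and uniqueness of limits forces $gy = y$. The one genuine subtlety — and the step I would watch most carefully — is the interplay of the two topologies: the Mautner perturbation $c_\alpha x \to x$ must be taken in $T_S$ to match the hypotheses of Lemma~\ref{CI}, while the final comparison of limits takes place in the (possibly coarser) topology $T$; keeping the conjugation convention straight, so that indeed $g g_\alpha = g_\alpha c_\alpha$ with $c_\alpha = g^{g_\alpha^{-1}} \to e$ rather than its inverse, is the only place where a sign-type error could creep in.
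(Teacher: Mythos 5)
Your proof is correct and coincides essentially verbatim with the paper's own argument: the paper likewise observes that the perturbed net $g^{g_\alpha^{-1}}x$ $T_S$-converges to $x$ by continuity of the action on $(X,T_S)$, applies Lemma~\ref{CI} to the nets $g_\alpha$ and $g^{g_\alpha^{-1}}x$, and concludes via the identity $gg_\alpha = g_\alpha\, g^{g_\alpha^{-1}}$ together with $T$-continuity of $z\mapsto gz$ that $gy=(T\text{-}\lim)\, g g_\alpha x=(T\text{-}\lim)\, g_\alpha x= y$. You even use the paper's conjugation convention $g^{h}=hgh^{-1}$ correctly, so the bookkeeping identity and the direction of the Mautner perturbation match the source exactly.
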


\begin{proof}
By continuity of the action $G\act (X,T_S)$ we have
 $(T_S\text{-}\lim)  g^{g_\alpha^{-1}} x=x$.
Applying Lemma~\ref{CI} to the net $g_\alpha$ in $G$ and the net $g^{g_\alpha^{-1}} x$ in $X$,
we deduce that indeed
\[
 gy= g(T\text{-}{\lim}) g_\alpha x=(T\text{-}{\lim}) g g_\alpha x= (T\text{-}{\lim}) g_\alpha\cdot g^{g_\alpha^{-1}}x= (T\text{-}{\lim}) g_\alpha x= y.
\]
\end{proof}

\begin{lem} \label{inverseseq}
Let $G\act (X,S)$ be an equicontinuous action.
Assume that for some net $(g_\alpha)$ in $G$ and $x,y\in X$, $(T_S\text{-}{\lim})g_\alpha x= y$.
Then $(T_S\text{-}{\lim})g_\alpha^{-1}y= x$.
\end{lem}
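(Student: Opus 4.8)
The plan is to unwind the definition of $T_S$-convergence and then exploit directly the fact that equicontinuity furnishes a basis of $G$-invariant uniformities. Recall that a basic neighborhood of a point $z$ in $T_S$ has the form $U\{z\}=\{w:(w,z)\in U\}$ for $U\in S$, so a net $z_\alpha$ $T_S$-converges to $z$ precisely when for every $U\in S$ one has $(z_\alpha,z)\in U$ eventually. Thus the hypothesis reads: for every $U\in S$, eventually $(g_\alpha x,y)\in U$; and the conclusion to be established reads: for every $U\in S$, eventually $(g_\alpha^{-1}y,x)\in U$.

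First I would fix an arbitrary $U\in S$. Using equicontinuity, which asserts that $S$ admits a basis of $G$-invariant uniformities, I would choose a $G$-invariant $U'\in S$ with $U'\subseteq U$; replacing $U'$ by $U'\cap (U')^{-1}$ (still $G$-invariant and in $S$, as the filter is symmetric) I may assume $U'$ symmetric. Applying the hypothesis to $U'$ produces $\alpha_0$ such that $(g_\alpha x,y)\in U'$ for all $\alpha\geq\alpha_0$.

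The crux is a single application of $G$-invariance: since $(g_\alpha x,y)\in U'$ and $U'$ is invariant under the diagonal $G$-action on $X\times X$, acting by $g_\alpha^{-1}$ gives $(g_\alpha^{-1}g_\alpha x,\,g_\alpha^{-1}y)=(x,g_\alpha^{-1}y)\in U'$. By the symmetry of $U'$ this is the same as $(g_\alpha^{-1}y,x)\in U'\subseteq U$, valid for all $\alpha\geq\alpha_0$. Since $U\in S$ was arbitrary, this is exactly the assertion that $(T_S\text{-}\lim)\,g_\alpha^{-1}y=x$.

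There is no serious obstacle here; the argument is a direct verification, and the only point demanding care is the bookkeeping of the direction of the relations. One must use both the symmetry of the filter $S$ (to pass from $(x,g_\alpha^{-1}y)$ to $(g_\alpha^{-1}y,x)$, matching the shape of the basic neighborhoods $U\{x\}$) and, more essentially, the $G$-invariant basis supplied by equicontinuity, which is precisely what allows the single element $g_\alpha^{-1}$ to move the pair without enlarging or shrinking the uniformity. Note that no appeal to the compatible topology $T$, to joint continuity, or to Lemma~\ref{CI} is needed: the statement lives entirely at the level of the uniform structure $S$ and its associated topology $T_S$.
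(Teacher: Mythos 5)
Your proof is correct and is essentially the paper's own argument: both fix a (symmetric) $G$-invariant uniformity $U$ supplied by equicontinuity, use the hypothesis to get $(g_\alpha x,y)\in U$ eventually, apply diagonal $G$-invariance under $g_\alpha^{-1}$ to obtain $(x,g_\alpha^{-1}y)\in U$, and conclude by symmetry. The only cosmetic difference is that the paper phrases convergence via neighborhoods $V\supset U(x)$ while you unwind it directly into the condition ``eventually $(z_\alpha,z)\in U$ for every $U\in S$''.
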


\begin{proof}
For every neighborhood $V$ of $x$ there exists a $G$-invariant uniformity $U$ with $U(x)\subset V$.
By $g_\alpha x\to y$ there exists $\alpha_0$ such that for every $\alpha\geq \alpha_0$, $g_\alpha x\in U(y)$, that is $(g_\alpha x,y)\in U$.
By $G$-invariance we get $(x,g_\alpha^{-1}y)\in U$ and by symmetry $(g_\alpha^{-1}y,x)\in U$.
Therefore for every $\alpha\geq \alpha_0$, $g_\alpha^{-1}y\in U(x)\subset V$.
\end{proof}

\subsection{Universally closed maps and actions}\label{sec:UniClo}

Recall that a continuous map $\pi:X\to Y$ between topological spaces is called proper if the preimage of a compact set is compact, and closed if the image $\pi(A)$ of every closed set $A\subset X$ is closed in $Y$.
Under mild assumptions on $Y$, it is automatic that a proper map is closed.
This is the case if $Y$ is a K-space, e.g when $Y$ is either locally compact or satisfies the first axiom of countability, see \cite{Palais}.
In general however, a proper map is not necessarily closed.
The current section deals with the general case. Recall the following classical theorem:

\begin{thm}\label{thm:cmpt}
A topological space $K$ is compact if and only if for every topological space $Z$, the projection map $K\times Z\to Z$ is closed.
\end{thm}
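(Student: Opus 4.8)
The plan is to prove the two implications separately, as both are classical (this is the Kuratowski--Mr\'owka characterization of compactness). First I would establish that if $K$ is compact then every projection $\pi\colon K\times Z\to Z$ is closed, via the tube lemma. Let $A\subseteq K\times Z$ be closed and fix $z_0\in Z\setminus\pi(A)$. Then the slice $K\times\{z_0\}$ is disjoint from $A$, hence contained in the open set $(K\times Z)\setminus A$. For each $k\in K$ I choose a basic open box $U_k\times V_k$ around $(k,z_0)$ inside this complement; by compactness of $K$ finitely many $U_{k_1},\dots,U_{k_n}$ cover $K$, and then $V:=\bigcap_{i=1}^n V_{k_i}$ is an open neighbourhood of $z_0$ with $K\times V$ disjoint from $A$. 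Thus $V\cap\pi(A)=\emptyset$, so $z_0$ has a neighbourhood missing $\pi(A)$; as $z_0$ was arbitrary, $\pi(A)$ is closed.

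For the converse I would argue by contraposition: assuming $K$ is not compact, I construct a single space $Z$ together with a single closed set whose projection fails to be closed. Non-compactness yields a proper filter $\mathcal F$ on $K$ with empty adherence, i.e. $\bigcap_{F\in\mathcal F}\overline F=\emptyset$ (equivalently, take the filter base of complements of finite unions coming from an open cover with no finite subcover). I then adjoin one point: set $Z=K\sqcup\{\omega\}$, declare every point of $K$ isolated, and take $\{\,\{\omega\}\cup F: F\in\mathcal F\,\}$ as a neighbourhood base at $\omega$; the filter axioms guarantee this is a genuine topology. Since every $F$ is nonempty, $\omega\in\overline K$, so $K$ is \emph{not} closed in $Z$. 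Finally I let $\Delta=\{(k,k):k\in K\}$ be the graph of the inclusion $K\hookrightarrow Z$ and take $A=\overline\Delta$, which is closed by construction; I then claim $\pi(A)=K$, whence $\pi$ is not closed and the contrapositive is established.

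The main obstacle --- and the only genuinely delicate point --- is verifying that $A=\overline\Delta$ projects \emph{exactly} onto $K$ rather than onto $\overline K=K\cup\{\omega\}$; this is where non-compactness is actually used, and where one must be careful not to tacitly assume $K$ is $T_1$. Passing to the closure $\overline\Delta$ (rather than using $\Delta$ itself) is what makes the argument valid for arbitrary spaces: a point $(k_0,z_0)\in\overline\Delta$ with $z_0\in K$ causes no difficulty, since it projects into $K$, while a point of the form $(k_0,\omega)$ lies in $\overline\Delta$ precisely when every neighbourhood of $k_0$ meets every $F\in\mathcal F$, that is, exactly when $k_0$ is an adherent point of $\mathcal F$. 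Because $\mathcal F$ has empty adherence there are no such points, so $\overline\Delta$ contains nothing lying over $\omega$ and therefore $\pi(A)=\pi(\overline\Delta)=K$. Combined with $\omega\in\overline K$, this shows $\pi(A)$ is not closed, completing the proof.
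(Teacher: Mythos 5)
Your proof is correct, but it follows a genuinely different route from the paper's. For the non-trivial direction both arguments are contrapositive and both manufacture a one-point extension as the witness space $Z$, yet the ingredients differ: the paper phrases non-compactness via a net $(x_\alpha)$ with no convergent subnet, takes $Z$ to be the \emph{directed index set} $(\alpha)$ with a maximal point $\infty$ adjoined (topologized by the intervals $[\alpha,\infty]$), and exhibits the closed set directly as the complement of the open set $\bigcup_x \left(U_x\times[\alpha_x,\infty]\right)$, where $U_x$ and $\alpha_x$ witness that $x$ is not a cluster point; the projection is then exactly $Z\setminus\{\infty\}$. You instead use the filter formulation (the classical Kuratowski--Mr\'owka argument): $Z=K\sqcup\{\omega\}$ with $K$ discretized and a filter $\mathcal{F}$ of empty adherence serving as the neighbourhood base at $\omega$, and the closed set is $\overline{\Delta}$, the closure of the graph of the inclusion, whose projection you verify to be exactly $K$. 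Each approach has its merits: yours is the textbook construction, and your handling of the non-$T_1$ subtlety (passing from $\Delta$ to $\overline{\Delta}$, then checking that closure adds no points over $\omega$ because $\mathcal{F}$ has empty adherence) is exactly the care the general setting requires; the paper's version produces a closed set with no closure computation at all, and its net language matches the toolkit used throughout the rest of the paper (Lemmas 2.1--2.3 and the proof of the main theorem are all stated for nets), which is presumably why the authors chose it. You also spell out the tube-lemma direction, which the paper dismisses as standard; that direction is indeed routine and your version of it is correct without any separation hypotheses, consistent with the paper's remark that no separation properties are assumed.
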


Note that we do not assume any separation property of the topological spaces involved. Since we are not aware of a reference for \ref{thm:cmpt} in this generality, we add a proof for the convenience of the reader. 

\begin{proof}
The fact that if $K$ is compact then for every $Z$, $K\times Z\to Z$ is closed is standard and easy.
Assume now $K$ is not compact and pick a directed set $(\alpha)$ and a net $(x_\alpha)$ in $K$ which has no converging subnet.
For every $x\in K$ we can find a neighborhood $U_x$ and $\alpha_x$ such that for every $\alpha\geq \alpha_x$, $x_\alpha\notin U_x$.
Consider the poset obtained by adding to $(\alpha)$ a maximal element, $\infty$.
Observe that the collection consisting of all intervals in $(\ga)$ of the form $[\alpha,\infty]$ forms a base for a topology.
Let $Z$ be the topological space thus obtained.
Check that $\infty\in Z$ is not isolated.
Let $A\subset X\times Z$ be the complement of the open set $\cup_x (U_x\times [\alpha_x,\infty])$.
Observe that $A\cap X\times \{\infty\}=\emptyset$ and for each $\alpha$, $(x_\alpha,\alpha)\in A$, thus the projection of $A$ to $Z$ consists of the subset $Z-\{\infty\}$, which is not closed.
\end{proof}

Here is another basic result of point-set topology for which we couldn't find a proper reference.

\begin{thm} \label{CU}
Let $\pi:X\to Y$ be a continuous map between topological spaces.
The following are equivalent.
\begin{enumerate}
\item
For every topological space $Z$, the map $\pi\times \id{Z}:X\times Z\to Y\times Z$ is a closed map.
\item
$\pi$ is closed and proper.
\item
For every net $(x_\alpha)$ in $X$ which has no converging subnet,
the net $(\pi(x_\alpha))$ has no converging subnet in $Y$.
\end{enumerate}
\end{thm}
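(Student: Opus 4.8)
The plan is to run the cyclic chain $(1)\Rightarrow(2)\Rightarrow(3)\Rightarrow(1)$, working throughout with nets and using Theorem~\ref{thm:cmpt} as the basic input.

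For $(1)\Rightarrow(2)$, closedness of $\pi$ falls out by taking $Z$ a one-point space. For properness I would fix a compact $K\subseteq Y$ and verify the criterion of Theorem~\ref{thm:cmpt} for $\pi^{-1}(K)$. Given any $Z$ and a closed $A\subseteq\pi^{-1}(K)\times Z$, let $\bar A$ denote its closure in $X\times Z$, so that $\bar A\cap(\pi^{-1}(K)\times Z)=A$. By (1) the set $(\pi\times\id{Z})(\bar A)$ is closed in $Y\times Z$, hence so is $S:=(\pi\times\id{Z})(\bar A)\cap(K\times Z)$ inside $K\times Z$. Since $K$ is compact, $K\times Z\to Z$ is closed, so the projection of $S$ to $Z$ is closed; unwinding the definitions shows this projection equals the projection of $A$ to $Z$. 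Thus $\pi^{-1}(K)\times Z\to Z$ is closed for every $Z$, and Theorem~\ref{thm:cmpt} gives that $\pi^{-1}(K)$ is compact.

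For $(2)\Rightarrow(3)$ I argue by contradiction. Suppose $(x_\alpha)$ has no converging subnet while $(\pi(x_\alpha))$ does; after passing to a subnet, assume $\pi(x_{\alpha_\beta})\to y$. Set $F_\beta=\overline{\{x_{\alpha_{\beta'}}:\beta'\geq\beta\}}$. As $\pi$ is closed, each $\pi(F_\beta)$ is closed and contains $y$, whence $F_\beta\cap\pi^{-1}(y)\neq\emptyset$. Properness makes the fiber $\pi^{-1}(y)$ compact, being the preimage of the compact set $\{y\}$, and the sets $F_\beta\cap\pi^{-1}(y)$ form a decreasing family of nonempty closed subsets of it, so their intersection is nonempty. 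Any point of the intersection is a cluster point of $(x_{\alpha_\beta})$, hence of $(x_\alpha)$, a contradiction.

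The implication $(3)\Rightarrow(1)$ is where I expect the genuine difficulty to sit; note that the reverse direction $(1)\Rightarrow(3)$ is by contrast easy, obtained by feeding a hypothetical cluster-point-free net into the space $Z=(\alpha)\cup\{\infty\}$ built in the proof of Theorem~\ref{thm:cmpt} and reading off a contradiction from closedness of $\pi\times\id{Z}$. For $(3)\Rightarrow(1)$, given $Z$, a closed $A\subseteq X\times Z$, and a point $(y,z)$ in the closure of $(\pi\times\id{Z})(A)$, I would take a net $(x_\alpha,z_\alpha)\in A$ with $\pi(x_\alpha)\to y$ and $z_\alpha\to z$; condition (3) then supplies a subnet of $(x_\alpha)$ converging to some $x$, and closedness of $A$ gives $(x,z)\in A$, so $(\pi(x),z)$ lies in the image. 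The main obstacle is precisely that this produces $(\pi(x),z)$ rather than $(y,z)$: one must pin the cluster point $x$ down into the fiber $\pi^{-1}(y)$. The way I would attack this is to refine the net before invoking (3), indexing by pairs $(\alpha,V)$ with $V$ a neighborhood of $y$ and selecting indices with $\pi(x_\alpha)\in V$, thereby driving the $\pi$-image of the refined net into every neighborhood of $y$; reconciling this refinement with the subnet produced by (3) so as to force $\pi(x)=y$ on the nose is the crux of the whole argument.
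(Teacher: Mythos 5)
Your $(1)\Rightarrow(2)$ and $(2)\Rightarrow(3)$ are both correct. The first is essentially the paper's own argument, just unpacked: the paper observes that $\pi^{-1}(K)\times Z\to Z$ is the composition of the closed maps $\pi^{-1}(K)\times Z\to K\times Z$ (the restriction of $\pi\times\id{Z}$ over $K\times Z$) and $K\times Z\to Z$, then invokes Theorem~\ref{thm:cmpt}; your closure-and-trace bookkeeping proves exactly this. Your $(2)\Rightarrow(3)$, however, takes a genuinely different route from the paper's: you intersect closures of tails with the compact fiber $\pi^{-1}(y)$ and extract a cluster point from the finite intersection property, while the paper runs the dual, open-cover argument (cover $\pi^{-1}(y)$ by finitely many $U_x$ that tails eventually avoid, then use closedness of $\pi$ to produce the open set $V=Y\setminus\pi(X\setminus\bigcup_{x\in F}U_x)$ containing $y$ with $\pi^{-1}(V)\subset\bigcup_{x\in F}U_x$, contradicting $\pi(x_\alpha)\to y$). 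Both work; note that your version proves more than (3) asks for: the cluster point you construct lies in the fiber $\pi^{-1}(y)$.

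The genuine gap is $(3)\Rightarrow(1)$, which you do not prove, and the repair you propose (refining the net so as to force $\pi(x)=y$) cannot possibly work: in the generality the paper insists on (no separation axioms; see the remark before Theorem~\ref{thm:cmpt}), the implication $(3)\Rightarrow(1)$ is simply false. Take $X=\{a\}$ a one-point space, $Y=\{a,b\}$ indiscrete, and $\pi$ the inclusion: every net in $X$ converges, so (3) holds vacuously, yet $\pi(X)=\{a\}$ is not closed in $Y$, so (1) and (2) fail; here $\pi^{-1}(b)=\emptyset$, so no refinement can drive a cluster point into the relevant fiber. The honest fixes are: (i) assume $Y$ is Hausdorff, so that limits in $Y$ are unique, $\pi(x)=y$ comes for free, and your first, naive argument is already complete with no refinement needed; or (ii) replace (3) by the stronger statement $(3')$: whenever $\pi(x_\alpha)\to y$, some subnet of $(x_\alpha)$ converges to a point of $\pi^{-1}(y)$. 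Statement $(3')$ is exactly what your FIP argument establishes in $(2)\Rightarrow(3)$, and $(3')\Rightarrow(1)$ is your naive argument verbatim, valid for arbitrary spaces; so your own ingredients, rearranged as $(1)\Rightarrow(2)\Rightarrow(3')\Rightarrow(1)$, yield a complete and fully general proof, with (3) recovered as a trivial weakening of $(3')$. Finally, you should know that the difficulty you flagged is not a defect of your approach alone: the paper's proof of $(3)\Rightarrow(1)$ writes $(y,z)=(\pi\times\id{Z})(x,z)$, silently identifying $\pi(x)$ with $y$, which uses uniqueness of limits in $Y$ --- precisely the separation property the paper disclaims. Your instinct that this identification is the crux was exactly right; as literally stated, the theorem requires either the Hausdorff hypothesis or the strengthening of (3).
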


\begin{proof}
$(1)\Rightarrow (2):$
By taking $Z$ to be a point we see that $\pi$ is closed.
In order to see that $\pi$ is proper, consider an arbitrary compact subset $K\subset Y$ and an arbitrary topological space $Z$. The projection map $\pi^{-1}(K)\times Z\to Z$ is closed, being the composition of the closed maps $\pi^{-1}(K)\times Z \to K\times Z \to Z$. Thus by Theorem \ref{thm:cmpt} $\pi^{-1}(K)$ is compact. 

$(2)\Rightarrow (3):$
Assume by contradiction that $(x_\alpha)$ is a net in $X$ which has no converging subnet and $\pi(x_\alpha) \to y \in Y$.
Denote $X_y=\pi^{-1}(\{y\})$. Since $\pi$ is closed and proper, $X_y$ is non-empty and compact.
For every $x\in X_y$ we can find an open neighborhood $U_x$ of $x$ and $\alpha_x$ such that $\alpha \geq \alpha_x \Rightarrow x_\alpha\notin U_x$.
By compactness of $X_y$ we can find a finite set $F\subset X_y$ such that $X_y \subset \cup_{x\in F} U_x$.
We let 
$$
 V=Y\setminus \pi(X\setminus \cup_{x\in F} U_x).
$$
Since $\pi$ is closed $V$ is an open neighborhood of $y$ in $Y$. Note that $U=\pi^{-1}(V) \subset \cup_{x\in F} U_x$.
Let $\alpha_0$ be an index satisfying $\alpha_0\geq \alpha_x$ for every $x\in F$.
Then for every $\alpha\geq \alpha_0$, $x_\alpha\notin U$ and thus $\pi(x_\alpha)\notin V$, contradicting the assumption that $\pi(x_\alpha)\to y$.

$(3)\Rightarrow (1):$
Let $A\subset X\times Z$ be a closed set.
Assume, by way of contradiction, that $(\pi\times \id{Z})(A)$ is not closed in $Y\times Z$ and pick a net $(y_\alpha, z_\alpha)\in (\pi\times\id{Z})(A)$ converging to a point $(y, z) \notin (\pi\times\id{Z})(A)$.
Pick lifts $(x_\alpha)$ of $(y_\alpha)$ such that $(x_\alpha, z_\alpha) \in A$.
By our assumption, since $(y_\alpha)$ converges, $(x_\alpha)$ has a converging subnet.
Abusing the notation we assume that $(x_\alpha)\to x$.
It follows that $(x_\alpha, z_\alpha) \to (x, z)$.
Since $A$ is closed, $(x,z)\in A$ and thus $(y,z)=(\pi\times \id{Z})(x,z)\in (\pi\times\id{Z})(A)$. A contradiction.
\end{proof}

\begin{defn}
A map satisfying the above properties is called ``universally closed".
\end{defn}

Recall that a continuous action of $G$ on $X$ is called a {\em proper action} if the map 
\begin{equation} \label{actionmap}
 G\times X \to X\times X,~(g,x)\mapsto (x,gx)
\end{equation}
is a proper map.
Similarly, we say that the action is {\em universally closed} is the map (\ref{actionmap}) is universally closed.
Every universally closed action is proper.

\begin{prop} \label{UCaction}
If $G$ acts on $X$ and the action is universally closed then the point stabilizers are all compact and the quotient topology on the orbit space $X/G$ is Hausdorff. In particular, every orbit is closed.
\end{prop}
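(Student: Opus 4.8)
The plan is to unpack the hypothesis via Theorem~\ref{CU}: ``universally closed'' means that the action map $\Phi\colon G\times X\to X\times X$, $(g,x)\mapsto(x,gx)$, of (\ref{actionmap}) is simultaneously \emph{proper} and \emph{closed}. I would then treat the three assertions in turn, deriving compactness of stabilizers from properness, Hausdorffness of $X/G$ from closedness, and closedness of orbits as a formal consequence of Hausdorffness.

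For the stabilizers, fix $x_0\in X$ and observe that $\Phi^{-1}(\{(x_0,x_0)\})=G_{x_0}\times\{x_0\}$, since $\Phi(g,x)=(x_0,x_0)$ forces $x=x_0$ and $gx_0=x_0$. As $\{(x_0,x_0)\}$ is compact and $\Phi$ is proper, this preimage is compact; projecting onto the first factor (a homeomorphism onto $G_{x_0}$) shows that $G_{x_0}$ is compact.

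For the quotient, let $q\colon X\to X/G$ be the orbit map. I would first check that $q$ is open: for open $U\subseteq X$ one has $q^{-1}(q(U))=\bigcup_{g\in G}gU$, a union of open sets since each translation $x\mapsto gx$ is a homeomorphism (the action being continuous), so $q(U)$ is open by the definition of the quotient topology. Next, the orbit relation $R=\{(x,x'):q(x)=q(x')\}$ is precisely the image $\Phi(G\times X)$; since $\Phi$ is closed and $G\times X$ is a closed subset of itself, $R$ is closed in $X\times X$. I would then invoke the standard fact that an open quotient map with closed orbit relation has a Hausdorff quotient: given distinct orbits $q(x_1)\neq q(x_2)$ the point $(x_1,x_2)$ lies outside the closed set $R$, so there is a basic open box $U_1\times U_2$ disjoint from $R$, and openness of $q$ makes $q(U_1)$ and $q(U_2)$ disjoint open neighborhoods separating $q(x_1)$ from $q(x_2)$.

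Finally, the ``in particular'' clause is immediate: a Hausdorff space is $T_1$, so each singleton $\{q(x_0)\}\subseteq X/G$ is closed, whence its preimage $Gx_0=q^{-1}(q(x_0))$ is closed by continuity of $q$. The only step requiring genuine care—and thus the main obstacle—is the Hausdorffness argument, where I must be certain that (i) the orbit relation really coincides with $\Phi(G\times X)$ and is closed purely from the closedness of $\Phi$, and (ii) the openness of $q$ is correctly used to transport the separating box down to the quotient. Everything else is a direct unwinding of the definitions of properness and continuity.
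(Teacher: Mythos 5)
Your proof is correct and takes essentially the same approach as the paper's: compactness of stabilizers from properness, and Hausdorffness of $X/G$ from the closedness of the orbit relation $\mathrm{Im}(G\times X\to X\times X)$ together with openness of the quotient map. The paper merely condenses your box-separation step by noting that the image of the open complement of the orbit relation under the open map $X\times X\to X/G\times X/G$ is open, so the diagonal is closed; this is the same argument in disguise.
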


\begin{proof}
The fact that stabilizers are compact follows from the properness of the action.
To show that $X/G$ is Hausdorff, observe that
the set $X\times X\setminus \mathrm{Im}(G\times X)$ is open in $X\times X$ and hence its image
under the open map to $X/G\times X/G$ is open. Thus its complement, the
diagonal of $X/G\times X/G$, is closed.
\end{proof}

The following is a useful variant.

\begin{prop} \label{variant}
Suppose a topological group $G$ acts on $X$ and $T,T'$ are two topologies on $X$ such that the map 
\[ G\times (X,T) \to (X,T) \times (X,T'), \quad (g,x)\mapsto (x,gx) \]
is universally closed. Assume that points in $X$ are $T$-closed.
Then the stabilizers are compact and the $G$-orbits in $X$ are $T'$-closed.
\end{prop}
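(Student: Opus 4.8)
The plan is to follow the proof of Proposition~\ref{UCaction}, keeping careful track of which of the two topologies $T,T'$ is in play at each stage, since the target is now the mixed product $(X,T)\times(X,T')$. Write $\phi$ for the map in the statement. By Theorem~\ref{CU}, the assumption that $\phi$ is universally closed means exactly that $\phi$ is closed and proper; I will use properness to get compact stabilizers and closedness to get $T'$-closed orbits.

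For the stabilizers, fix $x\in X$ and observe that $\phi^{-1}(\{(x,x)\})=G_x\times\{x\}$, since $\phi(g,x')=(x,x)$ forces $x'=x$ and then $gx=x$. A single point is compact in any topology, so properness of $\phi$ shows that $G_x\times\{x\}$ is compact in $G\times X$; projecting to the first factor, a continuous map, gives that $G_x$ is compact.

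For the orbits, fix $x$ and consider the subset $G\times\{x\}$ of the domain $G\times(X,T)$. The hypothesis that points are $T$-closed is precisely what makes $G\times\{x\}$ closed there, so closedness of $\phi$ gives that $\phi(G\times\{x\})=\{x\}\times Gx$ is closed in $(X,T)\times(X,T')$. To convert this into a statement about $Gx$ inside $(X,T')$, I would use the slice embedding $\iota\colon(X,T')\to(X,T)\times(X,T')$, $y\mapsto(x,y)$, which is continuous since its coordinate maps are the constant map $y\mapsto x$ and the identity of $(X,T')$. As $\iota^{-1}(\{x\}\times Gx)=Gx$, continuity of $\iota$ yields that $Gx$ is $T'$-closed.

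The only real subtlety, and the place where the hypotheses enter essentially, is the bookkeeping of the two topologies: the closed set $G\times\{x\}$ must sit on the $T$-side of the domain, which is why we need points to be $T$-closed, while the slice $\iota$ must land continuously in the mixed product, which forces the moving factor to carry $T'$ so that the identity coordinate is continuous. Once these roles are correctly assigned there is no genuine obstacle; the argument is simply a clean separation of properness (compact stabilizers) from closedness ($T'$-closed orbits).
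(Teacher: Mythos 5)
Your proof is correct and is essentially the same as the paper's: the paper likewise obtains compact stabilizers from properness and obtains $T'$-closedness of the orbit by applying closedness of the map to $G\times\{x_0\}$ and then pulling back the closed set $\{x_0\}\times Gx_0$ along the continuous slice map $(X,T')\to (X,T)\times(X,T')$, $y\mapsto (x_0,y)$. Your version just spells out the details (the identification $\phi^{-1}(\{(x,x)\})=G_x\times\{x\}$ and the role of $T$-closed points) that the paper leaves implicit.
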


\begin{proof}
Again, compactness of the stabilizers follows from properness.
Given a point $x_0\in X$, the image of $G\times \{x_0\}$, that is $\{x_0\}\times Gx_0$, is a closed subset of $(X,T) \times (X,T')$ and its preimage in $X$
under the continuous map $(X,T')\to (X,T)\times (X,T')$, $x\mapsto (x_0,x)$ is the orbit $Gx_0$.
\end{proof}

\section{Quasi-semisimple groups} \label{qss}

The main objects of this paper are semisimple Lie groups over local fields. However, much of the things we prove are based on two specific properties, namely:
\begin{itemize}
\item the existence of a Cartan $KAK$ decomposition for $G$, 
\item for every $a\in A$, the group $G$ is generated by elements $g$ with the following property\footnote{In the classical case one can deduce this property using a root space decomposition of the Lie algebra.}: 
$\lim_{n\to\infty}a^nga^{-n}=1$, $\lim_{n\to-\infty}a^nga^{-n}=1$ or $\sup_{n\in\BZ}\| a^nga^{-n}\|<\infty$.
\end{itemize} 
This observation encourages us to introduce an axiomatic approach.
Indeed, formulating (variants of) the above as axioms will, on one hand, make our future arguments cleaner and more transparent, while on the other hand, our results will be more general, and apply for other classes of groups. 
Our axiomatic approach is influenced by \cite{ciob}.

Given a topological group $G$ and a net $g_\alpha$ in $G$ we define the following three groups:
\[ U^{(g_\alpha)}_+=\{x\in G~|~g_\alpha^{-1} x g_\alpha \to e\}, \quad
U^{(g_\alpha)}_-=\{x\in G~|~g_\alpha x g_\alpha^{-1} \to e\} \quad \mbox{and} \]
\[ U^{(g_\alpha)}_0=\{x\in G~|~\mbox{every subnet of both nets}~g_\alpha^{-1} x g_\alpha ~\mbox{and}~g_\alpha x g_\alpha^{-1}~\mbox{admits a converging subnet}\}. \]
The following lemma is obvious and left as an exercise to the reader.

\begin{lem} \label{U+-normal}
Let $G$ be a topological group and $g_\alpha$ a net in $G$.
The $U^{(g_\alpha)}_+$, $U^{(g_\alpha)}_-$ and $U^{(g_\alpha)}_0$ defined above are indeed groups and the group $U^{(g_\alpha)}_0$ normalizes both groups $U^{(g_\alpha)}_+$ and $U^{(g_\alpha)}_-$.
\end{lem}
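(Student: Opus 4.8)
The plan is to verify the three claimed closure properties and the normalization statement directly from the defining net-convergence conditions, using only continuity of multiplication and inversion in the topological group $G$ together with the basic net facts recorded in \S\ref{preli}.

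First I would check that $U^{(g_\alpha)}_+$ is a group. The identity lies in it since $g_\alpha^{-1} e g_\alpha = e \to e$. For closure under multiplication, suppose $x,y\in U^{(g_\alpha)}_+$, so $g_\alpha^{-1} x g_\alpha \to e$ and $g_\alpha^{-1} y g_\alpha \to e$; then $g_\alpha^{-1}(xy)g_\alpha = (g_\alpha^{-1} x g_\alpha)(g_\alpha^{-1} y g_\alpha) \to e\cdot e = e$ by joint continuity of multiplication applied to the product net. For inverses, $g_\alpha^{-1} x^{-1} g_\alpha = (g_\alpha^{-1} x g_\alpha)^{-1} \to e^{-1} = e$ by continuity of inversion. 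The argument for $U^{(g_\alpha)}_-$ is identical with $g_\alpha$ and $g_\alpha^{-1}$ interchanged (equivalently, $U^{(g_\alpha)}_- = U^{(g_\alpha^{-1})}_+$), so no separate computation is needed.

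Next I would treat $U^{(g_\alpha)}_0$, where the defining condition is the relative-compactness statement ``every subnet of $g_\alpha^{-1} x g_\alpha$ and of $g_\alpha x g_\alpha^{-1}$ admits a converging subnet.'' The closure proofs run the same way but one must argue with subnets rather than with a single limit. For $x,y\in U^{(g_\alpha)}_0$, to see $xy\in U^{(g_\alpha)}_0$ I would take an arbitrary subnet of $g_\alpha^{-1}(xy)g_\alpha$, pass to a further subnet along which $g_\alpha^{-1} x g_\alpha$ converges (possible since $x\in U^{(g_\alpha)}_0$), then pass to a yet further subnet along which $g_\alpha^{-1} y g_\alpha$ converges (using $y\in U^{(g_\alpha)}_0$); along this common refinement the product $(g_\alpha^{-1} x g_\alpha)(g_\alpha^{-1} y g_\alpha)$ converges by continuity of multiplication, giving the required convergent subnet, and symmetrically for $g_\alpha(xy)g_\alpha^{-1}$. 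Closure under inverses follows from continuity of inversion applied to a convergent subnet of $g_\alpha^{-1} x g_\alpha$. The one point to handle carefully is that the condition is symmetric in the two nets $g_\alpha^{-1}xg_\alpha$ and $g_\alpha x g_\alpha^{-1}$, so both must be checked; this is where the iterated passage to subnets is the only real (and mild) subtlety.

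Finally, for the normalization claim, I would show that conjugation by an element $u\in U^{(g_\alpha)}_0$ preserves $U^{(g_\alpha)}_+$ (and, symmetrically, $U^{(g_\alpha)}_-$). Given $x\in U^{(g_\alpha)}_+$ and $u\in U^{(g_\alpha)}_0$, consider
\[
 g_\alpha^{-1}(uxu^{-1})g_\alpha = (g_\alpha^{-1} u g_\alpha)(g_\alpha^{-1} x g_\alpha)(g_\alpha^{-1} u^{-1} g_\alpha).
\]
Here the middle factor tends to $e$, while the outer two factors form relatively compact nets because $u\in U^{(g_\alpha)}_0$. To conclude that the whole product tends to $e$, I would argue that any subnet admits a further subnet along which $g_\alpha^{-1} u g_\alpha \to c$ for some $c$; along it the product converges to $c\cdot e\cdot c^{-1}=e$, and since every subnet has a further subnet converging to the same limit $e$, the full net converges to $e$ by the net criterion recalled in \S\ref{preli}. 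Hence $uxu^{-1}\in U^{(g_\alpha)}_+$. The main obstacle, such as it is, is purely the bookkeeping of these nested subnet extractions in $U^{(g_\alpha)}_0$ where no single limit is available; everything else is a direct application of continuity of the group operations.
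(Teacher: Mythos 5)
Your proof is correct. Note that the paper itself offers no proof here --- the lemma is explicitly ``left as an exercise to the reader'' --- and your direct verification (continuity of the group operations for $U^{(g_\alpha)}_\pm$, iterated subnet extraction for $U^{(g_\alpha)}_0$, and the conjugation computation $g_\alpha^{-1}(uxu^{-1})g_\alpha = (g_\alpha^{-1} u g_\alpha)(g_\alpha^{-1} x g_\alpha)(g_\alpha^{-1} u^{-1} g_\alpha)$ for normality) is exactly the intended argument. One small attribution quibble: the net fact you invoke at the end --- a net converges to $e$ if every subnet admits a further subnet converging to $e$ --- is not quite the statement recalled in \S\ref{preli} (which is the weaker ``a net converges iff all its subnets converge to the same point''); the fact you need is nonetheless standard and true in any topological space, so this does not affect correctness.
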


\begin{defn}\label{def:qss}
A locally compact topological group $G$ is said to be quasi-semisimple (qss, for short) if there exists a closed subgroup $A<G$ satisfying the following axioms:
\begin{itemize}
\item There exists a compact subset $C\subset G$ such that $G=CAC$.
\item For every net $a_\alpha$ in $A$ with $a_\alpha \to \infty$, there exists a subnet $a_\beta$ such that 
the group $U^{(a_\beta)}_+$ is not pre-compact and the group generated by the three groups
$U^{(a_\beta)}_+$, $U^{(a_\beta)}_-$ and $U^{(a_\beta)}_0$ is dense in $G$.
\end{itemize}
\end{defn}

%

\begin{rem} \label{rem:qss}
The class QSS of quasi-semisimple groups is closed under finite direct products. 
Every compact group is qss and in addition if $H=G/O$ where $O\lhd G$ is a compact normal subgroup, then $H$ is qss iff $G$ is qss.   
\end{rem}

It is well known that Zariski connected semisimple groups over local fields are qss.
This follows for example from \cite[Ch. I, Proposition~(1.2.1)]{Margulis}. 
In particular every Zariski connected semisimple Lie group with finite center is qss.
It is not clear to us whether any quotient group of a general qss group is qss as well.
For a surjective map $\phi:G\to H$, where $G$ is qss relatively to a subgroup $A<G$, 
it is reasonable to expect that $H$ would be qss relatively to $\phi(A)$.
The only problem that might occur is that for some net $\phi(a_\beta) \to \infty$, the group $U_+^{\phi(a_\beta)}$ would be precompact in $H$.
This problem never occurs for semisimple groups.





\begin{thm} \label{thm:ss-qss}
Let $H$ be a locally compact topological group.
Let $k$ be a local field and $\mathbf{G}$ a Zariski connected, semi-simple algebraic group defined over $k$.
Assume there is a continuous surjection $\mathbf{G}(k)\twoheadrightarrow H$.
Then $H$ is qss.
In particular every Zariski connected semi-simple Lie group with finite center is qss.
\end{thm}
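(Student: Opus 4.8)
The plan is to transport the qss structure of $G=\mathbf{G}(k)$ along $\phi$. By \cite[Ch.~I, Prop.~(1.2.1)]{Margulis}, $G$ is qss with respect to a maximal $k$-split torus $A$, with Cartan decomposition $G=CAC$ in which $C=K$ is a maximal compact subgroup. Set $N=\ker\phi$, a closed normal subgroup. Since the assertion ``$H$ is qss'' presupposes that $H$ is locally compact and Hausdorff, and since $G$ is second countable, the open mapping theorem lets me identify $H$ with $G/N$ and treat $\phi$ as the quotient map; it then suffices to prove that $G/N$ is qss.

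I would organize the proof around Remark \ref{rem:qss}. Decompose $\mathbf{G}$ into its $k$-almost-simple factors $\mathbf{G}_1,\dots,\mathbf{G}_m$, with $\mathbf{G}_1,\dots,\mathbf{G}_r$ the $k$-isotropic factors (which carry all the relative roots and contain $\mathbf{A}$) and $\mathbf{G}_{r+1},\dots,\mathbf{G}_m$ the $k$-anisotropic ones (whose groups of $k$-points are compact). The key structural input is the classification of closed normal subgroups: by the Tits simplicity theorem each isotropic $\mathbf{G}_i(k)^{+}$ is simple modulo its finite center, so $N$ projects to each isotropic factor either trivially (centrally) or onto everything, and hence $N$ is, modulo the finite center $Z(\mathbf{G})(k)$, an almost-direct sub-product of the factors. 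Collecting the factors not absorbed by $N$, let $\tilde G$ be the product of the surviving factors. The surviving isotropic part is a semisimple group over $k$, hence qss by \cite{Margulis}; the surviving anisotropic part is compact, hence qss; their product $\tilde G$ is qss since QSS is closed under finite products. Finally $G/N\cong \tilde G/O$, where $O$ is a compact normal subgroup (the surviving finite central part together with whatever compact piece $N$ cuts out of the anisotropic factors), so Remark \ref{rem:qss} gives that $G/N\cong H$ is qss.

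This also settles the one delicate point flagged in Remark \ref{rem:qss}, namely that for $a'_\beta=\phi(a_\beta)\to\infty$ in $\phi(A)$ the contracting group $U_+^{(a'_\beta)}$ is non-precompact: once $N$ is known to meet the surviving factors only centrally, the torus $\phi(A)$ is precisely the maximal split torus of the surviving semisimple quotient (in particular closed), a net $a'_\alpha\to\infty$ lifts to $a_\alpha\to\infty$ in $A$, and the blow-up of $a'_\beta$ forces some contracted root group $\mathbf{U}_\alpha(k)$ of a surviving factor to inject into $H$ with non-precompact image, whence $\phi(U_+^{(a_\beta)})\subseteq U_+^{(a'_\beta)}$ is non-precompact; the generation axiom then follows by pushing forward the dense subgroup $\langle U_+^{(a_\beta)},U_-^{(a_\beta)},U_0^{(a_\beta)}\rangle$. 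I expect the genuine obstacle to be exactly this classification of $N$. It is essential that it be quoted from the structure theory of semisimple groups (Tits; \cite{Margulis}) rather than deduced from the paper's own closed-image results, which would be circular; and a purely soft, dynamical argument cannot suffice, since a one-parameter unipotent can have precompact image in a general topological group (a dense line in a torus), so semisimplicity must enter precisely at the non-precompactness of the surviving unipotent — for the finitely many rank-one constituents this last point can, if desired, be established from scratch in the spirit of the toy case, Theorem \ref{thm:baby}.
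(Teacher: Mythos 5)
Your overall strategy---identify $H$ with $\mathbf{G}(k)/N$, classify $N$ via simplicity of the simple factors, and reduce the crux to non-precompactness of a contracted root group in the quotient---is the same as the paper's, but two of your steps fail as written. First, your classification of $N$ is wrong. Tits' simplicity theorem gives the dichotomy ``central or everything'' for normal subgroups \emph{of} $\mathbf{G}_i(k)^+$, not for the trace of $N$ on $\mathbf{G}_i(k)$: a closed normal subgroup of $\mathbf{G}(k)$ may also swallow part of the compact abelian quotient $\mathbf{G}(k)/\mathbf{G}(k)^+$. Concretely, $N=\mathbf{G}(k)^+$ is closed and normal; for $\mathbf{G}=\PGL_2$ and $k=\BQ_p$ this is $\PSL_2(\BQ_p)$, of index $4$ in $\PGL_2(\BQ_p)$ (the quotient being $\BQ_p^\times/(\BQ_p^\times)^2$)---neither central nor everything, and not an almost-direct sub-product of the factors modulo the (here trivial) center. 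In this example every factor is ``absorbed'' in your sense, yet $H$ is a nontrivial finite abelian group, so your claim $H\cong \tilde G/O$ fails. What is true in general is that $N\cap\mathbf{G}(k)^+$ is (mod finite center) a product $\prod_{i\in I}\mathbf{G}_i(k)^+$ while $N/(N\cap \mathbf{G}(k)^+)$ is compact; hence $H$ is a \emph{compact extension} of the surviving core, not a quotient of a product of factors by a compact normal subgroup. Remark~\ref{rem:qss} covers products and quotients by compact normal subgroups, not compact extensions, and for extensions the generation axiom does not pass formally (nothing guarantees that the closure of $\langle U_+,U_-,U_0\rangle$ meets the other cosets). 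The paper never tries to put $H$ in the form $\tilde G/O$: it pushes the entire qss structure of $G$ forward along $\phi$, using $\mathbf{G}(k)=\mathbf{G}(k)^+\cdot Z_{\mathbf{G}}(\mathbf{S})(k)$ from \cite[Ch.~I, Proposition~1.5.4(vi)]{Margulis} together with the observation that $\phi(Z_{\mathbf{G}}(\mathbf{S})(k))$ centralizes $A=\phi(\mathbf{S}(k))$ and so lies in $U_0$; density in $H$ is then inherited from density in $G$, and only the non-precompactness of $U_+$ remains to be checked in $H$.

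Second, that remaining point is exactly the step you leave unproved: you assert that a contracted root group $\mathbf{U}_\alpha(k)$ of a surviving factor ``injects into $H$ with non-precompact image,'' but, as your own closing remark concedes, injectivity is worthless here (a line injects densely into a torus). The argument that works is: if the image were precompact, then $\mathbf{U}_\alpha(k)\subseteq CN$ for some compact $C$, and applying the projection onto the surviving simple factor gives a contradiction, since $N$ projects there into a finite central set while $\mathbf{U}_\alpha(k)$ projects onto itself, a closed non-compact subgroup. This requires an honest projection onto the factor, which does not exist at the level of $\mathbf{G}(k)$ because $\mathbf{G}$ is only an \emph{almost}-direct product of its factors. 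That is precisely why the paper passes to the simply connected cover $\tilde{\mathbf{G}}(k)$, which is a genuine direct product whose isotropic factors are simple modulo center (Kneser--Tits/Platonov), classifies the preimage of $N$ there, and then runs the root-space argument factorwise. Your plan can be repaired, but only by importing both devices---the push-forward of the qss structure to handle the compact abelian defect, and the simply connected cover to get factor projections---at which point it coincides with the paper's proof.
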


\begin{rem}
We will see in Corollary~\ref{cor:imagess} that the assumption that $H$ is locally compact is in fact redundant.
\end{rem}

\begin{proof}
Since $\mathbf{G}(k)$ is locally compact, by a standard Baire category argument, $H$ is isomorphic as a topological group to $\mathbf{G}(k)/N$
for some closed normal subgroup $N\lhd \mathbf{G}(k)$.
We denote by $\phi:\mathbf{G}(k) \to \mathbf{G}(k)/N$ the natural surjection.
Recall that $\mathbf{G}$ contains a closed normal subgroup $\mathbf{G}(k)^+$ such that  $\mathbf{G}(k)/\mathbf{G}(k)^+$ is abelian and compact, and even finite when $\mathrm{char}(k)=0$.
The group $\mathbf{G}(k)$ is the image of the natural isogeny $\tilde{\mathbf{G}}(k) \to \mathbf{G}(k)$ where $\tilde{\mathbf{G}}$ is the simply connected form of $\mathbf{G}$.
By \cite[Ch. I, Proposition~1.5.4(vi)]{Margulis}, $\mathbf{G}(k)=\mathbf{G}(k)^+\cdot Z_{\mathbf{G}}(\mathbf{S})(k)$ where $\mathbf{S}$ is a $k$-split torus.
We let $A=\phi(\mathbf{S}(k))$.
In view of the discussion above, our only task is to show that for a net $(s_\alpha)$ in $\mathbf{S}(k)$ which tends to $\infty$ mod $N$, one can pass to a subnet $(s_\beta)$ such that $U^{\phi(s_\beta)}$ is not precompact.
We abuse notation and view $\mathbf{S}$ a subgroup of $\tilde{G}$.
Consider the preimage of $N$ in $\tilde{\mathbf{G}}(k)$ and mod out the finite center.
This is a normal subgroup in a product of simple non-abelian groups, thus consists of a product of factors.
Moding out these factors, we still have that $s_\alpha\to\infty$.
The non-pre-compactness of $U^{\phi(s_\beta)}$ follows by a standard root space decomposition argument.
\end{proof}

\begin{rem}[Adelic groups are qss]
Let $\BK$ be a global field and $\mathbf{G}$ a Zariski connected, simply connected, semisimple $\BK$-algebraic group. 
Assume that $\mathbf{G}$ has no anisotropic factor. Let $\BA=\BA_\BK$ be the associated ring of adels. Then $\mathbf{G}(\BA)$  is qss. To see this recall that $\mathbf{G}(\BA)$ is the restricted topological product of $\mathbf{G}(\BK_v)$ relative to the open compact subgroups $\mathbf{G}(\OO_v)<\mathbf{G}(\BK_v)$ where $v$ runs over the finite valuations, and $\OO_v$ is the local ring of $\BK_v$. The reason $\mathbf{G}(\BA)$ is qss is that the same subgroups $\mathbf{G}(\OO_v)$ used in the construction of restricted topological product can be used in the associated $CAC$ (or rather $KAK$) decomposition of the corresponding factors $\mathbf{G}(\BK_v)$. It is easy to verify the details.
\end{rem}

Another family of qss groups is given by the following (see \cite{ciob} and \cite{caprace-ciob}):

\begin{thm} \label{thm3.6}
Let $G$ be a group acting strongly transitively on an affine building.
Then $G$ is qss.
In particular every group of automorphisms of a simplicial tree whose action on the boundary of the tree is 2-transitive is qss.
\end{thm}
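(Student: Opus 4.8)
The plan is to verify the two axioms of Definition~\ref{def:qss} directly from the structure theory of strongly transitive actions on affine buildings, following \cite{ciob} and \cite{caprace-ciob}. Let $\Delta$ be the affine building on which $G$ acts, fix an apartment $\mathbb{A}\subset\Delta$ with underlying Euclidean space $V$, and fix a special vertex $x_0\in\mathbb{A}$ contained in an alcove $c_0$. Strong transitivity means $G$ acts transitively on the set of pairs (apartment, alcove contained in it), and this is exactly the hypothesis producing an affine $BN$-pair: with $B=\Stab_G(c_0)$ and $N=\Stab_G(\mathbb{A})$ (the setwise stabilizer of the apartment), the pair $(B,N)$ satisfies the $BN$-pair axioms with affine Weyl group $W_{\mathrm{aff}}=N/(B\cap N)$, and $N$ acts on $\mathbb{A}$ through $W_{\mathrm{aff}}=L\rtimes W$, where $L$ is the translation lattice and $W$ the finite spherical Weyl group. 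Throughout I take $G$ to be locally compact for the permutation topology (equivalently $\Delta$ locally finite and essential), so that the vertex stabilizer $K:=\Stab_G(x_0)$ is a compact open subgroup.

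The subgroup witnessing the qss property is the translation subgroup $A<N$ consisting of elements acting on $\mathbb{A}$ as translations, i.e. the preimage of $L$ under $N\to W_{\mathrm{aff}}$; as the preimage of a discrete subgroup under the continuous evaluation map into $\Isom(\mathbb{A})$, it is closed. For the first axiom I would invoke the Cartan decomposition attached to the $BN$-pair. Since the kernel $B\cap N$ is compact and $N/A\cong W_{\mathrm{aff}}/L=W$ is finite, we have $N=C_0A$ for a compact (indeed finite-modulo-compact) set $C_0$, whence $G=KNK=(KC_0K)\,A\,K$. Taking $C=KC_0K$, which is compact as the continuous image of $K\times C_0\times K$, gives $G=CAC$. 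Concretely this says any $g$ can be written $k_1ak_2$ by moving the pair $(x_0,gx_0)$ into $\mathbb{A}$ using the transitivity of $K$ on apartments through $x_0$, and then using a translation to send $gx_0$ back onto $x_0$ up to a bounded correction.

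For the second axiom, let $a_\alpha\to\infty$ be a net in $A$, with translation vectors $v_\alpha\in V$ leaving every bounded region. Passing to a subnet $a_\beta$ I may assume the vectors $v_\beta$ remain in a fixed closed Weyl chamber and that $v_\beta/\lvert v_\beta\rvert$ converges to a nonzero direction $\xi$. The contraction group $U_+^{(a_\beta)}=\{x:a_\beta^{-1}xa_\beta\to e\}$ is then the horospherical subgroup associated to $\xi$, namely the product of the root groups $U_r$ with $r(\xi)>0$: for such $r$ one has $\lvert r(v_\beta)\rvert\to\infty$, so conjugation by $a_\beta^{-1}$ genuinely contracts $U_r$. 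Likewise $U_-^{(a_\beta)}$ is the opposite horospherical subgroup and $U_0^{(a_\beta)}$ contains the bounded Levi part (the root groups $U_r$ with $r(\xi)=0$, together with $B\cap N$ and the finite part). Since $\Delta$ is essential the roots span $V^\ast$, so $\xi\neq0$ forces at least one root with $r(\xi)>0$; the associated root group is unbounded, giving non-precompactness of $U_+^{(a_\beta)}$. Finally the density of $\langle U_+^{(a_\beta)},U_-^{(a_\beta)},U_0^{(a_\beta)}\rangle$ is exactly the assertion that the unipotent radicals of two opposite parabolics, together with their common Levi, generate a dense subgroup of $G$ — the $BN$-pair incarnation of ``generation by opposite unipotents''. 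Both of these facts are carried out in \cite[Proposition~3.6]{caprace-ciob} and \cite{ciob}, to which I would refer for the root-group computations.

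The main obstacle is this second axiom, and specifically the uniform control of the contraction groups along the net: a single net of translations need not respect one fixed system of root groups, so one must first pass to a subnet stabilizing the direction $\xi$, and then argue that a nonzero direction already forces a nontrivial, unbounded positive horospherical subgroup — this is where the geometry of sectors in the apartment does the real work, while the $CAC$ decomposition is a routine consequence of strong transitivity. The ``in particular'' is then the rank-one specialization: a simplicial tree is an affine building of type $\tilde A_1$ whose spherical building at infinity is its set of ends, and for such rank-one buildings strong transitivity is equivalent to $2$-transitivity on the boundary, so the tree statement follows at once from the general one.
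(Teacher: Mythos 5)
The first thing to say is that the paper contains no proof of this statement at all: the theorem is quoted from the literature, with the pointer ``see \cite{ciob} and \cite[Proposition~3.6]{caprace-ciob}'' and the remark that Burger--Mozes \cite{BM} were the first to implicitly verify these axioms for boundary-$2$-transitive tree groups. So your proposal can only be measured against the strategy of those references. Your verification of the first axiom is fine: the decomposition $G=CAC$, with $C$ built from a special-vertex stabilizer and $A$ the lift to $N$ of the translation lattice, is the standard Cartan decomposition coming from the affine $BN$-pair, and your explicit local-compactness caveat actually repairs an imprecision in the statement as printed.

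The gap is in the second axiom. Your identification of $U^{(a_\beta)}_{\pm}$ with products of root groups $U_r$ presupposes a root group datum, i.e.\ the Moufang property, and strong transitivity of the $G$-action does \emph{not} provide one. Irreducible affine buildings of dimension at least $3$ are Moufang by Tits' classification, but dimension-$2$ (exotic $\tilde A_2$) buildings need not be, and --- crucially --- trees are not: in the ``in particular'' clause, e.g.\ for $G=\aut(T_d)$ acting on the $d$-regular tree, there are no root groups whatsoever, so ``the product of the root groups $U_r$ with $r(\xi)>0$'' is not defined, and your argument collapses in exactly the case the theorem singles out. (Note also that the Moufang cases are essentially the Bruhat--Tits buildings of algebraic groups, which are already covered by Theorem~\ref{thm:ss-qss}; the whole point of this theorem is the non-linear, non-Moufang generality.) What the cited references and Burger--Mozes actually do is define the contraction groups geometrically --- as elements fixing pointwise arbitrarily large horoballs centered at the attracting end $\xi$, resp.\ sectors --- and then use strong transitivity, resp.\ $2$-transitivity on the boundary, to establish both non-precompactness of $U_+$ and density of $\langle U_+,U_-,U_0\rangle$. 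Deferring these two facts to the references is legitimate (it is what the paper itself does), but then that deferral \emph{is} the proof of the second axiom, and what you describe as being deferred (``root-group computations'') is not what those references contain. Finally, the equivalence you invoke for the tree case --- $2$-transitivity on $\partial T$ implies strong transitivity --- is itself a nontrivial result of Burger--Mozes/Caprace--Ciobotaru, not a formality, and should be cited as such.
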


We note that the first ones to implicitly use the qss axioms for a boundary 2-transitive tree group are Burger and Mozes, in their proof of the Howe--Moore property for such groups in \cite{BM}.

\section{The main theorem} \label{sec:main}


The main result of this paper is the following general statement:

\begin{thm} \label{HM}\label{mainthm}
Let $G$ be a quasi-semisimple group.
Let $X$ be a $G$-space equipped with a uniform structure $S$ and an $S$-compatible topology $T$.
Assume that the action is continuous with respect to both topologies $T$ and $T_S$ and 
equicontinuous with respect to $S$.
Suppose that no non-compact normal subgroup of $G$ admits a global fixed point in $X$.
Then the map
\[ \phi:G\times (X,T_S) \to (X,T_S) \times (X,T), \quad (g,x)\mapsto (x,gx) \]
is universally closed. In particular, it is proper.
\end{thm}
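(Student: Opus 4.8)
The plan is to verify criterion (3) of Theorem~\ref{CU} for $\phi$. So I start from a net $(g_\alpha,x_\alpha)$ in $G\times(X,T_S)$ with no convergent subnet and, arguing by contradiction, assume that its image does converge; after passing to a subnet, $x_\alpha\to x$ in $T_S$ and $g_\alpha x_\alpha\to y$ in $T$. Since $x_\alpha$ already converges in $T_S$, the absence of a convergent subnet for $(g_\alpha,x_\alpha)$ forces $g_\alpha\to\infty$ in $G$ (local compactness turns ``no convergent subnet'' into ``escaping every compact set'' along a subnet). If $G$ is compact the criterion is immediate, since then $g_\alpha$ has a convergent subnet, so the obstruction sits in the $T_S$-coordinate $x_\alpha$ and the image already has no convergent subnet. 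So I assume $G$ non-compact; the aim is to manufacture a global fixed point for a non-compact normal subgroup, contradicting the hypothesis.

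I first normalize the data. By Lemma~\ref{CI}, $x_\alpha\to x$ in $T_S$ together with $g_\alpha x_\alpha\to y$ in $T$ gives $g_\alpha x\to y$ in $T$, trading the moving source for the fixed point $x$. Using the Cartan decomposition $G=CAC$ of Definition~\ref{def:qss}, I write $g_\alpha=c_\alpha a_\alpha c'_\alpha$ and, by compactness of $C$, pass to a subnet with $c_\alpha\to c$ and $c'_\alpha\to c'$, so that $a_\alpha\to\infty$ in $A$. Joint continuity in $T$ and $T_S$ and a second use of Lemma~\ref{CI} then reduce everything to $a_\alpha x'\to y'$ in $T$, where $x'=c'x$ and $y'=c^{-1}y$. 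Finally the second qss axiom lets me pass to a subnet $a_\beta$ for which $U^{(a_\beta)}_+$ is not pre-compact and $\langle U^{(a_\beta)}_+,U^{(a_\beta)}_-,U^{(a_\beta)}_0\rangle$ is dense in $G$.

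Now Lemma~\ref{mautner}, applied to the net $a_\beta$ and the convergence $a_\beta x'\to y'$ in $T$, shows at once that $U^{(a_\beta)}_+$ fixes $y'$. The key step, which I expect to be the main obstacle, is to obtain the same for the opposite unipotents $U^{(a_\beta)}_-$. In the toy case of Theorem~\ref{thm:baby} this is symmetric: there $T=T_S$, so Lemma~\ref{inverseseq} converts $a_\beta x'\to y'$ into the backward convergence $a_\beta^{-1}y'\to x'$, and a single zig-zag element $b=a_{\beta'}a_\beta^{-1}$ then satisfies $b\,y'\to y'$ and $b^{-1}y'\to y'$, whence Mautner applied to $b$ and $b^{-1}$ fixes $y'$ by both $U_+$ and $U_-$. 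In the present generality $T$ is genuinely weaker than $T_S$ and $a_\beta x'\to y'$ holds only in $T$, so Lemma~\ref{inverseseq} is unavailable and the backward convergence is not free. This is exactly the role I would assign to the Zig-Zag Lemma (Lemma~\ref{difference}): it supplies a net $h\to\infty$ returning to $y'$, with the backward control lost from Lemma~\ref{inverseseq} recovered instead from the strong convergence $g_\alpha^{-1}(g_\alpha x_\alpha)=x_\alpha\to x$ in $T_S$, fed back through Lemma~\ref{CI}. Applying Lemma~\ref{mautner} to $h$ and to $h^{-1}$ should then give that $U^{(a_\beta)}_-$, as well as $U^{(a_\beta)}_+$, fixes $y'$.

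With both $U^{(a_\beta)}_+$ and $U^{(a_\beta)}_-$ fixing $y'$, let $F$ be the set of their common fixed points; it contains $y'$ and is closed in $(X,T_S)$. By Lemma~\ref{U+-normal}, $U^{(a_\beta)}_0$ normalizes $U^{(a_\beta)}_+$ and $U^{(a_\beta)}_-$, hence preserves $F$, while $U^{(a_\beta)}_+$ and $U^{(a_\beta)}_-$ fix $F$ pointwise; thus the dense subgroup $\langle U^{(a_\beta)}_+,U^{(a_\beta)}_-,U^{(a_\beta)}_0\rangle$ preserves $F$, and by $T_S$-continuity of the action so does all of $G$. Consequently $K=\{g\in G:g\text{ fixes }F\text{ pointwise}\}$ is a normal subgroup of $G$; it contains the non-pre-compact group $U^{(a_\beta)}_+$, so it is non-compact, and it fixes $y'\in F$. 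This exhibits a non-compact normal subgroup with a global fixed point, contradicting the hypothesis. Hence $g_\alpha$ cannot escape to infinity, so $(g_\alpha,x_\alpha)$ has a convergent subnet; by criterion (3) of Theorem~\ref{CU}, $\phi$ is universally closed, and in particular proper.
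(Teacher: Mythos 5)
Your opening reductions are exactly the paper's Step 1 and the start of its Step 2: criterion (3) of Theorem~\ref{CU}, the $CAC$ decomposition with convergent $c_\alpha, c'_\alpha$, Lemma~\ref{CI} to replace $x_\alpha$ by $x$, the qss axiom, and Mautner to make $U^{(a_\beta)}_+$ fix $y'$. The gap is precisely at the step you flag as the main obstacle, and the mechanism you propose does not close it. To make $U^{(a_\beta)}_-$ fix $y'$ via Lemma~\ref{mautner} you need a net $h_\gamma$ such that (i) $T\text{-}\lim h_\gamma z$ exists and equals $y'$ for some $z$, and (ii) $h_\gamma^{-1}gh_\gamma\to e$ for every $g\in U^{(a_\beta)}_-$. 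Neither is attainable with the tools you invoke. The Zig-Zag Lemma~\ref{difference} gives nets $n,n'$ with $nx'\to y'$ and $n'x'\to y'$ \emph{only in $T$}, and to evaluate $T\text{-}\lim n^{-1}n'x'$ you must pass $n^{-1}$ across the $T$-limit of $n'x'$; but Lemma~\ref{CI} requires the point net to converge in $T_S$, not merely in $T$. For the same reason your proposed feedback from $g_\alpha^{-1}(g_\alpha x_\alpha)=x_\alpha\to x$ is unusable: to convert it into backward convergence such as $g_\alpha^{-1}y\to x$ you would need $g_\alpha x_\alpha\to y$ in $T_S$, and you only have $T$; Lemma~\ref{inverseseq} likewise demands $T_S$-convergence. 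Finally, even granting a zig-zag net $h=n^{-1}n'\to\infty$ that returns to $y'$, Mautner applied to $h$ and $h^{-1}$ fixes $y'$ by the contraction groups $U^{(h)}_\pm$ of $h$, which have no reason to contain $U^{(a_\beta)}_-$: the qss definition does not assume $A$ abelian, and even if it did, conjugating $g\in U^{(a_\beta)}_-$ by $h$ involves conjugation by the escaping element $n$, which destroys convergence to $e$.

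The paper resolves this by never attempting to make $U_-$ fix $y'$. Its Step 2 uses only $U_+$: since $U^{(a_\beta)}_+$ fixes $y$ and is not precompact, the stabilizer of $y$ is non-compact, so by (the contrapositive of) Proposition~\ref{UCaction} the action $G\times(X,T_S)\to(X,T_S)\times(X,T_S)$ is itself not universally closed. This reduces the entire problem to the case $T=T_S$, where the missing tools become available: rerunning Step 1 with $T=T_S$ yields $a_\alpha x\to y$ in $T_S$, and then the Zig-Zag Lemma combined with Lemma~\ref{inverseseq} and Lemma~\ref{CI} produces a net $a_{\beta'}\to\infty$ in $A$ with $a_{\beta'}x\to x$ in $T_S$ --- the \emph{same} point on both sides, which is what makes the backward convergence $a_{\beta'}^{-1}x\to x$, and hence the $U_-$-fixing, possible. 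Re-applying the qss axiom to this new net and Mautner twice, both $U_+$ and $U_-$ (of the new net) lie in $G_x$, and normality plus non-precompactness of $U_+$ give the contradiction --- at this new point $x$, not at your $y'$. Your closing paragraph (normality via the common fixed-point set $F$) is a workable variant of the paper's argument that the closed group generated by $U_+$ and $U_-$ is normal, but it sits on top of the unproven key step, so the proof as proposed is incomplete.
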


Applying Proposition~\ref{variant} we get the following.

\begin{cor} \label{orbits}
Under the conditions of Theorem~\ref{mainthm} we have
\begin{enumerate}
\item the stabiliser in $G$ of every point in $X$ is compact, and 
\item the $G$-orbits in $X$ are $T$-closed.
\end{enumerate}
\end{cor}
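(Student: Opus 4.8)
The plan is to observe first that, granting the universal closedness asserted in Theorem~\ref{mainthm}, Corollary~\ref{orbits} is a formal consequence of Proposition~\ref{variant}. One takes the two topologies of Proposition~\ref{variant} to be $T_S$ (in the role of its finer topology $T$) and $T$ (in the role of its $T'$). Theorem~\ref{mainthm} says exactly that
\[ \phi : G\times (X,T_S) \to (X,T_S)\times (X,T),\quad (g,x)\mapsto (x,gx) \]
is universally closed, and points of $(X,T_S)$ are closed once $S$ is separated (we may assume this, passing otherwise to the separated quotient, which affects neither stabilizers nor orbit-closedness). Proposition~\ref{variant} then gives immediately that point stabilizers are compact and that $G$-orbits are $T$-closed, i.e. assertions (1) and (2). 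Thus the whole content lies in Theorem~\ref{mainthm}, and I describe how I would prove that.

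To establish Theorem~\ref{mainthm} I would use the net characterization of universal closedness, Theorem~\ref{CU}(3), arguing by contradiction. Suppose $\phi$ is not universally closed; then there is a net $(g_\alpha,x_\alpha)$ in $G\times X$ with no subnet converging in $G\times(X,T_S)$, yet with $(x_\alpha,g_\alpha x_\alpha)$ convergent in $(X,T_S)\times(X,T)$, say $x_\alpha\to x$ in $T_S$ and $g_\alpha x_\alpha\to y$ in $T$. Since $x_\alpha$ already converges, $g_\alpha$ has no convergent subnet, so by local compactness $g_\alpha\to\infty$ in $G$. By Lemma~\ref{CI} I may replace $x_\alpha$ by the constant $x$, giving $g_\alpha x\to y$ in $T$. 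Invoking the Cartan decomposition $G=CAC$ of Definition~\ref{def:qss}, I write $g_\alpha=c_\alpha a_\alpha c'_\alpha$ and pass to a subnet with $c_\alpha\to c$, $c'_\alpha\to c'$ in the compact set $C$ and $a_\alpha\to\infty$ in $A$. Using joint continuity of the action for both $T$ and $T_S$ together with Lemma~\ref{CI} to peel off the convergent compact factors, I obtain $a_\alpha x_1\to x_2$ in $T$, where $x_1=c'x$ and $x_2=c^{-1}y$.

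The goal is then to produce a non-compact normal subgroup of $G$ fixing $x_2$, which contradicts the hypothesis (if $G$ happens to be compact the theorem is immediate from Theorem~\ref{CU}(3)). By the second axiom in Definition~\ref{def:qss}, after a further subnet $a_\beta$ I may assume $U_+:=U^{(a_\beta)}_+$ is not precompact and $\langle U^{(a_\beta)}_+,U^{(a_\beta)}_-,U^{(a_\beta)}_0\rangle$ is dense in $G$, the relation $a_\beta x_1\to x_2$ being inherited. Applying the Mautner variant Lemma~\ref{mautner} to the net $a_\beta$ and the convergence $a_\beta x_1\to x_2$, every $g\in U_+$ (for which $a_\beta^{-1}ga_\beta\to e$) fixes $x_2$.

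The difficulty, which I expect to be the main obstacle, is that Mautner applied to $a_\beta$ yields only fixing by the contracting group $U_+$: the opposite group $U_-$ would naively fix the \emph{other} accumulation point $x_1$, and we have only the weak ($T$) convergence $a_\beta x_1\to x_2$, which cannot be inverted (inversion, Lemma~\ref{inverseseq}, requires $T_S$-convergence). This is precisely the purpose of the Zig-Zag Lemma~\ref{difference}: applied to $a_\beta$ it produces nets $n,n'$ with $nx_1\to x_2$ and $n'x_1\to x_2$ in $T$ and with the difference $b:=n^{-1}n'\to\infty$ in $A$, the two competing rates being designed to manufacture a single net $b$ that stabilizes one point from both sides. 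Feeding $b$ back into the qss axiom and Lemma~\ref{mautner} should then force both $U_+$ and $U_-$, and—using that $U_0$ normalizes $U_\pm$ (Lemma~\ref{U+-normal}) together with a boundedness argument for $U_0$—also $U_0$, to fix one common point $z$. Converting the one-sided weak stabilization into a genuine two-sided one, and disposing of $U_0$, is where I expect the real work to be. Once all three groups fix $z$, their dense span fixes $z$, and since the stabilizer is closed all of $G$ fixes $z$; thus $G$ itself is a non-compact normal subgroup with a global fixed point, the desired contradiction.
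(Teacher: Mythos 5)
Your proof of the corollary itself is correct and is precisely the paper's: the paper's entire proof consists of applying Proposition~\ref{variant} with $T_S$ in the role of its $T$ and $T$ in the role of its $T'$, the required universal closedness being exactly the conclusion of Theorem~\ref{mainthm}. Your parenthetical remark about points being $T_S$-closed addresses a hypothesis of Proposition~\ref{variant} that the paper passes over in silence (it is satisfied in all of the paper's applications, where $S$ is separated).

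Since most of your proposal is a sketch of Theorem~\ref{mainthm} itself --- which the corollary may simply cite --- it is worth recording how the paper resolves the two difficulties you honestly flag, because your sketch genuinely stalls at both. First, the paper never attempts to zig-zag or invert the weak convergence $a_\beta x_1\to x_2$ in $T$. Instead (its Step 2) it uses Lemma~\ref{mautner} to get $U_+\subset G_{x_2}$, concludes that the stabilizer $G_{x_2}$ is non-compact, and then invokes Proposition~\ref{UCaction} to deduce that the action of $G$ on $(X,T_S)$ --- strong topology on \emph{both} sides --- is not universally closed. Step 1 is then rerun in the special case $T=T_S$, and only there are the Zig-Zag Lemma~\ref{difference}, Lemma~\ref{inverseseq} and Lemma~\ref{CI} combined, producing a net $a_{\beta'}\to\infty$ in $A$ with $(S\text{-}\lim)\,a_{\beta'}x=x$; since $T_S$-convergence, unlike $T$-convergence, can be inverted by Lemma~\ref{inverseseq}, Mautner then applies on both sides and yields $U_+,U_-\subset G_x$. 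Second, your endgame --- forcing $U_0$ also to fix the point so that the dense subgroup $\langle U_+,U_-,U_0\rangle$ fixes it --- is both out of reach of the Mautner mechanism (elements of $U_0$ have merely bounded conjugates, not contracting ones) and unnecessary. The paper sidesteps it: by Lemma~\ref{U+-normal}, $U_0$ normalizes $U_+$ and $U_-$, so the closed subgroup generated by $U_+\cup U_-$ is normalized by the dense subgroup $\langle U_+,U_-,U_0\rangle$ and hence, normalizers of closed subgroups being closed, is normal in $G$; it is non-compact because $U_+$ is not precompact, and it is contained in $G_x$, which contradicts the hypothesis directly without $U_0$ ever fixing anything.
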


In the special case where $T=T_S$ we obtain Theorem~\ref{demo} presented in the introduction.
Moreover, we get:

\begin{cor}\label{ref:Hausdorff}
With respect to the quotient topology induced from $T_S$, the orbit space $X/G$ is Hausdorff and completely regular.
\end{cor}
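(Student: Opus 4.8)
The plan is to treat the two assertions by different means: complete regularity comes for free from a uniform structure on the quotient, while Hausdorffness follows from the closedness of the orbit relation supplied by the main theorem.

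For complete regularity, I would apply Lemma~\ref{uniformquotient} to the equicontinuous action $G\act(X,S)$. It endows $X/G$ with the quotient uniform structure $\pi_*S$ and, crucially, identifies the topology $T_{\pi_*S}$ with the quotient topology $\pi_*T_S$ appearing in the statement. Since the topology underlying any uniform structure is completely regular (a classical fact: the uniformity is generated by a family of continuous pseudometrics, which separate a point from a disjoint closed set), the orbit space $X/G$ is completely regular.

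For Hausdorffness, I would invoke Theorem~\ref{mainthm}: the map $\phi\colon(g,x)\mapsto(x,gx)$ from $G\times(X,T_S)$ to $(X,T_S)\times(X,T)$ is universally closed, hence (taking the parameter space to be a point) closed, so its image is closed. But this image is exactly the orbit relation $R=\{(x,y):y\in Gx\}$. As $T$ is weaker than $T_S$, the identity $(X,T_S)\to(X,T)$ is continuous, so closedness of $R$ in $(X,T_S)\times(X,T)$ forces closedness in $(X,T_S)\times(X,T_S)$ as well; in the special case $T=T_S$ this is immediate, and one may instead quote Proposition~\ref{UCaction} verbatim. I would then repeat the open-map argument of Proposition~\ref{UCaction}: continuity of $G\act(X,T_S)$ makes each group element a homeomorphism, so $\pi\colon(X,T_S)\to X/G$ and hence $\pi\times\pi$ are open; the image under $\pi\times\pi$ of the open set $(X\times X)\setminus R$ is then open and equals the complement of the diagonal in $X/G\times X/G$, whence the diagonal is closed and $X/G$ is Hausdorff.

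Most of this is point-set bookkeeping; the one ingredient I would single out is the standard theorem that a uniformizable topology is completely regular, on which the first half rests, together with the small observation that closedness of $R$ in the mixed product already yields closedness in $(X,T_S)\times(X,T_S)$. I do not anticipate a genuine obstacle, since both halves are direct consequences of Lemma~\ref{uniformquotient}, Theorem~\ref{mainthm}, and Proposition~\ref{UCaction}.
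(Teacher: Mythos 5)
Your proof is correct, but it reaches Hausdorffness by a different mechanism than the paper. The complete regularity half is identical: both you and the paper invoke Lemma~\ref{uniformquotient} and the classical fact that a uniformizable topology is completely regular. For Hausdorffness, however, the paper stays inside the uniform category: since each orbit is closed (Corollary~\ref{orbits}), the quotient is $T_1$, and a uniformizable $T_0$ space is automatically Hausdorff; the paper also sketches a direct separation argument, using a $G$-invariant uniformity $U$ with $UU(x)\subset V$ to produce disjoint saturated open sets $GU'(x)$ and $GU'(y)$. You instead use that the image of $\phi$, i.e.\ the full orbit relation $R=\mathrm{Im}(\phi)$, is closed, and then rerun the open-map/closed-diagonal argument of Proposition~\ref{UCaction}. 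Your route exploits a stronger output of Theorem~\ref{mainthm} (closedness of the whole relation $R$, not merely of each single orbit) but needs nothing from the quotient uniform structure beyond complete regularity; the paper's route needs only orbit closedness, at the price of leaning on the quotient uniformity $\pi_*S$ for the separation. Two assertions in your argument deserve a line of justification, though both are true: (i) that $T$ is weaker than $T_S$ — this follows from $S$-compatibility, since for $V\in T$ and $y\in V$ one may choose $y\in V'\in T$ and $U\in S$ with $UV'\subset V$, whence $U(y)\subset UV'\subset V$ and $V\in T_S$ — and this is exactly what lets you pass closedness of $R$ from $(X,T_S)\times(X,T)$ to $(X,T_S)\times(X,T_S)$; (ii) that $\pi$ is open, which holds because each $g\in G$ acts as a $T_S$-homeomorphism, so $\pi^{-1}(\pi(O))=GO$ is open.
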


By Lemma~\ref{uniformquotient}, $X/G$ admits a uniform structure, hence it is Hausdorff and completely regular given that it is $T_0$,
but it is $T_1$ by the above discussion. To see directly the Hausdorff property of $X/G$,
consider two points $x,y$ which do not belong to the same orbit. Since $Gy$ is closed, we have an open neighborhood $V$ of $x$ which is disjoint from $Gy$. Consider a $G$-invariant uniformity $U$ such that $UU(x)\subset V$ and pick a symmetric uniformity $U'$ contained in $U$. It is easy to verify that the open sets $GU'(x)$ and $GU'(y)$ are disjoint.


%
\begin{proof}[Proof of Theorem \ref{mainthm}]
By way of contradiction we assume that the map $\phi$ is not universally closed and show eventually the existence of a point fixed by some non-compact normal subgroup of $G$. The proof consists of four steps.  


Throughout the proof we let $A<G$ be the subgroup guaranteed by the qss assumption, and let $C$ be a compact subset of $G$ such that $G=CAC$.

\medskip

\noindent
{\em Step 1: There exist points $x,y \in X$ and a net $a_\alpha\in A$ satisfying $a_\alpha\to \infty$ and $(T\text{-}\lim) a_\alpha x=y$.}

\medskip

In view of Theorem \ref{CU}, the assumption that $\phi$ is not universally closed is equivalent to the existence of a directed set $(\alpha)$ and a net $(g_\alpha,x_\alpha)$ which has no converging subnet, such that the net
$(x_\alpha,g_\alpha x_\alpha)$ converges in the $T_S\times T$-topology.

Let $g_\alpha=c_\alpha a_\alpha c'_\alpha$ be a corresponding $CAC$ expression of the elements $g_\alpha$.
Upon passing to a subnet we may assume that both $c_\alpha$ and $c'_\alpha$ converge in $C$. Note that necessarily $a_\alpha$ has no converging subnet in $A$, that is $a_\alpha \to \infty$.

Denote 
$$
 c=\lim c_\alpha~\text{and}~c'=\lim c'_\alpha,
$$
and set
$$
 x=c' (S\text{-}\lim) x_\alpha~\text{and}~y=c^{-1} (T\text{-}\lim) g_\alpha x_\alpha.
$$
Since $G$ acts continuously on $(X,T_S)$, we have
\[ 
 (T_S\text{-}{\lim}) c'_\alpha x_\alpha = x.
\]
Since $G$ acts continuously on $(X,T)$, we have
\[ 
 (T\text{-}{\lim}) a_\alpha c'_\alpha x_\alpha=(T\text{-}{\lim})  c^{-1}_\alpha \cdot g_\alpha x_\alpha=
\lim c_\alpha^{-1} \cdot (T\text{-}{\lim}) g_\alpha x_\alpha = c^{-1} (T\text{-}{\lim}) g_\alpha x_\alpha =y. \]
Applying Lemma~\ref{CI} to the net $a_\alpha$ in $G$ and the net $c'_\alpha x_\alpha$ which $T_S$-converges to $x$ in $X$,
we deduce that $y= (T\text{-}\lim) a_\alpha x$.

\medskip
\noindent
{\em Step 2 (reducing to the case $T=T_S$): The action of $G$ on $(X,T_S)$ is not universally closed.}

\medskip

By Step 1, and by the second property in Definition \ref{def:qss}, replacing the net $(a_\ga)$ by a subnet $(a_\gb)$, we have in addition to 
\begin{itemize}
\item $a_\gb\to \infty$ and
\item $(T\text{-}\lim) a_\gb x=y$,
\end{itemize}
that 
\begin{itemize}
\item $U^{(a_\gb)}_+$ is not precompact.
\end{itemize}
For $g\in U^{(a_\beta)}_+$ we have 
$\lim g^{a_\beta^{-1}} =1$, hence by Lemma~\ref{mautner}, $gy=y$.
Thus the stabilizer of $y$ is non-compact.
By Proposition~\ref{UCaction} it follows that the action of $G$ on $(X,T_S)$ is not universally closed.


\medskip
\noindent
{\em Step 3: There exist a point $x \in X$ and a net $a_{\beta'}\in A$ satisfying $a_{\beta'}\to \infty$ and $(T_S\text{-}\lim) a_{\beta'} x=x$.}

\medskip

By Step 2 we know that the map
\[ G\times (X,T_S) \to (X,T_S) \times (X,T_S), \quad (g,x)\mapsto (x,gx) \]
is not universally closed.
We thus may apply Step 1 in the special case $T=T_S$ and obtain points $x,y \in X$ and a net $a_\alpha\in A$ satisfying $a_\alpha\to \infty$ and 
$(T_S\text{-}\lim) a_\alpha x=y$.
By Lemma~\ref{difference},
there exists a directed set $(\beta')$ and two nets $n,n':(\beta')\to A$ satisfying
$n(\beta')x\to y$ and $n'(\beta')x \to y$ in $X$ (all limits in $X$ here are with respect to $T_S$) and $n(\beta')^{-1}n'(\beta') \to \infty$ in $A$.
By Lemma~\ref{inverseseq}, $n(\beta')^{-1} y \to x$.
Applying Lemma~\ref{CI} (in the special case $T=T_S$)
with respect to the directed set $(\beta')$, the net $n'(\beta')x$ in $X$
and the net $n(\beta')^{-1}$ in $A$, 
we conclude that $n(\beta')^{-1}n'(\beta')x \to x$.
We are done by setting $a_{\beta'}=n(\beta')^{-1}n'(\beta')$.

\medskip
\noindent
{\em Step 4: There exists a point in $X$ which is fixed by a non-compact normal subgroup of $G$.}

\medskip

We let $x$ be a point as obtained in Step 3. We will show that its stabilizer $G_x$ contains a normal non-compact subgroup of $G$.
By replacing the net obtained in Step 3 by a subnet, using the qss second axiom we get a net $(a_{\ga'})$ in $A$ satisfying the following properties:
\begin{itemize}
\item  $a_{\ga'}\to \infty$.
\item $(T_S\text{-}\lim) a_{\ga'} x=x$.
\item $U^{(a_{\ga'})}_+$ is not precompact.
\item The group generated by the three groups
$U^{(a_{\ga'})}_+$, $U^{(a_{\ga'})}_-$ and $U^{(a_{\ga'})}_0$ is dense in $G$.
\end{itemize}

In view of Lemma~\ref{mautner}, $U^{(a_{\ga'})}_+<G_x$.
Moreover, by Lemma~\ref{inverseseq} we also have 
$$
 (T_S\text{-}\lim) a_{\ga'}^{-1} x=x,
$$ 
which by Lemma~\ref{mautner} gives $U^{(a_{\ga'})}_-<G_x$.
By Lemma~\ref{U+-normal}, the closed group generated by the subgroups $U^{(a_{\ga'})}_+$ and $U^{(a_{\ga'})}_-$ is normal in $G$.
It is non-compact as $U^{(a_{\ga'})}_+$ is not precompact.
We conclude that $G_x$ contains a normal non-compact subgroup of $G$, completing the argument by contradiction.
\end{proof}


\section{Images of homomorphisms}\label{sec:topologies}\label{sec:closed-image}

The main result of this section is the following theorem, whose proof will be completed at the end of the section.

\begin{thm}\label{thm:image}
Let $G$ be a qss group and $H$ an arbitrary Hausdorff topological group.
Let $\phi:G\to H$ be a continuous homomorphism. Then $\phi(G)$ is closed.
If further $G$ is separable then the induced map $G/\ker(\phi) \to \phi(G)$ is a homeomorphism and in particular $\phi(G)$ is locally compact.
\end{thm}

We know by Theorem~\ref{thm:ss-qss} that every locally compact image of a semisimple group is qss. 
Applying the above theorem we obtain:

\begin{cor}\label{cor:imagess}
Let $G$ be a semisimple analytic group with a finite center (the $k$ points of a Zariski connected semisimple algebraic  group $\mathbf{G}$, defined over a local field $k$).
Let $H$ be a Hausdorff topological group.
Let $\phi:G\to H$ be a continuous homomorphism. Then $\phi(G)$ is closed in $H$ and the induced map $G/\ker(\phi) \to \phi(G)$ is a homeomorphism.
In particular, $\phi(G)$ is locally compact and it is also qss.
\end{cor}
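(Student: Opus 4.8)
The plan is to reduce to the injective case already settled by Theorem~\ref{thm:image}. Since $\phi$ is not assumed injective, I would first factor it through its kernel. Because $H$ is Hausdorff the singleton $\{e_H\}$ is closed, and since $\phi$ is continuous the kernel $N=\ker(\phi)=\phi^{-1}(\{e_H\})$ is a closed normal subgroup of $G$. Hence the quotient $G/N$, equipped with the quotient topology, is a Hausdorff (indeed locally compact) topological group, and $\phi$ descends to a homomorphism $\bar\phi:G/N\to H$ with $\phi=\bar\phi\circ\pi$, where $\pi:G\to G/N$ is the canonical (continuous, open) projection. By construction $\bar\phi$ is continuous and injective, and $\bar\phi(G/N)=\bar\phi(\pi(G))=\phi(G)$.

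The second step is to check that $G/N$ is qss, which is what allows Theorem~\ref{thm:image} to be applied to $\bar\phi$. Here I would invoke Theorem~\ref{thm:ss-qss}: since $G=\mathbf{G}(k)$ for a connected semisimple algebraic group $\mathbf{G}$ over the local field $k$, the quotient map $\pi:\mathbf{G}(k)\twoheadrightarrow G/N$ is a continuous surjection, so $G/N$ is qss. This is precisely the statement, recorded in the remark preceding the corollary, that every factor group of a semisimple group is qss.

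With $G/N$ qss and $\bar\phi$ a continuous injective homomorphism, Theorem~\ref{thm:image} yields that $\bar\phi(G/N)$ is closed in $H$. Since $\bar\phi(G/N)=\phi(G)$, the image $\phi(G)$ is closed, which is the assertion.

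I expect the only genuinely substantive point to be the claim that $G/N$ is qss, i.e. the appeal to Theorem~\ref{thm:ss-qss}. The subtlety, emphasized in the discussion preceding Definition~\ref{def:qss}, is that the qss property need not be inherited by an arbitrary factor group: passing to a quotient could in principle render the contracted subgroup $U_+^{(a_\beta)}$ precompact, violating the second axiom. It is exactly the structure theory of semisimple groups — the decomposition $\mathbf{G}(k)=\mathbf{G}(k)^+\cdot Z_{\mathbf{G}}(\mathbf{S})(k)$ together with a root space argument ensuring non-precompactness of $U_+$ persists in the quotient — that excludes this pathology. Everything else (closedness of $N$, the factorization through $G/N$, and the identification $\bar\phi(G/N)=\phi(G)$) is formal.
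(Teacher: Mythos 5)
Your proposal is correct and is exactly the paper's argument: the authors also factor $\phi$ through $G/\ker(\phi)$, invoke Theorem~\ref{thm:ss-qss} to conclude that this factor group is qss, and then apply Theorem~\ref{thm:image} to the induced continuous injective homomorphism. You have merely spelled out the formal steps (closedness of the kernel, the factorization, the identification of images) that the paper leaves implicit, and you correctly isolate the qss property of the quotient as the only substantive input.
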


Another corollary of Theorem~\ref{thm:image} regards minimality of group topologies.
Given a group $G$, a group topology on $G$ is a topology with respect to which $G$ becomes a topological group.
We say that a Hausdorff topological group $G$ is topologically-minimal if there are no weaker Hausdorff group topologies on $G$. 

\begin{cor}\label{weakest-top}
Every factor group of a separable qss group is topologically-minimal.
\end{cor}

\begin{proof}
Let $G$ be a separable qss group and let $N\lhd G$ a closed normal subgroup. Denote the quotient topology on $G/N$ by $T$. 
Let $T'\subset T$ be a Hausdorff group topology on $G/N$.
By setting $H=(G/N,T')$ and applying Theorem~\ref{thm:image} we conclude that $T'=T$.
\end{proof}

\subsection{Closed images}

We first prove the first part of Theorem~\ref{thm:image}.
 
\begin{prop}\label{prop:image}
Let $G$ be a qss group, $H$ an arbitrary Hausdorff topological group and $\phi:G\to H$ a continuous injective homomorphism. Then $\phi(G)$ is closed in $H$.
\end{prop}

\begin{proof}
Set $X=H$ and consider the left $G$ action on $X$.
Endow $H$ with the left uniform structure described in Example~\ref{uniform-group}.
This uniform structure is invariant for the left regular action of $H$, and in particular under the $G$ action,
thus the assumptions of Theorem \ref{mainthm} hold.
By Corollary~\ref{orbits} the $G$-orbits are closed. Since the image of $\phi$ coincides with the orbit of the identity $1_H$, the proposition is proved.
\end{proof}

Note that a similar theorem was proven by Omori~\cite{omori} for a class of connected Lie groups, including all connected semisimple Lie groups with finite center, under the assumption that the target group $H$ satisfies the first axiom of countability.

\subsection{Group topologies}

In this subsection we set some preliminaries regarding group topologies.
If $T$ is a group topology on $G$, setting 
$T(e)=\{U\mid e\in U \in T\}$
where  $e\in G$ denotes the identity element,
it is standard that $T=\{gU\mid g\in G,~U\in T(e)\}$ and
\begin{enumerate}
\item For all $U\in T(e)$ there exists $V\in T(e)$ such that $V\cdot V\subset U$.
\item For all $U\in T(e)$ and $g\in G$ also $U^{-1}, U^g\in T(e)$.
\end{enumerate}
The following lemma is straightforward and we leave its verification to the reader.

\begin{lem}\label{lem:5.5}
Let $T$ be a group topology on $G$ and $A\subset G$ a dense subset.
Assume $C\subset T(e)$ is a collection satisfying
\begin{enumerate}
\item For all $U\in C$ there exists $V\in C$ such that $V\cdot V\subset U$.
\item For all $U\in C$ and $g\in A$ also $U^{-1}, U^g\in C$.
\end{enumerate}
Then the topology generated by the collection 
$\{gU\mid g\in A,~U\in C\}$ is a group topology on $G$ (included in $T$).
\end{lem}

A topology is said to be countably generated (or second countable) if it is generated as a topology by a countable sub-collection.
The following proposition is a useful step in the proof of the second part of Theorem ~\ref{thm:image}.

\begin{prop} \label{prop:separable}
Let $G$ be a group and $T$ be a separable group topology on $G$.
Let $D\subset T$ be a countable sub-collection. Then there exists a group topology $T'\subset T$ which is countably generated and such that $D\subset T'$.
\end{prop}

\begin{proof}
Choose a countable dense subset $A\subset G$.
For each element $U\subset D$ choose $g_U\in A\cap U$ and set $C_1=\{g_U^{-1}U\mid U\in D\}$.
Clearly $C_1\subset T(e)$ is countable and $D\subset \{gU\mid g\in A,~U\in C_1\}$.
We construct countable collections $C_n\subset T(e)$ inductively as follows: 
given $C_n$, for each $U\in C_n$ we choose $V_U\in T(e)$ such that $V_U\cdot V_U\subset U$ and set
\[ C_{n+1}=\{V_U\mid U\in C_n\}\cup \{U^{-1}\mid U\in C_n\} \cup \{U^g\mid U\in C_n,~g\in A\}.\]
Setting $C=\cup C_n$, it follows from Lemma \ref{lem:5.5} that the topology generated by the countable collection $\{gU\mid g\in A,~U\in C\}$ is a group topology on $G$,
which contains $D$.
\end{proof}

\subsection{Completions}

Given a uniform space, as in the case of a metric space, one can consider Cauchy nets, that is nets $(x_\alpha)$ such that for every uniformity $U$ there exists some $\alpha_U$ such that for every $\alpha,\alpha' \geq \alpha_U$, $(x_\alpha,x_\alpha')\in U$.
The uniform space is said to be complete if every Cauchy net converges.
Any uniform space could be completed
by adding ideal points: equivalence classes of Cauchy nets. Two Cauchy nets $(x_\ga)$ and $(y_\gb)$ are equivalent if for any uniformity $U$ there are $\ga_U$ and $\gb_U$ such that for every $\ga\ge\ga_U$ and $\gb\ge\gb_U$, $(x_\ga,y_\gb)\in U$. 

Fix a topological group $G$ and recall the definition of the left uniform structure given in  Example~\ref{uniform-group}.
If it is complete than $G$ is said to be Weil-complete (or left-complete).
Other uniform structures of interest on a topological group are the right uniform structure, defined similarly to the left one, and the two sided uniform structure, which is the finest uniform structure contained in both the left and the right ones.
A group is said to be Raikov-complete if its two sided uniform structure is complete.
It is known (see e.g \cite[Chapter~3.6]{ArTk}) that a Weil-complete group is also Raikov-complete (and right-complete).

\begin{prop} \label{prop:weil}
Let $G$ be a qss group. Let $T$ be group topology on $G$ included in the original topology of $G$.
Then $(G,T)$ is Weil-complete. In particular it is Raikov-complete.
\end{prop}

\begin{proof}
Set $X$ to be the completion of $G$ with respect to the left uniform structure associated with the group topology $T$ and note that the left action of $G$ on itself extends to an action of $G$ on $X$ which satisfies the assumptions of Theorem \ref{mainthm}. Thus $G\subset X$ is a closed $G$-orbit. Since $G$ is dense in $X$, we conclude that $X=G$, thus $G$ is Weil-complete.
As remarked above, it follows that $G$ is also Raikov-complete.
An alternative way to see that $G$ is Raikov-complete would be by repeating the argument taking $X$ to be the completion of $G$ with respect to the two sided uniform structure.
\end{proof}

\subsection{Baire property}

Recall that a topological group is said to be a Baire group if its underlying topological space is a Baire space, that is the Baire Category Theorem holds true.

A uniform structure is said to be countably generated if it contains a countable collection of uniformities which is not contained in any proper sub-uniform-structure. 
It is a standard fact that a countably generated uniform structure is given by an equivalent pseudo metric on the space, which is a true metric iff the generating uniformities separate points. 
The proof of the following Lemma is an obvious adjastment of the standard proof that a complete metric space is Baire.
Alternatively, one can reduce it to the metric case by introducing an equivalent metric.

\begin{lem} \label{lem:baire}
The underlying topological space of a complete, countably generated uniform space is Baire.
\end{lem}

Noticing that the left and right uniform structures of a topological group are countably generated iff the topology is first countable (the identity has a countable basis of neighborhoods)
we get:

\begin{cor} \label{cor:baire}
A Raikov-complete first countable group is Baire. In particular, a Weil-complete first countable is Baire.
\end{cor}

The final step in the proof of Theorem~\ref{thm:image} is given by:

\begin{prop}\label{prop:baire}
Let $G$ be a separable qss group. Let $T$ be a group topology on $G$ included in the original topology of $G$.
Then $(G,T)$ is a Baire group.
\end{prop}

\begin{proof}
Let $U_n$ be a countable collection of open dense sets in $T$ and denote by $V$ the complement of the $T$-closure of their intersection.
Set $D=\{V\}\cup\{U_n\mid n\}$ and use Proposition~\ref{prop:separable} to find a first countable group topology $T'$ on $G$ such that $D\subset T'\subset T$.
Note that $(G,T')$ is Weil-complet by Proposition~\ref{prop:weil}. We deduce that $V\ne G$ by the fact that $(G,T')$ is a Baire group which follows from Corollary~\ref{cor:baire}.
\end{proof}

The above result is related to \cite[Corollary~3.3]{DU98}.

\subsection{Proof of {Theorem~\ref{thm:image}}}

In view of Proposition~\ref{prop:image} we only need to verify the last sentence,
namely that, assuming $G$ is separable, the induced map $G/\ker(\phi) \to \phi(G)$ is a homeomorphism.
Note that $G$, being seprable, is $\sigma$-compact.
Note further that $\phi(G)$ has the Baire property, by Proposition~\ref{prop:baire}.
In such a case it is well known that $\phi:G\to \phi(G)$ is open, and the proof follows.

\section{Measurable metrics and metric ergodicity} \label{sec:ME}

Theorem~\ref{mainthm} and Corollary~\ref{orbits} (1) could be applied in the case where $X$ is a metric space, taking the metric uniform structure and $T=T_S$.
We obtain:

\begin{thm} \label{thm:iso}
Let $G$ be a quasi-semisimple group.
Assume that $G$ acts isometrically and continuously on a metric space $X$
and suppose that no non-compact normal subgroup of $G$ admits a global fixed point in $X$.
Then the $G$-orbits are closed in $X$ and the points stabilizers are compact in $G$.
\end{thm}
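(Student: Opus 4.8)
The plan is to derive this statement as a direct specialization of Theorem~\ref{mainthm} together with Corollary~\ref{orbits}, exactly as anticipated in the sentence preceding the statement. First I would equip $X$ with its canonical metric uniform structure $S$, namely the structure generated by the entourages $U_\epsilon = \{(x,y) : d(x,y) < \epsilon\}$ for $\epsilon > 0$. The first routine check is that the associated $S$-topology $T_S$ coincides with the metric topology: the basic $T_S$-open sets are the sets $U_\epsilon(x)$, which are precisely the open metric balls about $x$. With this identification, I would set $T = T_S$, so that $T$, $T_S$, and the metric topology all agree.

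Next I would verify the hypotheses of Theorem~\ref{mainthm}. The essential point is equicontinuity: since $G$ acts by isometries, each entourage $U_\epsilon$ is itself $G$-invariant, so the set $\{(u,v) : \forall g \in G,\ (gu,gv) \in U_\epsilon\}$ equals $U_\epsilon$ and lies in $S$. Thus $S$ admits a basis of $G$-invariant entourages, which is exactly the equicontinuity condition. Continuity of the action with respect to both $T$ and $T_S$ is immediate, as these topologies coincide with the metric topology and the action is assumed continuous. The standing hypothesis that no non-compact normal subgroup of $G$ has a global fixed point is carried over verbatim.

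Having checked the hypotheses, Theorem~\ref{mainthm} applies and shows that the map $\phi : G \times (X,T_S) \to (X,T_S) \times (X,T)$ is universally closed. I would then invoke Corollary~\ref{orbits} to conclude immediately that the point stabilizers in $G$ are compact and that the $G$-orbits are $T$-closed, which under our identification means closed in the metric topology. This gives both assertions of the theorem.

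I do not expect a genuine obstacle here, since the substantive content lies entirely in Theorem~\ref{mainthm}; the work of this proof is purely the bookkeeping that the metric uniform structure produces an equicontinuous action with $T_S$ equal to the metric topology. The only step warranting a moment of care is confirming that isometries make the metric entourages literally $G$-invariant, rather than invariant merely up to refinement, as this is precisely what allows equicontinuity to hold on the nose and lets the hypotheses of Theorem~\ref{mainthm} be met without further adjustment.
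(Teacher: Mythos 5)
Your proposal is correct and is exactly the paper's own argument: the paper derives Theorem~\ref{thm:iso} by applying Theorem~\ref{mainthm} and Corollary~\ref{orbits} to the metric uniform structure with $T=T_S$, precisely as you do. The verifications you spell out (isometries fix the entourages $U_\epsilon$ on the nose, hence equicontinuity, and $T_S$ is the metric topology) are the routine checks the paper leaves implicit.
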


The following theorem has many ergodic theoretical applications.

\begin{thm} \label{ME} \label{thm6.2}
Let $G$ be a Zariski connected, semisimple analytic group with a finite center (the $k$ point of a Zariski connected semisimple algebraic  group $\mathbf{G}$, defined over a local field $k$).
Let $H<G$ be a closed subgroup.
Suppose that $G/H$ admits a $G$-invariant, separable, measurable metric.
Then $H$ contains a factor of $G$ as a cocompact subgroup.
\end{thm}

In case the metric is continuous, this theorem is an immediate application of Theorem~\ref{thm:iso}.
Indeed, the associated uniform structure on $G/H$ is $G$-invariant and continuous.
Replacing $G$ by $G/N$ where $N$ is the action kernel, using the fact that $G/N$ is qss (Theorem~\ref{thm:ss-qss}) we see that $H$, being the stabiliser of a point, must be compact.
The fact that the theorem applies also for measurable metrics is a consequence of the following:



\begin{lem} \label{measd}
Let $G$ be a locally compact group and $H<G$ a closed subgroup.
Denote by $T$ the standard topology on $G/H$.
Let $d$ be a $G$-invariant, separable, measurable metric on $G/H$.
Then $d$ is $T$-continuous.
\end{lem}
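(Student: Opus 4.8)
The plan is to fix the base point $o=eH\in G/H$ and encode the metric through the single function $f\colon G\to\BR_{\ge 0}$, $f(g)=d(go,o)$. By $G$-invariance of $d$ one has $d(g_1o,g_2o)=f(g_2^{-1}g_1)$ for all $g_1,g_2$, so $f$ is right-$H$-invariant, symmetric ($f(g^{-1})=f(g)$) and subadditive ($f(g_1g_2)\le f(g_1)+f(g_2)$). Since $d\circ(\pi\times\pi)$ equals the map $(g_1,g_2)\mapsto f(g_2^{-1}g_1)$ and $\pi\times\pi$ is an open (hence quotient) map, continuity of $d$ on $(G/H,T)\times(G/H,T)$ is equivalent to continuity of $f$ on $G$. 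Moreover the estimate $|f(g_1)-f(g_2)|\le d(g_1o,g_2o)=f(g_2^{-1}g_1)$ shows that $f$ is continuous everywhere as soon as it is continuous at $e$. Thus the whole first assertion reduces to proving $\lim_{g\to e}f(g)=0$, i.e.\ that each sublevel set $B_r:=\{g\in G: f(g)<r\}$ is a neighbourhood of $e$.

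For this I would run a Steinhaus--Weil argument. First, $f$ is measurable because $d$ is a measurable metric and $\pi$ is continuous, so each $B_r$ is Haar-measurable. Subadditivity and symmetry give $B_{r/2}^{-1}B_{r/2}\subseteq B_r$. To see that $B_{r/2}$ has positive Haar measure, choose a countable $d$-dense set $\{x_n\}$, write $x_n=z_no$ (using separability); the balls $B_d(x_n,r/2)$ cover $G/H$, and their $\pi$-preimages are exactly the left translates $z_nB_{r/2}$, so $G=\bigcup_n z_nB_{r/2}$. Fixing an open $\sigma$-compact subgroup $G_0<G$, which has positive Haar measure, at least one set $z_nB_{r/2}\cap G_0$ is non-null, whence $B_{r/2}$ itself is non-null. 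The Steinhaus--Weil theorem then gives that $B_{r/2}^{-1}B_{r/2}$, and a fortiori $B_r$, contains a neighbourhood of $e$. This proves $T$-continuity of $d$; note it uses only local compactness of $G$, consistent with the first assertion holding without the $\sigma$-compactness hypothesis.

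Once $d$ is $T$-continuous we have $T_d\subseteq T$ (every $d$-ball is $T$-open), and on any $T$-compact set the identity map $(G/H,T)\to(G/H,T_d)$ is a continuous bijection from a compact space to a Hausdorff one, hence a homeomorphism, so $T$ and $T_d$ agree on compacta. For the remaining inclusion $T\subseteq T_d$, assuming $G$ is $\sigma$-compact, the plan is to show that the orbit map $p\colon G\to(G/H,T_d)$, $g\mapsto go$, is open; by homogeneity it suffices to show that $p(W)$ is a $T_d$-neighbourhood of $o$ for every neighbourhood $W$ of $e$, which is exactly the statement that $T$-neighbourhoods of $o$ are $T_d$-neighbourhoods. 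Writing $G$ as a countable union of compacts $K_n$, the space $(G/H,T_d)$ becomes a countable union of the $T_d$-compact sets $p(K_n)$, and a Baire-category argument (the open mapping theorem for transitive continuous actions of $\sigma$-compact locally compact groups, in the spirit of Effros) then yields openness of $p$ and hence $T=T_d$.

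I expect this last Baire-category step to be the main obstacle: it is precisely here that $\sigma$-compactness is used, and one must ensure that the metric quotient $(G/H,T_d)$ is large enough (a Baire space) for the category argument to bite---equivalently, that the $T_d$-compact pieces $p(K_n)$ cannot all be $T_d$-nowhere dense---so that some $p(K_n)$ has nonempty $T_d$-interior, which can then be translated via the transitive $G$-action to produce a genuine $d$-ball inside $p(W)$. Passing to the completion of $(G/H,T_d)$, which is Polish and carries a continuous isometric $G$-action with dense orbit, is the natural device for extracting this interior, and verifying it cleanly in the present generality is the delicate point.
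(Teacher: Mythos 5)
Your proof of the first assertion is correct and is essentially the paper's own argument: both are Steinhaus--Weil proofs in which separability yields a countable cover of $G$ by the left translates $z_n\pi^{-1}\bigl(B_d(o,r/2)\bigr)$, so that one of these --- and hence, by left invariance of Haar measure, $\pi^{-1}\bigl(B_d(o,r/2)\bigr)$ itself --- is non-null, and symmetry plus subadditivity (your $B_{r/2}^{-1}B_{r/2}\subseteq B_r$; the paper's $A=A^{-1}$ and $\pi(AA)\subset B(\epsilon)$) let Steinhaus--Weil produce an identity neighbourhood mapping into the $\epsilon$-ball. Your repackaging via $f(g)=d(go,o)$ is cosmetic, and your use of an open $\sigma$-compact subgroup $G_0$ is actually slightly more careful than the paper, which does not say where the positive-finite-measure set needed for Steinhaus--Weil comes from.

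For the second assertion, the gap you flagged is real and cannot be closed: that assertion is false as stated. Take $G=\BZ$, $H=\{0\}$ and $d(m,n)=|e^{i\alpha m}-e^{i\alpha n}|$ with $\alpha/2\pi$ irrational: this $d$ is $G$-invariant, separable and (trivially) measurable, and $G$ is $\sigma$-compact, yet $T_d$ --- the topology of a countable dense subset of the circle --- is strictly coarser than the discrete topology $T$. (For a connected example, let $G=\BR$ and pull back a translation-invariant metric on $\BT^2$ under an irrational winding $t\mapsto(e^{it},e^{i\alpha t})$.) These examples realize exactly the pathology you were worried about: $(G/H,T_d)$ is a countable union of nowhere dense compact sets, hence meager in itself and not a Baire space, and in its completion (the circle, resp.\ the torus) the $G$-orbit is meager, so no Baire/Effros-type open mapping argument can start; all such theorems require the \emph{target} to be Baire (locally compact or completely metrizable), which does not follow from the hypotheses here. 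So you should not fault yourself for not finishing this step: the paper's own ``proof'' of this half is the single sentence that it is ``a standard application of the Baire category theorem,'' and that sentence founders on precisely the obstruction you identified. (The failure here is the failure of topological minimality of $\BZ$ and $\BR$; for the qss groups the paper actually cares about, minimality does hold, by Theorem~\ref{weakest-top}.) None of this propagates further: the rest of the paper, e.g.\ Theorem~\ref{ME}, uses only the first assertion, that $d$ is $T$-continuous.
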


\begin{proof}
We will prove that $T_d\subset T$.
Let $\pi:G\to G/H$ be the quotient map. 
By the definition of the topology $T$ on $G/H$, $\pi$ is $T$-open, so it is enough to show that $\pi$ is $T_d$-continuous.
By $G$-invariance it is enough to show continuity at $e$.
Denote by $B(\epsilon)$ the $d$-ball of radius $\epsilon$ centered at $\pi(e)$.
We need to find for every $\epsilon>0$ an identity neighborhood $U$ in $G$ whose image is in $B(\epsilon)$.
For a given $\epsilon>0$ fix a countable cover of $G/H$ by balls of radius $\epsilon/2$.
At least one of the preimages of the balls is not Haar null, hence also the set $A=\pi^{-1}(B(\epsilon/2))$ is not null.
One easily checks that $A=A^{-1}$ and $\pi(AA)\subset B(\epsilon)$.
Moreover, it is well known that $AA^{-1}$ contains an identity neighborhood $U$, as desired.
\end{proof}


\begin{thm} \label{thm6.4}
Let $G$ be a semisimple analytic group with a finite center (the $k$ point of a Zariski connected semisimple algebraic  group $\mathbf{G}$, defined over a local field $k$).
Let $H<G$ be a closed subgroup.
Assume there exists a metric $d$ on $G$ which is separable, measurable, left $G$-invariant and right $H$-invariant.
Then $H$ is compact.
\end{thm}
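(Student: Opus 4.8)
The plan is to realize $H$ as a point stabilizer for an isometric $G$-action on $G/H$ and then invoke Theorem~\ref{thm:iso}, after first upgrading the measurable metric $d$ to a genuine compatible one. First I would apply Lemma~\ref{measd} with the trivial subgroup in place of $H$: since $d$ is a $G$-invariant (i.e.\ left $G$-invariant) separable measurable metric on $G=G/\{e\}$ and $G$ is $\sigma$-compact, the lemma yields $T_d=T$, so $d$ is continuous and induces the topology of $G$. In particular every closed coset $gH$ is $d$-closed.

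Next I would push $d$ down to $G/H$. Using the right $H$-invariance, the formula $\hat d(xH,yH):=\inf_{h\in H}d(x,yh)$ is independent of the chosen representatives and is symmetric; it satisfies the triangle inequality because, if $d(x,ya)$ and $d(y,zb)$ are nearly optimal, then $d(x,z(ba))\le d(x,ya)+d(y,zb)$ by right $H$-invariance. It is left $G$-invariant because $d$ is, and it is positive-definite because each coset $yH$ is $d$-closed. Since $\hat d$ is $1$-Lipschitz in $d$ it is continuous for the quotient topology, and the orbit maps $g\mapsto gH$ are $\hat d$-continuous, so by Lemma~\ref{Joint} the isometric action $G\act(G/H,\hat d)$ is continuous. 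Thus $(G/H,\hat d)$ is a metric $G$-space to which Theorem~\ref{thm:iso} applies, and $H$ is exactly the stabilizer of the base point $eH$.

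The remaining, and main, point is to verify the no-fixed-point hypothesis of Theorem~\ref{thm:iso}. For a normal subgroup $N\lhd G$, normality gives $g^{-1}Ng=N$, so $N$ fixes $gH$ if and only if $N\subseteq H$; hence the hypothesis is equivalent to the assertion that $H$ contains no non-compact normal subgroup of $G$. The hard part is to rule this out, and here the right $H$-invariance is decisive: if a non-compact normal subgroup $N\subseteq H$ existed, then $d|_N$ would be both left and right $N$-invariant, i.e.\ bi-invariant, and still compatible with the topology of $N$. Bi-invariance forces $d|_N$ to be conjugation-invariant, $d(e,g^{-1}xg)=d(e,x)$. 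But $N$ is non-compact semisimple, so it contains a split-torus element $a$ with $a^n\to\infty$ and a non-trivial unipotent $u\in U^{(a^n)}_+$, for which $a^{-n}ua^n\to e$. Continuity of $d|_N$ at $e$ then gives $d(e,u)=\lim_n d(e,a^{-n}ua^n)=d(e,e)=0$, so $u=e$, a contradiction.

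Therefore $H$ contains no non-compact normal subgroup of $G$, the hypotheses of Theorem~\ref{thm:iso} are satisfied, and the stabilizer $H$ of $eH$ is compact. I expect the delicate points to be purely formal (checking that $\hat d$ is a genuine metric and that the $G$-action on $(G/H,\hat d)$ is continuous), while the conceptual crux is the conjugation-invariance obstruction in the third paragraph, which is precisely a Mautner-type phenomenon and is where both the semisimplicity of $G$ and the right $H$-invariance of $d$ are genuinely used.
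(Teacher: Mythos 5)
Your proof is correct, and its skeleton coincides with the paper's: upgrade $d$ to a genuine compatible metric via Lemma~\ref{measd}, push it down to $G/H$ (your $\hat d(xH,yH)=\inf_{h\in H}d(x,yh)$ is literally the paper's $\bar d(xH,yH)=d(x,yH)$), and feed the resulting isometric action into the qss machinery. The genuine divergence is in the crux: how to exclude a non-compact normal subgroup of $G$ inside $H$. The paper does this with two further applications of its main theorem: it first invokes Theorem~\ref{ME} (whose proof passes to $G/N$, with $N$ the action kernel, and applies the machinery there), reducing to a factor $G_1\lhd G$ lying cocompactly in $H$; it then kills $G_1$ by a doubling trick, letting $G_1\times G_1$ act on $Y\cong G_1$ by left and right translations --- an isometric action precisely because $d|_{G_1}$ is bi-invariant --- so that Corollary~\ref{orbits}(1) forces the diagonal stabilizer, a copy of $G_1$, to be compact. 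You instead verify the fixed-point hypothesis of Theorem~\ref{thm:iso} directly for the action $G\act G/H$: bi-invariance of $d$ on $N\subseteq H$ gives conjugation-invariance, and the bare-hands Mautner computation $d(e,u)=d(e,a^{-n}ua^n)\to d(e,e)=0$ kills any element contracted by a divergent split-torus element. Your route is shorter (one application of Theorem~\ref{thm:iso}, no detour through Theorem~\ref{ME}) and makes completely transparent where right $H$-invariance enters; the paper's doubling trick buys freedom from root-theoretic input, staying within the qss axiomatics, and yields the structural Theorem~\ref{ME} as a byproduct.

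Two refinements are needed in your third paragraph, neither fatal. First, replace $N$ by its closure $\overline{N}$: this is harmless since $H$ is closed and point stabilizers of a continuous action on a metric space are closed, and it is sufficient because the proof of Theorem~\ref{mainthm} (Step~4) only ever produces \emph{closed} normal subgroups fixing a point; your contraction argument needs honest group elements $a$, $u$, which should be extracted from $\overline{N}$. Second, the justification ``$N$ is non-compact semisimple, so it contains a split-torus element $a$ with $a^n\to\infty$ and a nontrivial $u\in U^{(a^n)}_+$'' is a gloss: $\overline{N}$ need not itself be semisimple, and what is really being used is the classical structure theory of normal subgroups of $\mathbf{G}(k)$ --- a closed non-compact normal subgroup contains $\mathbf{G}_i(k)^+$ for some isotropic almost-simple factor $\mathbf{G}_i$ (Tits' simplicity theorem plus a commutator argument), and the relative root decomposition then supplies $a$ and $u$. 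This is standard and true, so it is an imprecision rather than a gap, but it is exactly the input that the paper's more axiomatic argument is designed to avoid. Note, finally, that your computation only requires $a\in H$: since $d(e,a^{-n}ua^n)=d(a^n,ua^n)=d(e,u)$ uses left $G$-invariance and right $H$-invariance only, the contracted element $u$ may be taken anywhere in $G$, which slightly lightens the structure theory you need from $\overline{N}$.
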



\begin{proof}
By Lemma~\ref{measd}, $d$ is continuous.
By Theorem~\ref{ME}, $H$ contains cocompactly a factor $G_1$ of $G$. 
Let $X$ be $G_1$ endowed with the induced metric.
$X$ is a $G_1\times G_1$-space for the left and right actions which preserve the metric.
It follows by Corollary \ref{orbits} (1) that the stabilizer of $e$, namely the diagonal copy of $G_1$, is compact.
It follows that $H$ is compact as well. 
\end{proof}

\begin{defn}
Let $G$ be a group. Let $X$ be a $G$-Lebesgue space, that is a standard Borel space endowed with a measure class, on which $G$ acts measurably, preserving the measure class.
The action of $G$ on $X$ is said to be {\em metrically ergodic} if for every separable metric space $U$ on which $G$ acts isometrically,
every $G$-equivariant measurable function from $X$ to $U$ is a.e. a constant.
\end{defn}

\begin{thm} \label{ME-struct} \label{thm6.6}
Let $G$ be a semisimple analytic group with a finite center (the $k$ point of a Zariski connected semisimple algebraic  group $\mathbf{G}$, defined over a local field $k$).
Let $H<G$ be a closed subgroup. 
Endow $G/H$ with the unique $G$-invariant Radon measure class.
Then $G/H$ is $G$-metrically ergodic if and only if 
the image of $H/G_1$ is not precompact in $G/G_1$ for every proper factor group $G_1\lhd G$.

An ergodic $G$-Lebesgue space $X$ is not metrically ergodic if and only if it is induced from an ergodic $H$-space,
for some closed subgroup $H<G$ which contains cocompactly a factor group $G_1\lhd G$ with $G/G_1$ non-compact.
\end{thm}

\begin{proof}
Let $G_1\lhd G$ be a proper normal subgroup and suppose that $H'=\overline{HG_1}/G_1$ is compact in $G'=G/G_1$.
Pick a positive function $f\in L^2(G')$ and average it over the right action by $H'$, using the Haar measure on $H'$.
The function obtained is $H'$-invariant, but not $G'$-invariant (as $G'$ is non-compact), thus provides a non-constant $G'$-equivariant map $G'/H'\to L^2(G')$.
Precomposing with the map $G/H\to G'/H'$ we disprove the metric ergodicity of $G/H$.

In addition, given a $G$-space $X$ of the form $X=\text{Ind}_H^G(X')$ where $X'$ is an $H$-space on which $H$ acts with co-compact kernel. Since $H$ contains the unimodular group $G_1$ as a cocompact subgroup, it must be unimodular as well, and the procedure above produces a non-constant $G$-map from $X$ to $L^2(G/H)$.

Let now $X$ be an ergodic $G$-Lebesgue space which is not metrically ergodic,
and let $\phi:X\to U$ be a $G$-equivariant map to a separable metric $G$-space.
Let $G_1$ be the maximal factor of $G$ for which the image of $X$ is essentially contained in $U^{G_1}$
and let $G'=G/G_1$.
By ergodicity of $X$ we assume as we may that $\phi(X)$ intersects nully the fixed points set of all proper factors of $G'$ in $U^{G_1}$.
Replacing $U$ with $U^{G_1}$ minus the union of these fixed points sets, we may assume that the action of $G$ on $U$ factors through $G'$
and that proper factors of $G'$ have no fixed points. 
By Corollary~\ref{ref:Hausdorff} $U/G'$ is Hausdorff. Hence by the ergodicity of $X$, $\phi(X)$ is essentially supported on a unique orbit, which we identify
with $G'/H'$ for some closed subgroup $H'<G'$. By Corollary~\ref{orbits}, $H'$ is compact in $G'$.
Letting $H$ be the preimage of $H'$ in $G$, we deduce that $X$ is induced from $H$.

In particular, it follows that if $X=G/H$ is $G$-metrically ergodic then
the image of $H$ is not precompact in $G/G_1$ for every proper factor group $G_1\lhd G$.
\end{proof}

The fact that metric ergodicity is preserved by a restriction to a lattice is general. 
We record it here for reference.

\begin{cor} \label{cor6.7}
Let $G$ be a semisimple analytic group with a finite center, and $\Gamma$ a lattice in $G$.
Then every metrically ergodic $G$-space $Y$ is also $\gC$-metrically ergodic.

In particular $\gC$ acts metrically ergodically on $G/H$ whenever $H\le G$ is a closed subgroup whose image in every proper quotient of $G$ is not pre-compact.
\end{cor}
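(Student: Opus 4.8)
The statement has two parts. The first is the general fact --- independent of the fine structure of $G$ --- that metric ergodicity of a $G$-action descends to any lattice $\Gamma<G$; the second is the concrete application, which will follow at once by combining the first part with the characterisation in Theorem~\ref{ME-struct}.

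For the descent, the plan is to \emph{co-induce} a putative obstruction. Suppose $\phi\colon Y\to U$ is a measurable $\Gamma$-equivariant map into a separable metric space $(U,d_U)$ on which $\Gamma$ acts isometrically; I must show $\phi$ is essentially constant. First I would build a separable metric $G$-space out of $U$ and $\Gamma$. Set
\[ V=\{\,f\colon G\to U \text{ measurable}\mid f(\gamma g)=\gamma\cdot f(g)\ \forall\gamma\in\Gamma,\ \text{a.e.\ }g\,\}/{\sim}, \]
the space of $\Gamma$-equivariant measurable maps modulo almost-everywhere equality, with $G$ acting by right translation $(g_0\cdot f)(g)=f(gg_0)$, which preserves the defining relation. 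Since $\Gamma$ is a lattice, $\Gamma\backslash G$ carries a $G$-invariant probability measure $\mu$; as $g\mapsto\min(d_U(f_1(g),f_2(g)),1)$ is left $\Gamma$-invariant it descends to $\Gamma\backslash G$, and I would put
\[ d_V(f_1,f_2)=\int_{\Gamma\backslash G}\min\bigl(d_U(f_1(g),f_2(g)),1\bigr)\,d\mu. \]
Using a measurable fundamental domain for $\Gamma$ in $G$ identifies $(V,d_V)$ with an $L^0$-space of maps from a probability space into $U$, hence it is separable; finiteness of $\mu$ makes $d_V$ finite, and the $G$-invariance of $\mu$ under right translation makes the $G$-action isometric.

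With $V$ in hand I would define $\Psi\colon Y\to V$ by $\Psi(y)(g)=\phi(gy)$. The $\Gamma$-equivariance of $\phi$ shows $\Psi(y)\in V$, and the identity $\Psi(g_0y)(g)=\phi(gg_0y)=\Psi(y)(gg_0)=(g_0\cdot\Psi(y))(g)$ shows $\Psi$ is $G$-equivariant; joint measurability of $(g,y)\mapsto\phi(gy)$ gives that $\Psi$ is measurable. Metric ergodicity of $G\curvearrowright Y$ then forces $\Psi$ to equal a single $f_0\in V$ almost everywhere, i.e.\ $\phi(gy)=f_0(g)$ for a.e.\ $(g,y)$ by Fubini. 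Fixing a good value $g_1$ makes $y\mapsto\phi(g_1y)$ essentially the constant $f_0(g_1)$, and since $y\mapsto g_1y$ preserves the measure class on $Y$ this forces $\phi$ itself to be essentially constant. Finally, for the ``in particular'' clause I would invoke Theorem~\ref{ME-struct}: its equivalence says that $G/H$ is $G$-metrically ergodic exactly when the image of $H$ in $G/G_1$ is not precompact for every proper factor $G_1\lhd G$, which is precisely the stated hypothesis on $H$, so $G/H$ is $G$-metrically ergodic and the descent just proved yields $\Gamma$-metric ergodicity. I expect the one genuinely technical point to be the construction of $V$ as a separable metric $G$-space with \emph{isometric} action: verifying separability and $G$-invariance of $d_V$ simultaneously is exactly where the lattice hypothesis enters, through the finite $G$-invariant measure on $\Gamma\backslash G$; everything else is formal.
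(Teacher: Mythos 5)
Your proposal is correct and follows essentially the same route as the paper: the paper also forms the co-induced space $L(G,U)^\Gamma$ of $\Gamma$-equivariant maps with a metric obtained by integrating a truncated version of $d_U$ over $\Gamma\backslash G$, defines $\psi(y)(g)=\phi(gy)$, and applies $G$-metric ergodicity of $Y$, with the ``in particular'' clause read off from Theorem~\ref{ME-struct}. The only differences are cosmetic: the paper uses an $L^2$-type metric after replacing $d$ by $\min\{d,1\}$ and concludes by noting the essential value is a right-$G$-invariant, hence constant, function, whereas you use an $L^1$-type truncated metric and finish with Fubini.
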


\begin{proof}
Assume that $\phi:Y\to U$ is a $\Gamma$-equivariant measurable map into a separable metric space on which $\Gamma$ acts isometrically.
Replacing if necessary the metric $d$ on $U$ by $\min\{d,1\}$ we assume that $d$ is bounded.
Consider the space of $\Gamma$-equivariant measurable maps, defined up to null sets, $L(G,U)^\Gamma$, endowed with the metric 
\[ D(\alpha,\beta)=\sqrt{\int_{\Gamma \backslash\! G} d(\alpha(x),\beta(x))^2dx} \]
where the integration is taken over a fundamental domain for $\Gamma$ in $G$.
Define the map $\psi:Y\to L(G,U)^\Gamma$ by $\psi(y)(g)=\phi(gy)$.
Note that indeed, $\psi(y)$ is $\Gamma$-invariant, and further $\psi$ intertwines the $G$-action on $Y$ and the $G$-action on $L(G,U)^\Gamma$
coming from the right regular action of $G$.
By $G$-metric ergodicity of $Y$ we conclude that $\psi$ is essentially constant.
The essential image is a $G$-invariant function on $G$, thus a constant function to $U$.
This constant in turn is the essential image of $\phi$, thus $\phi$ is essentially constant as well.
\end{proof}

Recall that for probability measure preserving actions, metric ergodicity is equivalent to the weak mixing property.

\begin{cor} \label{statME} \label{cor6.8}
Let $G$ be a semisimple analytic group with a finite center and no compact factors.
Let $\mu$ be an admissible probability measure on $G$.
Let $(X,\nu)$ be a  $G$-Lebesgue space endowed with a $\mu$-stationary ergodic probability measure.
Then $X$ is metrically ergodic.
In particular, if the action on $X$ is measure preserving then $X$ weakly mixing (and in fact it is mixing modulo the action kernel).
\end{cor}

In fact, in the measure preserving case, $G'\acts X$ is even mixing, as we shall see in \ref{HM}.
Below we sketch the proof of the corollary.
Since we do not want to dive into the details of the subject here, we address the interested reader to \cite{stat-struct} for further details and clarifications.
Assume that $X$ is not metrically ergodic.
By Theorem~\ref{ME-struct}, there exists a (non-compact) quotient group $G'$, a compact group $H'<G'$ and an equivariant map $\phi:X\to G'/H'$.
Denote $\nu'=\phi_*(\nu)$.
Since $\nu'$ is recurrent with respect to a random sequence in $G$, while the action is dissipative, we get a contradiction.
We further remark that by the theory of Furstenberg--Poisson boundaries, it is a general fact that the question of metric ergodicity of a stationary measure reduces to the invariant measure case.
Indeed, the Furstenberg--Poisson boundary of a group, with respect to an admissible measure, is always a metrically ergodic action.
It follows that for a stationary space $X$ and an equivariant map into a metric space, $X\to U$, the pushed measure is invariant:
the associated boundary map from the Furstenberg--Poisson boundary to $\Prob(U)$ must be constant, due to the existence of a natural invariant metric on $\Prob(U)$. 
Thus the corollary above is reduced to the classical theorem of Howe--Moore, Theorem~\ref{HM} which we will prove independently.

\begin{cor} \label{cor6.9}
Let $G$ be a semisimple analytic group with a finite center.
Let $Y$ be a metrically ergodic $G$-space.
Let $X$ be an ergodic probability measure preserving $G$-Lebesgue space.
Then the diagonal action of $G$ on $X\times Y$ is metrically ergodic.
\end{cor}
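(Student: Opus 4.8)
The plan is to follow the template of the preceding corollary on lattices: encode $\phi$ as an equivariant map out of $Y$ into a function space that carries an isometric $G$-action built from the invariant measure on $X$, apply metric ergodicity of $Y$ to kill the $Y$-dependence, and then dispose of the residual $X$-dependence by metric ergodicity of $X$. Concretely, let $\phi:X\times Y\to U$ be a $G$-equivariant measurable map into a separable metric space $U$ on which $G$ acts isometrically; we must show that $\phi$ is essentially constant. Replacing the metric $d$ by $\min\{d,1\}$ (which preserves the topology, separability, and the isometric action) we may assume $d\le 1$. Writing $\mu_X$ for the invariant probability measure on $X$, let $L(X,U)$ be the space of measurable maps $X\to U$ modulo null sets, metrized by $D(\alpha,\beta)=\sqrt{\int_X d(\alpha(x),\beta(x))^2\,d\mu_X(x)}$. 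This is a separable metric space, and $G$ acts on it by $(g\cdot\alpha)(x)=g\cdot\alpha(g^{-1}x)$; this action is isometric, precisely because $d$ is $G$-invariant and $\mu_X$ is $G$-invariant. I would then define $\psi:Y\to L(X,U)$ by $\psi(y)=\phi(\,\cdot\,,y)$, and a direct computation from the equivariance of $\phi$ shows $\psi(gy)=g\cdot\psi(y)$, so that $\psi$ is $G$-equivariant.

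First I would apply metric ergodicity of $Y$ to the equivariant measurable map $\psi$ into the isometric $G$-space $L(X,U)$, concluding that $\psi$ is essentially constant, say $\psi\equiv f_0$ with $f_0\in L(X,U)$. By Fubini this means $\phi(x,y)=f_0(x)$ for a.e.\ $(x,y)$. Feeding this identity back into the equivariance of $\phi$ yields $f_0(gx)=g\cdot f_0(x)$ for a.e.\ $x$ and every $g$; that is, $f_0:X\to U$ is itself a $G$-equivariant measurable map into the isometric $G$-space $U$.

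It then remains to show that $f_0$ is essentially constant, and this is the crux. It is exactly the assertion that the ergodic measure-preserving action on $X$ is metrically ergodic --- equivalently, in the measure-preserving setting and as recalled above, weakly mixing. This is not a consequence of ergodicity of $X$ alone; rather it is the content of the Howe--Moore phenomenon for these groups. Accordingly I would invoke Corollary~\ref{statME} (applied to the invariant, hence in particular stationary, measure on $X$) to conclude that the action on $X$ is metrically ergodic, whence $f_0$ is constant and therefore $\phi$ is essentially constant.

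The main obstacle is thus not the product construction, which is routine once the isometric $G$-action on $L(X,U)$ is in place (the only points needing care being the $\mu_X$-invariance used for isometry, the bound $d\le 1$ guaranteeing that $D$ is finite, and the separability of $L(X,U)$). The real content lies in securing metric ergodicity of the measure-preserving factor $X$, where the structure of $G$ enters through Theorem~\ref{HM} and Corollary~\ref{statME}: in particular the absence of compact factors is indispensable here, since a compact group acting on itself by translations already exhibits an ergodic measure-preserving action that is not metrically ergodic, and for which the conclusion would fail when $Y$ is a point.
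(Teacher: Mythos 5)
Your proof is essentially identical to the paper's: the same auxiliary space $L(X,U)$ with the same metric $D$, the same map $\psi(y)=\phi(\,\cdot\,,y)$, metric ergodicity of $Y$ applied first, and then Corollary~\ref{statME} invoked to dispose of the residual equivariant map $X\to U$. Your closing remark about compact factors is well taken: the corollary as stated omits the ``no compact factors'' hypothesis that Corollary~\ref{statME} requires, and the paper's own proof silently inherits the same mismatch, so flagging it is a genuine (if minor) improvement rather than a deviation.
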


\begin{proof}
Assume that $\phi:X\times Y\to U$ is a $G$-equivariant measurable map into a separable metric space on which $G$ acts isometrically.
By replacing if necessary the metric $d$ on $U$ by $\min\{d,1\}$ we may assume that $d$ is bounded.
Consider the space of measurable maps, defined up to null sets, $L(X,U)$, endowed with the metric 
\[ 
 D(\alpha,\beta)=\sqrt{\int_{X} d(\alpha(x),\beta(x))^2dx}. 
\]
Define the map $\psi:Y\to L(X,U)$ by $\psi(y)(x)=\phi(x,y)$.
Note that $\psi$ is $G$-equivariant.
By the $G$-metric ergodicity of $Y$ we conclude that $\psi$ is essentially constant.
The essential image is a $G$-equivariant map from $X$ to $U$.
By Corollary~\ref{statME}, $X$ is metrically ergodic as well, thus the latter map is also essentially constant.
It follows that $\phi$ was essentially constant to begin with.
\end{proof}

\section{Monoid compactifications}\label{sec:monoid}

\subsection{Ellis joint continuity}

Let $G$ be a topological group, $X$ a topological space and $G\times X\to X$ an action.
We will say that the action is {\em separately continuous} if for every $x_0\in X$ and $g_0\in G$ both maps
\[ G\to X,~g\mapsto gx_0 \quad\mbox{and}\quad X\to X,~x\mapsto g_0x \]
are continuous. We will say that the action is {\it jointly continuous} if the map
$$
 G\times X\to X,~(g,x)\mapsto gx
$$
is continuous.

\begin{lem} \label{strong->join}
Let $G$ be a topological group, $X$ a locally compact topological space and $G\times X\to X$ a separately continuous action.
Consider the left regular action of $G$ on $C_0(X)$ endowed with the sup-norm topology. Then the following are equivalent:
\begin{enumerate}
\item
The action of $G$ on $X$ is jointly continuous.
\item
For every $f\in C_0(X)$, the orbit map $G\to C_0(X)$ given by $g\mapsto f(g^{-1}\cdot)$ is continuous.
\item
The action of $G$ on $C_0(X)$ is jointly continuous.
\end{enumerate}
\end{lem}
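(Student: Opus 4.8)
My plan is to prove the equivalence as a short cycle, isolating the one genuinely analytic step. Throughout I write the left regular action as $(g\cdot f)(x)=f(g^{-1}x)$, so the orbit map in $(2)$ is $g\mapsto g\cdot f$. I would first record two soft facts. Since the action is separately continuous, each $g$ acts on $X$ as a homeomorphism (both $x\mapsto gx$ and $x\mapsto g^{-1}x$ are continuous), so $g\cdot f\in C_0(X)$; and the action on $C_0(X)$ is isometric, because $\|g\cdot f\|_\infty=\sup_x|f(g^{-1}x)|=\|f\|_\infty$ as $x\mapsto g^{-1}x$ is a bijection of $X$.

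The equivalence $(2)\Leftrightarrow(3)$ is then essentially free. Being isometric, the action preserves the metric, so the uniform structure $S$ attached to the sup-norm is $G$-invariant, hence equicontinuous, and the norm topology is exactly its $S$-topology $T_S$. Lemma~\ref{Joint} now gives that joint continuity with respect to $T_S$ holds if and only if the orbit maps are continuous, which is precisely the content of $(2)\Leftrightarrow(3)$ (and $(3)\Rightarrow(2)$ is in any case immediate by restricting the action map to $G\times\{f\}$). So it remains only to connect $(1)$ with $(2)$.

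For $(1)\Rightarrow(2)$ I would first reduce to continuity of $g\mapsto g\cdot f$ at the identity: for general $g_0$ one applies continuity at $e$ to the function $g_0\cdot f$, using that $g\cdot f=(gg_0^{-1})\cdot(g_0\cdot f)$ and that right multiplication by $g_0^{-1}$ is continuous. So the task is to show $\sup_x|f(h^{-1}x)-f(x)|\to 0$ as $h\to e$. Given $\epsilon>0$, pick a compact $K$ with $|f|<\epsilon/4$ off $K$ and, using local compactness, a compact $K'$ with $K\subset\operatorname{int}K'$. On $K'$ the joint continuity of $(h,x)\mapsto f(h^{-1}x)$ together with continuity of $f$, plus a finite subcover of $K'$, yields a neighborhood $V_1\ni e$ with $|f(h^{-1}x)-f(x)|<\epsilon/2$ for $h\in V_1$, $x\in K'$. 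For $x\notin K'$ one invokes the standard fact (valid for jointly continuous actions on locally compact spaces, proved by covering $K$ by opens $O_k$ with $V_k\cdot O_k\subset\operatorname{int}K'$ and intersecting the $V_k$) that there is $V_2\ni e$ with $V_2\cdot K\subset\operatorname{int}K'$; then for $h\in V_2$ and $x\notin K'$ we must have $h^{-1}x\notin K$, so both $|f(h^{-1}x)|$ and $|f(x)|$ are $<\epsilon/4$ and the difference is $<\epsilon/2$. Taking $V=V_1\cap V_2$ finishes. I expect this to be the main obstacle: the whole difficulty is uniform control of $f(h^{-1}x)$ as $x$ escapes to infinity, and this is exactly where local compactness is indispensable.

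Finally, for $(2)\Rightarrow(1)$ I would recover the topology of $X$ from $C_0(X)$ via the one-point compactification $X^+$, using that $C(X^+)=C_0(X)\oplus\mathbb{C}$ generates the (compact Hausdorff) topology of $X^+$: hence for a net and a point $y_0\in X$, one has $y_\alpha\to y_0$ in $X$ precisely when $f(y_\alpha)\to f(y_0)$ for all $f\in C_0(X)$. So it suffices, given $g_\alpha\to g_0$ and $x_\alpha\to x_0$, to show $f(g_\alpha x_\alpha)\to f(g_0x_0)$. Writing $f(g_\alpha x_\alpha)=(g_\alpha^{-1}\cdot f)(x_\alpha)$ and using continuity of inversion, $(2)$ gives $\|g_\alpha^{-1}\cdot f-g_0^{-1}\cdot f\|_\infty\to 0$, while continuity of the fixed function $g_0^{-1}\cdot f$ at $x_0$ gives $(g_0^{-1}\cdot f)(x_\alpha)\to(g_0^{-1}\cdot f)(x_0)$. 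A single triangle inequality, $|(g_\alpha^{-1}\cdot f)(x_\alpha)-(g_0^{-1}\cdot f)(x_0)|\le\|g_\alpha^{-1}\cdot f-g_0^{-1}\cdot f\|_\infty+|(g_0^{-1}\cdot f)(x_\alpha)-(g_0^{-1}\cdot f)(x_0)|$, then yields $f(g_\alpha x_\alpha)\to f(g_0x_0)$, completing the proof.
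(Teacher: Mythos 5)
Your proposal is correct and follows essentially the same route as the paper: (2)$\Leftrightarrow$(3) via Lemma~\ref{Joint} (the action on $C_0(X)$ being isometric, hence equicontinuous), and for the key direction (2)$\Rightarrow$(1) the very same triangle-inequality estimate, with your one-point-compactification observation that $C_0(X)$ detects convergence playing the role of the paper's appeal to Urysohn's lemma that the sets $f^{-1}(W)$ form a sub-basis for the topology of $X$. The only real difference is that you spell out in full the direction (1)$\Rightarrow$(2), which the paper dismisses as ``standard''; your compactness/tube-lemma argument there is correct and is indeed where local compactness is used.
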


\begin{proof}
The fact that (1) implies (3) is standard.
Clearly (3) implies (2) (in fact, the converse implication is given by Lemma~\ref{Joint}).
 We prove that (2) implies (1).
By Urysohn's lemma, the collection of subsets of $X$ of the form $f^{-1}(W)$ for $f\in C_0(X)$ and $W$ open in $\bbC$ is a subbasis for the topology.
Fixing $f$ and $W$, our aim is to show that for every $g\in G$ and $x\in X$ with $gx\in f^{-1}(W)$ there exists an open set $(g,x)\in U\times V\subset G\times X$ such that $U\cdot V\subset f^{-1}(W)$.
Choose $\epsilon>0$ for which the disc $B(f(gx),\epsilon)\subset W$ and let
$$
 V=(g^{-1}f)^{-1}(B(g^{-1}f(x),\epsilon/2)).
$$
Let $U^{-1}\subset G$ be the preimage of $B(g^{-1}f,\epsilon/2)\subset C_0(X)$ under the $f$-orbit map $G\to C_0(X),~h\mapsto h^{-1}f$.
Then $U$ is open by our continuity assumption, and
for $h\in U, y\in V$,
\[ 
 |f(hy)-f(gx)| \leq |(h^{-1}f-g^{-1}f)(y)|+|g^{-1}f(y)-g^{-1}f(x)| < \|h^{-1}f-g^{-1}f\|+\epsilon/2 < \epsilon,
\]
i.e. $f(hy)\in W$.
Thus,
$U\cdot V\subset f^{-1}(W)$.
\end{proof}

\begin{thm}[Ellis] \label{Ellis}
Let $G$ be a locally compact group and $X$ a locally compact space. Then every separately continuous action of $G$ on $X$ is jointly continuous.
\end{thm}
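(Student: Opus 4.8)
The plan is to reduce the statement to Lemma~\ref{strong->join}, so that it suffices to verify condition (2) there: for each $f\in C_0(X)$ the orbit map $\pi_f\colon G\to C_0(X)$, $g\mapsto f(g^{-1}\,\cdot\,)$, is continuous for the sup-norm. Since each $g$ acts as a linear isometry $\lambda_g$ of $C_0(X)$ (because $x\mapsto g^{-1}x$ is a homeomorphism, by separate continuity), one has $\|\pi_f(g)-\pi_f(g_0)\|=\|\pi_f(g_0^{-1}g)-f\|$, so it is enough to prove norm-continuity of each $\pi_f$ at the identity $e$. Moreover, as $C_c(X)$ is sup-norm dense in $C_0(X)$ and $\sup_g\|\pi_f(g)-\pi_{f'}(g)\|=\|f-f'\|$, a uniform-limit argument reduces the task to symbols $f\in C_c(X)$.

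Fix such an $f$ with compact support $K$. I would prove that the scalar symbol $H(g,x)=f(g^{-1}x)$ is \emph{jointly} continuous at every point of $\{e\}\times X$, and then extract norm-continuity of $\pi_f$ at $e$ by a compactness argument: for $g$ in a fixed compact neighbourhood $Q$ of $e$ the relevant $x$ lie in the compact set $K'=QK\cup K$, and joint continuity of $H$ along the compact slice $\{e\}\times K'$ gives, by a standard finite-subcover argument, $\sup_x|f(g^{-1}x)-f(x)|\to 0$ as $g\to e$. To obtain joint continuity of $H$, note that $H$ is separately continuous (each $x\mapsto f(g^{-1}x)$ is continuous, and each $g\mapsto f(g^{-1}x)$ is continuous since $g\mapsto g^{-1}x$ is), that $G$ is a Baire space as it is locally compact, and that $x$ may be restricted to a compact neighbourhood $L$ of a chosen base point $x_0$. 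Namioka's joint-continuity theorem then produces a dense $G_\delta$ set $D\subseteq G$ such that $H$ is jointly continuous at every point of $D\times L$. I would spread this to $e$ by homogeneity: the homeomorphism $(g,x)\mapsto(g_1^{-1}g,\,g_1^{-1}x)$ of $G\times X$ preserves the value $g^{-1}x$, hence leaves $H$ invariant, and carries $(g_1,x_0)$ to $(e,g_1^{-1}x_0)$; so for $g_1\in D$, $H$ is jointly continuous at $(e,y)$ for all $y\in g_1^{-1}\,\mathrm{int}(L)$. Since $\{g:gx_0\in\mathrm{int}(L)\}$ is an open neighbourhood of $e$, it meets the dense $D$, yielding $g_1\in D$ with $x_0\in g_1^{-1}\,\mathrm{int}(L)$, whence $H$ is jointly continuous at $(e,x_0)$; running this for each $x_0\in K'$ gives continuity along the whole slice.

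The main obstacle is exactly the passage from separate to joint continuity, i.e. Namioka's theorem, which is the genuine category-theoretic core and is where local compactness of $G$ (for the Baire property) and of $X$ (to localise on a compact neighbourhood) both enter. The naive direct attempt — assuming failure, extracting a subnet $x_\alpha\to x_*$ and trying to conclude $g_\alpha^{-1}x_\alpha\to x_*$ — is circular, since that last convergence is precisely the joint continuity being sought; this is why a category argument is unavoidable. The remaining ingredients (the isometric reduction, the reduction to $C_c(X)$, the uniformization along the compact slice, and the homogeneity transfer) are routine once the category step is in hand. Finally I would record that, with all symbols $f(g^{-1}x)$ jointly continuous at each $(e,x_0)$, joint continuity of the action itself at an arbitrary $(g_0,x_0)$ follows by writing $gx=g_0\bigl((g_0^{-1}g)x\bigr)$ and invoking continuity of the single translation $y\mapsto g_0y$.
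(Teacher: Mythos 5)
There is one genuine gap, and it sits exactly at the step you label ``standard'': the claim that for $g$ ranging in a compact neighbourhood $Q$ of $e$ the relevant points $x$ lie in the \emph{compact} set $K'=QK\cup K$. At that stage of the argument you only know that the action is separately continuous, and $QK$ is the image of the compact set $Q\times K$ under the action map $G\times X\to X$; images of compact sets under merely separately continuous maps need not be precompact (there are separately continuous functions on $[0,1]^2$ that are unbounded, e.g.\ $(x,y)\mapsto xy/(x^2+y^2)^2$ extended by $0$ at the origin), and compactness of $QK$ is essentially a consequence of the joint continuity you are in the middle of proving. So the finite-subcover argument cannot be run over $K'$ as written; that step is circular.

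The repair is routine and uses only what you already have, namely joint continuity of $H$ at every point of $\{e\}\times X$, which your Namioka-plus-homogeneity argument does deliver. Since $g\mapsto g^{-1}$ is a homeomorphism of $G$ fixing $e$, it suffices to show $\sup_{x\in X}|f(gx)-f(x)|\to 0$ as $g\to e$. Over $x\in K=\supp f$: for each $x_0\in K$ joint continuity at $(e,x_0)$ gives a box $U_{x_0}\times V_{x_0}$ on which $|f(gx)-f(x_0)|<\epsilon/2$; a finite subcover of $K$ by the $V_{x_0}$'s and the intersection $U_0$ of the corresponding $U_{x_0}$'s give $|f(gx)-f(x)|<\epsilon$ for $g\in U_0$, $x\in K$. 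For $x\notin K$ one has $f(x)=0$, and one argues by contradiction on the compact set $K_\epsilon=\{|f|\ge\epsilon\}\subset\{f\neq 0\}$: joint continuity at the points of $\{e\}\times K_\epsilon$ plus a finite subcover give a neighbourhood $U_1\ni e$ with $f(hz)\neq 0$ for all $h\in U_1$, $z\in K_\epsilon$; if now $g\in U_1^{-1}$, $x\notin K$ and $|f(gx)|>\epsilon$, then $z=gx\in K_\epsilon$ and $f(x)=f(g^{-1}z)\neq 0$, a contradiction. Hence $\sup_x|f(gx)-f(x)|\le\epsilon$ for $g\in U_0\cap U_1^{-1}$. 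With this repair your proof is correct, and it is genuinely different from the paper's: the paper also reduces to condition (2) of Lemma~\ref{strong->join}, but verifies it by proving \emph{weak} continuity of the orbit maps (Riesz representation plus dominated convergence) and upgrading via Proposition~\ref{weak->strong}, an argument that tests along sequences and therefore only covers first countable $G$ --- the general case is delegated to the citation of Ellis. Your category route needs no countability hypothesis and proves the full statement; just be careful to quote Namioka's theorem under its actual hypothesis (strong countable completeness, which locally compact spaces satisfy) rather than the Baire property alone, since for a Baire first factor and a non-metrizable compact second factor the Namioka conclusion can fail.
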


This is a corollary of Ellis' joint continuity theorem \cite{Ellis-joint}. We give below an independent short proof, assuming that $G$ is first countable.
We will relay on the following well known fact.

\begin{prop} \label{weak->strong}
For a representation of a locally compact group on a Banach space by bounded operators, the following are equivalent:
\begin{itemize}
\item the orbit maps are weakly continuous
\item the orbit maps are strongly continuous.
\end{itemize}
\end{prop}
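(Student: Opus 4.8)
The plan is to prove the only nontrivial implication, that weak continuity of the orbit maps forces strong continuity; the converse is immediate, since norm convergence implies weak convergence. Write $\pi\colon G\to B(E)$ for the representation on the Banach space $E$, with orbit maps $g\mapsto\pi(g)v$, and fix left Haar measure on $G$. First I would establish local uniform boundedness. For each fixed $v\in E$ the map $g\mapsto\pi(g)v$ is weakly continuous, so on any compact $K\subset G$ its image is weakly compact, hence norm bounded; thus $\sup_{g\in K}\|\pi(g)v\|<\infty$ for every $v$. Applying the uniform boundedness principle to the family $\{\pi(g):g\in K\}$ then gives $M_K:=\sup_{g\in K}\|\pi(g)\|<\infty$.

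Next I would introduce smeared vectors. For $f\in C_c(G)$ set $\pi(f)v=\int_G f(g)\,\pi(g)v\,dg$, understood as a weak (Pettis) integral: since $g\mapsto\pi(g)v$ is weakly continuous and bounded on $\supp(f)$, the prospective integral lies in the weakly compact closed convex hull of a bounded subset of $E$, and so defines an honest element of $E$ rather than merely of $E^{**}$. The orbit map of such a vector is strongly continuous. Indeed, from the homomorphism property and left-invariance one computes $\pi(h)\pi(f)v=\pi(L_hf)v$, where $(L_hf)(g)=f(h^{-1}g)$, and hence near any $h_0$ one obtains the estimate $\|\pi(h)\pi(f)v-\pi(h_0)\pi(f)v\|\le M_K\,\|L_{h_0^{-1}h}f-f\|_1\,\|v\|$, which tends to $0$ by the continuity of left translation in $L^1(G)$.

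Finally I would run a closure-and-density argument. Let $F\subset E$ be the set of vectors whose orbit map is strongly continuous. It is a linear subspace, and it is norm-closed: if $w_n\to w$ with each $w_n\in F$, then on a compact neighborhood $K$ of a given $g_0$ the splitting $\pi(g)w-\pi(g_0)w=\pi(g)(w-w_n)+\bigl(\pi(g)w_n-\pi(g_0)w_n\bigr)+\pi(g_0)(w_n-w)$ together with the bound $M_K$ forces continuity of $g\mapsto\pi(g)w$ at $g_0$. By the previous step $F$ contains every smeared vector $\pi(f)v$. Now take an approximate identity $(f_i)$ in $C_c(G)$, nonnegative with $\int f_i=1$ and supports shrinking to $e$; weak continuity at $e$ gives $\langle\varphi,\pi(f_i)v\rangle=\int f_i(g)\langle\varphi,\pi(g)v\rangle\,dg\to\langle\varphi,v\rangle$ for every $\varphi\in E^*$, so $v$ lies in the weak closure of the linear span of $\{\pi(f)v:f\in C_c(G)\}$. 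By Mazur's theorem this weak closure equals the norm closure, which is contained in the closed subspace $F$; hence $v\in F$, i.e. the orbit map of $v$ is strongly continuous.

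The main delicate points are the verification in the second step that the weak integral $\pi(f)v$ genuinely lands in $E$, and the norm-closedness of $F$; both rest on the local uniform boundedness $M_K<\infty$ established at the outset. The conceptual heart, however, is the approximate-identity/Mazur step, which is precisely the device that transports the weak information available at the identity into genuine norm continuity while avoiding the circularity of trying to prove $\pi(f_i)v\to v$ in norm directly.
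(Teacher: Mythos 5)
Your proof is correct, and it is exactly the approach the paper takes: the paper's entire proof is the remark that ``this is a standard approximate identity argument'' with a citation to de Leeuw--Glicksberg \cite[Theorem~2.8]{DLG}, and your argument (local uniform boundedness, smeared vectors $\pi(f)v$ with strongly continuous orbit maps, then approximate identity plus Mazur to pass from weak to norm approximation) is precisely that standard argument written out in full.
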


\begin{proof}
This is a standard approximate identity argument, see for example \cite[Theorem~2.8]{DLG}.
\end{proof}

\begin{proof}[Proof of Theorem~\ref{Ellis} for first countable groups]
In view of Proposition~\ref{weak->strong} and Lemma~\ref{strong->join}, it is enough to show that for $f\in C_0(X)$, the orbit map $g\mapsto gf$ is weakly continuous.
By Riesz' representation theorem every functional on $C_0(X)$ is represented by a finite complex measure and by  the Hahn-Jordan decomposition
it is enough to consider a positive measure $\mu$.
By the first countability of $G$ it is enough to prove that for a converging sequence in $G$, $g_n\to g$, we have the convergence $\int g_nfd\mu \to \int gfd\mu$. This indeed follows from Lebesgue's bounded convergence theorem.
\end{proof}

\subsection{Monoids}

Let $(X,T)$ be a compact semi-topological monoid. By this we mean that 
$X$ is a monoid and $T$ is a compact topology on $X$ for which the product is separately continuous --- for each $y\in X$ the functions 
\[ X\to X,~x \mapsto xy \quad\mbox{and}\quad X\to X,~x\mapsto yx \]
are both continuous, but the map $X\times X \to X$ may not be.
Note that $C(X)$ is invariant under left and right multiplication.
For every $f\in C(X)$ we denote $xf(y)=f(yx)$
and let $S_f$ be the uniform structure obtained on $X$ by pulling back the sup-norm uniform structure from $C(X)$ via the orbit map $X\to C(X)$,
$x\mapsto xf$.
We let $S$ be the uniform structure on $X$ generated by all the structures $S_f$, that is $S=\bigvee_{f\in C(X)} S_f$.


\begin{lem}
The topology $T$ is $S$-compatible.
\end{lem}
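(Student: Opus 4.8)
The plan is to unwind the definition of $S$-compatibility and reduce it to one elementary observation: closeness in the uniform structure $S_f$ controls the value of $f$, because evaluating at the monoid identity $e$ recovers $f$ itself. Fix $V\in T$ and a point $y\in V$. Since $X$ is compact (and Hausdorff), Urysohn's lemma applies exactly as in the proof of Lemma~\ref{strong->join}, so the sets $f^{-1}(W)$ with $f\in C(X)$ and $W\subseteq\bbC$ open form a sub-basis for $T$. Hence I may choose finitely many $f_1,\dots,f_n\in C(X)$ and open sets $W_1,\dots,W_n\subseteq\bbC$ with $y\in\bigcap_{i=1}^n f_i^{-1}(W_i)\subseteq V$, and for each $i$ a radius $\epsilon_i>0$ with $B(f_i(y),2\epsilon_i)\subseteq W_i$.

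Next I would set $V'=\bigcap_{i=1}^n f_i^{-1}\bigl(B(f_i(y),\epsilon_i)\bigr)$, which is an open $T$-neighbourhood of $y$, and $U=\bigcap_{i=1}^n\{(v,v')\mid \|vf_i-v'f_i\|<\epsilon_i\}$. Each set in this intersection is a basic element of the structure $S_{f_i}\subseteq S$, so the finite intersection $U$ lies in $S$. The key step, and essentially the only thing requiring an idea, is that $S_f$-closeness forces closeness of the values of $f$: since $e$ is the identity of the monoid, $(vf)(e)=f(ev)=f(v)$, and therefore
\[
 |f(v)-f(v')| = |(vf)(e)-(v'f)(e)| \le \sup_{z\in X}|f(zv)-f(zv')| = \|vf-v'f\|.
\]

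Finally I would verify $UV'\subseteq V$ by a routine $2\epsilon$ estimate. If $v\in UV'$, pick $v'\in V'$ with $(v,v')\in U$. For each $i$, membership $v'\in V'$ gives $|f_i(v')-f_i(y)|<\epsilon_i$, while $(v,v')\in U$ together with the displayed inequality gives $|f_i(v)-f_i(v')|<\epsilon_i$; the triangle inequality then yields $f_i(v)\in B(f_i(y),2\epsilon_i)\subseteq W_i$. Hence $v\in\bigcap_{i=1}^n f_i^{-1}(W_i)\subseteq V$, which is precisely the required relation $UV'\subseteq V$, so $T$ is $S$-compatible. I expect the substantive point to be the bridge from the abstract structure $S$ to the topology $T$, furnished by evaluation at $e$; the remaining bookkeeping is standard. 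One thing worth recording is the implicit use of a separation axiom on $(X,T)$, needed to invoke Urysohn and hence the sub-basis $\{f^{-1}(W)\}$, since the statement as phrased demands none.
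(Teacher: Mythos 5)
Your proof is correct and follows essentially the same route as the paper's: reduce via Urysohn's lemma to sub-basic sets $f^{-1}(W)$, then use evaluation at the monoid identity, $|f(v)-f(v')|=|(vf)(e)-(v'f)(e)|\le\|vf-v'f\|$, to pass from $S_f$-closeness to closeness of values. The only difference is presentational: you spell out the finite-intersection bookkeeping (using that $S$ is a filter) that the paper compresses into ``it is enough to show that $T_f$ is $S_f$-compatible,'' and you rightly flag the implicit Hausdorff hypothesis needed for Urysohn.
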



\begin{proof}
Note that by Urysohn's lemma $T$ is the weakest topology on $X$ generated by the functions in $C(X)$.
Thus it is enough to show that for a given $f\in C(X)$, the topology $T_f$, generated on $X$ by $f$, is $S$-compatible.
We will show that it is in fact $S_f$-compatible.

Fix $x\in X$ and $\epsilon>0$ and consider $V=f^{-1}(B(f(x),\epsilon))\in T_f$.
Set
\[ V'=f^{-1}(B(f(x),\epsilon/2))\in T_f \quad \mbox{and} \quad U=\{(y,z)~|~\|yf-zf\|<\epsilon/2\} \in S_f. \]
For $y\in UV'$ there exists some $z\in V'$ such that $(y,z)\in U$.
Therefore
\[ |f(y)-f(x)|\leq |yf(e)-zf(e)|+|f(z)-f(x)| < \|yf-zf\|+ \epsilon/2 < \epsilon,\]
and thus $y\in V$.
It follows that $UV'\subset V$.
\end{proof}

Let now $G$ be a locally compact group with a continuous monoid morphism $G\to (X,T)$.
Note that by Theorem~\ref{Ellis} the product map $G\times X\to X$ is continuous.

\begin{lem} 
The action of $G$ on $(X,S)$ is continuous and equicontinuous.
\end{lem}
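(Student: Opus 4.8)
The plan is to reduce both assertions to a single identity relating left multiplication on $X$ to right translation on $C(X)$. Write $\iota\colon G\to X$ for the given continuous monoid morphism, so that the action in question is $g\cdot x=\iota(g)x$, and for $a\in X$ let $R_a$ be the right-translation operator on $C(X)$, $(R_a h)(y)=h(ya)$. The observation driving everything is that for $f\in C(X)$ and $x\in X$,
\[ (g\cdot x)f = R_{\iota(g)}(xf), \]
since $\bigl((g\cdot x)f\bigr)(y)=f\bigl(y\iota(g)x\bigr)=(xf)\bigl(y\iota(g)\bigr)=\bigl(R_{\iota(g)}(xf)\bigr)(y)$. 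Because $R_{\iota(g)}$ does not increase the sup-norm, i.e. $\|R_{\iota(g)}h\|\le\|h\|$ for all $h\in C(X)$, this identity is all that the proof really uses.

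First I would establish equicontinuity. Recall $S=\bigvee_{f\in C(X)}S_f$, where a base for $S_f$ is given by the entourages $U_{f,\epsilon}=\{(x,z)\mid\|xf-zf\|<\epsilon\}$. For any $g\in G$ and $(x,z)\in U_{f,\epsilon}$ the displayed identity and the contraction property give
\[ \|(g\cdot x)f-(g\cdot z)f\|=\|R_{\iota(g)}(xf-zf)\|\le\|xf-zf\|<\epsilon, \]
so $(g\cdot x,g\cdot z)\in U_{f,\epsilon}$. Conversely, since $\iota(e)=1_X$ acts trivially, a pair $(x,z)$ whose translates all lie in $U_{f,\epsilon}$ already lies in $U_{f,\epsilon}$ itself. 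Hence the entourage $\{(x,z)\mid\forall g\in G,\ (g\cdot x,g\cdot z)\in U_{f,\epsilon}\}$ occurring in the equicontinuity condition coincides with $U_{f,\epsilon}$. As these entourages form a base of $S$, the structure $S$ has a base of $G$-invariant entourages, which is precisely equicontinuity.

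For continuity with respect to $T_S$ I would invoke Lemma~\ref{Joint}: an equicontinuous action is jointly $T_S$-continuous once its orbit maps are $T_S$-continuous. Fixing $x_0\in X$, the $T_S$-continuity of $g\mapsto g\cdot x_0$ amounts, by the definition of $T_S$ and of the $S_f$, to the sup-norm continuity of $g\mapsto(g\cdot x_0)f=R_{\iota(g)}(x_0f)$ for every $f\in C(X)$. Setting $h=x_0f$, this is the norm-continuity of $g\mapsto R_{\iota(g)}h$. To obtain it, I would consider the map $G\times X\to X$, $(g,y)\mapsto y\iota(g^{-1})$, which is a genuine left action of $G$ on $X$ and is separately continuous, since $(X,T)$ is semi-topological and $\iota$ (and inversion in $G$) is continuous; by Theorem~\ref{Ellis} it is jointly continuous. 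Applying Lemma~\ref{strong->join} to this action (here $C_0(X)=C(X)$ as $X$ is compact), its orbit maps on $C(X)$ are norm-continuous, and unwinding the definition the orbit map of $h$ is exactly $g\mapsto R_{\iota(g)}h$. This supplies the continuity of the orbit maps, and Lemma~\ref{Joint} then yields joint $T_S$-continuity.

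The only genuinely delicate point is this last continuity step: the left multiplication defining the $G$-action on $X$ corresponds to \emph{right} translation on functions, so one must pass through the inverse in order to present the relevant action on $C(X)$ as a bona fide left action before quoting Ellis and Lemma~\ref{strong->join}. Keeping that left/right bookkeeping straight is where care is needed; by contrast, equicontinuity is immediate from the one-line contraction estimate above.
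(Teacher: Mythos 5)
Your proof is correct, and its skeleton matches the paper's: equicontinuity comes from the fact that right translation by $\iota(g)$ preserves the sup-norm (the paper writes $gU=U$ directly, using that $\iota(g)$ is invertible in $X$; your contraction estimate applied to both $g$ and $g^{-1}$ amounts to the same thing), and joint continuity is reduced to continuity of the orbit maps via Lemma~\ref{Joint}. The one place where you genuinely diverge is how the norm continuity of $g\mapsto R_{\iota(g)}h$ is obtained: the paper disposes of it by citing Proposition~\ref{weak->strong} (weak continuity of orbit maps implies strong continuity), leaving the required weak continuity implicit, whereas you derive it from Theorem~\ref{Ellis} applied to the separately continuous left action $(g,y)\mapsto y\iota(g^{-1})$, followed by Lemma~\ref{strong->join} (1)$\Rightarrow$(2). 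Your route is arguably tighter: the weak continuity behind the paper's citation is not completely immediate for a non-first-countable locally compact $G$ (bounded pointwise convergence does not control integrals along nets without an extra argument), while the joint continuity you need is exactly what Ellis' theorem provides --- and the paper has already invoked Ellis for the left multiplication action just before this lemma, so you are not importing anything new. Two cosmetic remarks: the entourages $U_{f,\epsilon}$ form a subbase rather than a base of $S=\bigvee_f S_f$, so one should either pass to finite intersections (which are again $G$-invariant) or, as the paper does, reduce at the outset to each $S_f$ separately; and your identity $(g\cdot x)f=R_{\iota(g)}(xf)$ is precisely the left/right bookkeeping that the paper keeps implicit in its notation $g^{-1}(xf-yf)$.
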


\begin{proof}
It is enough to show that for every $f\in C(X)$ the action of $G$ on $(X,S_f)$ is continuous and equicontinuous.
Fix $f\in C(X)$.
We first show that the action on $(X,S_f)$ is equicontinuous.
For every $\epsilon>0$ consider the uniformity 
$$
 U=\{(x,y)~|~\|xf-yf\|<\epsilon\}.
$$
Then 
\[ gU=\{(gx,gy)~|~\|xf-yf\|<\epsilon\}=\{(x,y)~|~\|g^{-1}(xf-yf)\|<\epsilon\} = \]
\[ \{(x,y)~|~\|xf-yf\|<\epsilon\}=U,\]
and uniform continuity follows.
We now show that the action is continuous.
By Lemma~\ref{Joint} it is enough to show that for a given $x\in X$ the $x$-orbit map $G\to (X,T_{S_f})$ is continuous.
This is equivalent to showing that the $xf$-orbit map $G\to C(X)$ is strongly continuous,
which is given by Lemma \ref{strong->join}.
\end{proof}

Let us summarize the conclusions of this section:

\begin{thm} \label{conditions}
Let $G$ be a locally compact group and $(X,T)$ a compact semi-topological monoid. Suppose we are given 
a continuous monoid representation $G\to X$ and let $S$ be the associated uniform structure on $X$. 
Then
\begin{itemize}
\item $T$ is an $S$-compatible, 
\item the left regular action $G\act X$ is jointly continuous with respect to both topologies $T$ and $T_S$, and
\item $G\act X$ is equicontinuous with respect to $S$.
\end{itemize}
\end{thm}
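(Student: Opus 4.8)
The plan is to prove Theorem~\ref{conditions} by assembling the three bulleted conclusions from the lemmas already established in this section, since each bullet is almost a direct restatement of one of them. The statement packages together: (i) that $T = T_S$, (ii) joint continuity of the left regular action with respect to both $T$ and $T_S$, and (iii) equicontinuity with respect to $S$. First I would recall the setup: we are given a locally compact group $G$ and a compact semi-topological monoid $(X,T)$ together with a continuous monoid morphism $G\to (X,T)$, and $S=\bigvee_{f\in C(X)} S_f$ is the uniform structure defined above.

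For the equicontinuity bullet (iii), I would simply invoke the lemma immediately preceding the theorem, which asserts that the action of $G$ on $(X,S)$ is continuous and equicontinuous; equicontinuity is exactly half of its conclusion. For the continuity of $G\act (X,T_S)$ in bullet (ii), I would again cite that same lemma, which gives continuity of the action with respect to the $S$-topology. For the continuity of $G\act (X,T)$, I would note that since $G\to(X,T)$ is a continuous monoid morphism and $(X,T)$ is a compact semi-topological monoid, Theorem~\ref{Ellis} (Ellis joint continuity) applies---as already observed in the remark just before the lemma---so the product map $G\times X\to X$ is jointly $T$-continuous, and in particular the left regular action is $T$-continuous.

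The one point requiring a small argument is the first bullet, the identification $T=T_S$, i.e. that $T$ is precisely the $S$-topology and not merely $S$-compatible. The earlier lemma showed only that $T$ is $S$-compatible. To upgrade this to equality, I would argue that $T_S\subseteq T$ and $T\subseteq T_S$ separately. For $T_S\subseteq T$: each generating uniformity $U\in S_f$ is defined through the orbit map $x\mapsto xf$ into $C(X)$, and since right multiplication is $T$-continuous while $f$ is $T$-continuous, the sets $U(x)$ are $T$-open, so $T_S\subseteq T$. For the reverse inclusion $T\subseteq T_S$, I would use that by Urysohn's lemma $T$ is the weakest topology generated by $C(X)$, together with the fact (from the $S$-compatibility proof) that each $T_f$ is $S_f$-compatible, so every $T$-basic open set is a union of $T_S$-open sets. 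Thus $T=T_S$.

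The step I expect to be the main obstacle is precisely this equality $T=T_S$, since the preceding lemma delivers only $S$-compatibility, and one must verify that the uniform structure $S$ is rich enough to recover all of $T$ rather than a coarser topology. The key observation making this go through is that $S$ is generated by the structures $S_f$ over \emph{all} $f\in C(X)$, so the $S$-topology sees every continuous function separating points of the compact space $X$; once this is noted, the inclusion $T\subseteq T_S$ follows from the separating property of $C(X)$ on a compact space, and the reverse inclusion follows from the $T$-continuity of right multiplication. The remaining bullets are then immediate citations of the two lemmas and Theorem~\ref{Ellis}.
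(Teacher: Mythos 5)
Your handling of the second and third bullets is correct and is exactly what the paper does: Theorem~\ref{conditions} is a summary statement, with equicontinuity and $T_S$-continuity of the action supplied by the lemma immediately preceding it, and $T$-continuity of the left action supplied by Ellis' theorem (Theorem~\ref{Ellis}) applied to the continuous morphism $G\to (X,T)$.

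The genuine gap is in your treatment of the first bullet. You read ``$T$ is an $S$-topology'' as the equality $T=T_S$ and set out to prove it; but the intended meaning --- the one established by the first lemma of the section (``The topology $T$ is $S$-compatible'') and the one actually needed to invoke Theorem~\ref{mainthm}, whose hypothesis is precisely an $S$-compatible topology $T$ --- is $S$-compatibility, which is strictly weaker. The equality $T=T_S$ is false in general, and your argument for the inclusion $T_S\subseteq T$ is where it breaks: you claim the sets $U(x)=\{y:\|yf-xf\|<\epsilon\}$ are $T$-open because right multiplication and $f$ are $T$-continuous, but separate continuity of the product only makes the orbit map $y\mapsto yf$ continuous into $C(X)$ with the topology of \emph{pointwise} convergence; openness of $U(x)$ requires continuity into $\bigl(C(X),\|\cdot\|_\infty\bigr)$, which does not follow and generally fails. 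Concretely, take $X=G^*$, the one-point compactification of a non-compact $G$, and $f\in C(X)$ with $f(e)=1$ and $f(\infty)=0$. Then $\infty f\equiv 0$, while for every $y\in G$ one has $\|yf-\infty f\|\geq |f(y^{-1}y)|=1$; hence for $\epsilon<1$ the set $U(\infty)$ equals $\{\infty\}$, so $\{\infty\}$ is $T_S$-open although $\infty$ is not $T$-isolated, and $T\subsetneq T_S$. This is no accident: if $T=T_S$ held, the two topologies appearing in the application of Theorem~\ref{mainthm} would collapse into one, erasing exactly the weak-versus-uniform dichotomy (weak$^*$ versus norm, in the Banach setting) that the whole framework is built to exploit. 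Your other inclusion, $T\subseteq T_S$, is fine, but it is simply a consequence of $S$-compatibility, which the paper's lemma already provides; no ``upgrade'' of that lemma is needed, and none is possible.
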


\subsection{Weakly almost periodic rigidity}\label{sec:WAP}


%
%

Let $G$ be a locally compact group.
By a {\it monoid representation} of $G$ we mean a continuous monoid homomorphism from $G$ into a compact semi-topological monoid.

\begin{exam}
If $G$ is non-compact we denote by $G^*$ the one-point compactification of $G$, $G\cup \{\infty\}$, with the multiplication extended from that of $G$ by 
$$
 g\infty=\infty g=\infty\infty=\infty
$$ 
for every $g\in G$.
We let $i^*:G\to G^*$ be the obvious embedding.
If $G$ is compact we set $G^*=G$ and $i^*=$ the identity map.
In both cases, $i^*:G\to G^*$ form a monoid representation of $G$.
\end{exam}

We will say that a monoid representation with dense image $i:G\to X$ is {\it universal} if for every
monoid representation $j:G\to Y$ there exists a unique continuous monoid homomorphism $k:X\to Y$ such that $j=ki$. The pair $(i,X)$ will be referred as a {\it universal system}.

\begin{thm} \label{universal}
The locally compact group $G$ admits a universal monoid representation $i:G\to X$.
Every two universal systems are uniquely isomorphic.
Furthermore, $i$ is a homeomorphism into its image and $i(G)$ is open and dense in $X$.
\end{thm}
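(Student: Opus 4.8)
The plan is to construct $X$ as a closed ``diagonal'' inside a product of compact semi-topological monoids, to read off the universal property from the projections, and finally to deduce the topological statements about $i$ by comparing $X$ with the one-point compactification $G^*$.

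\emph{Existence and the universal property.} The first move is a set-theoretic reduction. Given any monoid representation $j:G\to Y$, I would check that $\overline{j(G)}$ is again a compact semi-topological monoid: for fixed $g$ the right translation by $j(g)$ is continuous and preserves $j(G)$, hence preserves $\overline{j(G)}$, and then for fixed $a\in\overline{j(G)}$ the left translation by $a$ is continuous and carries $j(G)$ into $\overline{j(G)}$, hence carries $\overline{j(G)}$ into itself; so $\overline{j(G)}$ is a submonoid. Replacing $Y$ by $\overline{j(G)}$ we may assume every representation has dense image, and such a $Y$ is a compact Hausdorff space of density at most $\lvert G\rvert$, hence of cardinality at most $2^{2^{\lvert G\rvert}}$; thus up to isomorphism of monoid representations there is only a \emph{set} of them. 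Fixing a set of representatives $\{j_\lambda:G\to Y_\lambda\}_{\lambda}$, I set $P=\prod_\lambda Y_\lambda$ and $i=\Delta:G\to P$, $g\mapsto(j_\lambda(g))_\lambda$. Here $P$ is compact by Tychonoff, is a coordinatewise monoid, and has separately continuous multiplication (a map into a product is continuous iff each coordinate is, and each coordinate of a one-sided translation in $P$ is a one-sided translation in some $Y_\lambda$). Taking $X=\overline{\Delta(G)}$, the submonoid argument above shows $X$ is a compact semi-topological monoid and $i:G\to X$ is a monoid representation with dense image. For the universal property, given $j:G\to Y$ I replace $Y$ by $\overline{j(G)}$, identify it with some $(j_{\lambda_0},Y_{\lambda_0})$, and take $k=\pi_{\lambda_0}|_X:X\to Y_{\lambda_0}\hookrightarrow Y$; this is a continuous monoid homomorphism with $ki=j$, and it is unique because $i(G)$ is dense and $Y$ is Hausdorff.

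\emph{Uniqueness of universal systems.} This is the standard categorical argument: if $(i,X)$ and $(i',X')$ are both universal, universality produces $k:X\to X'$ and $k':X'\to X$ with $ki=i'$ and $k'i'=i$; then $k'k$ and $\mathrm{id}_X$ both solve the universal problem for $i:G\to X$, forcing $k'k=\mathrm{id}_X$, and symmetrically $kk'=\mathrm{id}_{X'}$. Hence the isomorphism $k$ exists and is unique.

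\emph{The embedding and openness.} I would apply the universal property to the one-point compactification monoid representation $i^*:G\to G^*$, obtaining a continuous monoid homomorphism $k:X\to G^*$ with $k\circ i=i^*$. Since $i^*$ is a topological embedding (realizing $G$ as an open subset of $G^*$) and $i^*=k\circ i$, the map $i$ is injective; moreover for $z\in i(G)$ one has $i^{-1}(z)=(i^*)^{-1}\big(k(z)\big)$, so $i^{-1}=(i^*)^{-1}\circ k|_{i(G)}$ is continuous, and $i$ is a homeomorphism onto its image. Consequently $i(G)\cong G$ is locally compact and dense in the compact Hausdorff space $X$, and by the standard fact that a dense locally compact subspace of a Hausdorff space is open (a locally compact subspace of a Hausdorff space is open in its closure), $i(G)$ is open in $X$.

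\emph{The main obstacle.} All the genuine difficulty sits in the existence step: one must bound the cardinality of dense-image representations so that the product is taken over an honest set, and one must verify that separate continuity survives both the passage to the product and the passage to the closure $\overline{\Delta(G)}$ (the closure argument uses separate continuity on each side in turn). By contrast, once $k:X\to G^*$ is in hand the \emph{Furthermore} clause is short, its only real input being the point-set lemma on dense locally compact subspaces.
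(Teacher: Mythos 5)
Your proposal is correct and follows essentially the same route as the paper: form the product of representatives of the (set of) isomorphism classes of dense-image monoid representations, take $X$ to be the closure of the diagonal image, read off existence and uniqueness of $k$ from the projections and density, and use the factor $G^*$ to get the embedding and openness. The only cosmetic difference is the bookkeeping step showing these classes form a set — you use the cardinality bound $2^{2^{|G|}}$ for Hausdorff spaces with dense subsets of size at most $|G|$, while the paper realizes the classes as a subset of the norm-closed subalgebras of $C_b(G)$ — and your filling-in of the details the paper leaves implicit (closures of images are submonoids, $i^{-1}=(i^*)^{-1}\circ k|_{i(G)}$, dense locally compact subspaces are open) is accurate.
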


\begin{proof}
The collection of isomorphism classes of
monoid representations of $G$ with dense images forms a set; it could be described for example
as a subset of the set of all norm closed subalgebras of $C_b(G)$.
Pick one representative for each class and
consider the product space of those, let $i$ be the diagonal morphism from $G$ to this product space
and let $X$ be the closure of $i(G)$.
The existence of $k$ follows immediately. The uniqueness of $k$ follows by the fact that $i(G)$ is dense in $X$, and the uniqueness of the pair $(i,X)$ is obvious.
That $i(G)$ is open follows from the fact that $G^*$ is a factor of $X$.
\end{proof}

\begin{defn}
The representation alluded to in Theorem ~\ref{universal} is called $\WAP(G)$.
\end{defn}

The representation $\WAP(G)$ was defined and studied by de Leeuw and Glicksberg.
Our presentation here is slightly different from theirs.

\begin{rem}
By the Gelfand--Neumark theory, compactifications of $G$ correspond to point separating $*$-subalgebras of $C_b(G)$, where general $*$-subalgebras of the latter correspond to compactifications of (topological) quotients of $G$, and the Stone--\v{C}ech (the largest) compactification correspond to the full algebra $C_b(G)$. Among these, the monoid representations of $G$ correspond to subalgebras carrying an additional structure, and $\WAP(G)$ corresponds to the largest such algebra. It can be shown that it is the algebra of weakly almost periodic functions on $G$, hence the notation.
%
%
We will not elaborate on the point of view of almost periodic functions on $G$.
\end{rem}

\begin{defn}
A group $G$ will be said to be {\it WAP-rigid} if $\WAP(G)\simeq G^*$.
\end{defn}

\begin{exam}
If $G$ is compact then clearly $\WAP(G)=G=G^*$ and $G$ is WAP-rigid.
\end{exam}

The following theorem, which was proved first in \cite{veech} and \cite{EN} (for simple Lie groups), could be seen as a special case of Theorem~\ref{structure} below.
For clarity we give a separate proof.

\begin{thm}[\cite{veech},\cite{EN}] \label{thm:waprigid} \label{7.13}
Let $G$ be an almost simple analytic group over a local field. Then $G$ is WAP-rigid.
\end{thm}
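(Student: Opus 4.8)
The goal is to prove that an almost simple analytic group $G$ over a local field is WAP-rigid, meaning $\WAP(G) \simeq G^*$, the one-point compactification with its monoid structure. The plan is to exploit the machinery built up in the preceding sections: a universal monoid representation $i:G \to X = \WAP(G)$ carries, by Theorem~\ref{conditions}, a uniform structure $S$ and a compatible topology $T$ with respect to which the left regular $G$-action is equicontinuous and jointly continuous in both $T$ and $T_S$, with $T$ itself being an $S$-topology. Since $G$ is almost simple analytic over a local field, it is qss by Theorem~\ref{thm:ss-qss}, so Theorem~\ref{mainthm} is available. The main idea is that the only obstruction to $X$ being as small as $G^*$ is the presence of limit points of $i(G)$ in $X$, and these limit points form a compact remainder $X \setminus i(G)$ that the $G$-action must treat rigidly.

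First I would set $Y = X \setminus i(G)$, which is compact since $i(G)$ is open and dense in $X$ (Theorem~\ref{universal}). The plan is to apply the main theorem to the open orbit $i(G)$ to constrain how $G$ can act, and then to analyze the boundary $Y$. The left action of $G$ on $X$ has $i(G)$ as a single open orbit on which $G$ acts simply transitively (via $i$, which is a homeomorphism onto its image). Because $G$ is almost simple, its only proper normal subgroups are compact (contained in a finite or compact center), so the hypothesis of Theorem~\ref{mainthm} — that no non-compact normal subgroup has a global fixed point — reduces to verifying that $G$ itself has no global fixed point in $X$, which follows from the faithful transitive action on the open dense orbit. Corollary~\ref{orbits} then tells us that $G$-orbits in $X$ are $T$-closed and point stabilizers are compact.

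The heart of the argument is to show that the compact monoid $X$ is forced to collapse its boundary $Y$ to a single absorbing point. Here I would use the monoid structure: for the multiplication to extend continuously (separately) and to be compatible with the universal property mapping onto $G^*$, every element of $Y$ must act on $X$ as a constant map, since elements of $Y$ are limits of $g_\alpha \to \infty$ in $G$ and by the Mautner-type rigidity (Lemma~\ref{mautner}) any accumulation of the action at infinity must be fixed by the unipotents generating $G$, forcing the limit to be a $G$-fixed value. More precisely, I would argue that for a net $g_\alpha \to \infty$ in $G$, any $T$-limit $z$ of $i(g_\alpha)$ satisfies $gz = z$ for all $g$ in a dense subgroup (generated by the opposite unipotents $U_\pm$ and $U_0$ via Mautner's lemma), hence $z$ is a two-sided absorbing element of the monoid. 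Since $G$ is almost simple the whole group is generated this way, so $z$ is $G$-fixed, and there can be only one such point by the transitivity on $i(G)$; this point is exactly $\infty$.

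The main obstacle I anticipate is making rigorous the passage from ``$G$ acts rigidly on limit points'' to ``$Y$ is a single point with the correct monoid structure.'' One must verify not just that each boundary point is $G$-fixed under left multiplication but that the monoid multiplication on $X$ forces $Y = \{\infty\}$ with $\infty \cdot x = x \cdot \infty = \infty$, matching $G^*$. This requires combining the left-action rigidity with a symmetric right-action argument — both the left and right regular representations of $G$ on $C(X)$ must be controlled — and then invoking the universal property of $\WAP(G)$: the quotient map $X \to G^*$ is injective on $i(G)$ and collapses $Y$, and the rigidity shows it is also injective (hence an isomorphism) because $Y$ cannot carry more than one point. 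The delicate technical point is ensuring that the qss generation argument, which controls one-sided limits, suffices to pin down the two-sided monoid structure; I expect this is where the hypothesis that $G$ is almost \emph{simple} (rather than merely semisimple) is used essentially, so that the dense subgroup generated by $U_+^{(a_\beta)}$, $U_-^{(a_\beta)}$, and $U_0^{(a_\beta)}$ is all of $G$ and admits no nontrivial fixed behavior on the boundary.
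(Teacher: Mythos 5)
Your overall strategy (work directly with $X=\WAP(G)$, analyze the remainder $Y=X\setminus i(G)$, and collapse it to a single absorbing point) could in principle be made to work, but as written the proof rests on a genuine logical error. You claim that the hypothesis of Theorem~\ref{mainthm} holds for the left action of $G$ on $X$ --- arguing that a global fixed point is ruled out by the faithful transitive action on the open dense orbit $i(G)$ --- and you then invoke Corollary~\ref{orbits} to conclude that $G$-orbits in $X$ are $T$-closed with compact stabilizers. This is exactly backwards. A global fixed point would lie in the remainder $Y$, about which transitivity on $i(G)$ says nothing; indeed the entire content of the theorem is that $Y$ consists of a single $G$-fixed point, so the hypothesis of Theorem~\ref{mainthm} \emph{fails} for this action. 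Correspondingly, the conclusion you draw from Corollary~\ref{orbits} is false: $i(G)$ is a dense, proper, non-compact orbit in the compact space $X$, hence certainly not closed. The correct use of Theorem~\ref{mainthm} --- and the paper's use --- is the contrapositive: since the orbit of $e$ is non-compact while its closure is compact, the action map cannot be universally closed, and therefore some non-compact normal subgroup fixes a point; almost simplicity then upgrades this to the $G$-fixed point that becomes $\infty$.

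Two further steps are asserted without justification. First, Lemma~\ref{mautner} applied to the left action along a net $a_\beta\to\infty$ yields invariance of a $T$-limit only under $U_+^{(a_\beta)}$; it gives nothing for $U_0^{(a_\beta)}$, whose elements need not satisfy the hypothesis $\lim g^{a_\beta^{-1}}=e$ of that lemma, and $U_-^{(a_\beta)}$-invariance requires either the right action or the zig-zag machinery (Lemma~\ref{difference} with Lemma~\ref{inverseseq}), which needs $T_S$-limits rather than the $T$-limits you have. Note also that in the proof of Theorem~\ref{mainthm} the density of $\langle U_+,U_-,U_0\rangle$ is used only to show that the closed subgroup generated by $U_+\cup U_-$ is \emph{normal} (via Lemma~\ref{U+-normal}), not that the stabilizer of the limit point is dense; so your claim that any limit of $i(g_\alpha)$ is fixed by a dense subgroup ``via Mautner'' is unsupported as stated. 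Second, uniqueness of the fixed point does not follow from ``transitivity on $i(G)$''. The paper obtains existence, uniqueness, and the absorbing property in one stroke from the monoid identity $x=xy=y$, where $x$ is a \emph{right}-$G$-invariant point (Theorem~\ref{mainthm} applied to the right action, which is legitimate by Theorem~\ref{conditions}) and $y$ a \emph{left}-$G$-invariant one; any left-invariant $y'$ then satisfies $y'=xy'=x$. This left-against-right interplay is precisely the step your final paragraph gestures at but does not carry out. Finally, note that the paper's proof has a different global shape: it factors an \emph{arbitrary} monoid representation $j\colon G\to X$ through $G^*$ (after replacing $X$ by $X\times G^*$ to force the image to be non-compact), thereby showing that $G^*$ itself satisfies the universal property; this bypasses any structural analysis of $\WAP(G)$.
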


\begin{proof}
We assume $G$ is non-compact.
Let $j:G\to X$ be any monoid representation of $G$.
We will construct a continuous monoid morphism $k:G^*\to X$ satisfying $j=ki^*$.
Such a morphism is clearly unique.
In view of Theorem~\ref{conditions} we are in the situation to apply Theorem~\ref{mainthm}
to either the left or the right actions of $G$ on $X$.
Upon replacing $X$ with $X\times G^*$ we may assume that $j(G)=Ge=eG$ is non-compact.
We therefore get by Theorem~\ref{mainthm} the existence of a point
$x\in \overline{j(G)}$ which is right $G$-invariant and 
a point $y\in \overline{j(G)}$ which is left $G$-invariant. By continuity of the product in $X$ we have $x=xy=y$.
It follows that $x$ is the unique left $G$-invariant point in $\overline{j(G)}$.
We then define $k:G^*\to X$ by setting $k(g)=ge$ for $g\in G$ and $k(\infty)=x$.
Clearly $k$ is a continuous morphism.
\end{proof}

We now discuss semisimple (rather than simple) groups. Let $G$ be a finite centred semisimple analytic group over a local field.
Then $G=G_0G_1\cdots G_n$ where $G_0$ is compact and $G_1,\ldots, G_n$ are the non-compact almost simple factors.
For each $I\subset \{1,\ldots,n\}$ we let 
$$
 G_I=\prod_{i\in I} G_i <G~\text{and}~G^I=G/G_I.
$$
In particular, $G^{\{1,\ldots,n\}}$ is a quotient of $G_0$ hence a compact group.
Note that for $I\subseteq J$ there is a natural homomorphisms $\phi^I_J:G^I\to G^J$.
We denote $\phi_J=\phi^\emptyset_J:G\to G^J$.

We define $\check{G}=\coprod_{I\subseteq \{1,\ldots,n\}} G^I$.
The sets of the form
$$
\bigcup_{J'\subseteq J}(\phi^{J'}_{J})^{-1}(U) \quad\text{and}\quad \check{G}\setminus \bigcup_{J'\subseteq J}(\phi^{J'}_{J})^{-1}(K),
$$
where $J$ is a subset of $\{1,\ldots,n\}$, $U\subset G^J$ is open and $K\subset G^J$ is compact,
generate a compact Hausdorff topology on $\check G$.
We always refer to this topology when regarding $\check{G}$ as a topological space.
In order to understand this topology it might be helpful to note that
for 
$I\subseteq \{1,\ldots,n\}$ and 
a sequence $g_n \in G^I$,
$\check{G}\text{-}\lim g_n=\lim \phi_J(g_n)$ if and only if the right hand side limit, which is the standard limit in the group $G^J$, exists,
where $J$ is the minimal set 
satisfying $I\subseteq J\subseteq \{1,\ldots,n\}$ 
for which $\phi^I_J(g_n)$ is bounded.

We introduce a natural monoid structure on $\check{G}$ as follows.
For $I,J\subseteq \{1,\ldots,n\}$ and $g\in G^I$, $h\in G^J$
we set $gh=\phi_I^{I\cap J}(g)\phi_J^{I\cap J}(h)\in G^{I\cap J}$.
This makes $\check{G}$ a compact semi-topological monoid.

\begin{thm} \label{structure} \label{thm7.14}
$\WAP(G)\simeq \check{G}$.
\end{thm}


\begin{proof}
We prove the theorem by induction on $n$, the number of non-compact simple factors of $G$.
The induction basis is the case $n=1$ which follows from Theorem~\ref{thm:waprigid}.
We let $j:G\to X$ be a monoid representation.
For any $I\subsetneq \{1,\ldots,n\}$ we have by our induction hypothesis $\WAP(G_I)=\check{G}_I$.
In particular $\WAP(G_I)$ has a unique left $G_I$-fixed point which is also a unique right $G_I$-fixed point (as $G_I$ has no compact factor).
It follows that there is a unique left $G_I$-fixed point which is also a unique right $G_I$-fixed point in $\overline{j(G_I)}$.
We denote it by $e_I$.
We define a map $\check{G}\to X$ by sending $g\in G^I$ to $ge_I$.
One checks that this is a continuous morphism.
\end{proof}

\section{WAP representations and mixing} \label{sec:mixing}

Let $k$ be a topological field.
Let $V,V'$ be $k$-vector spaces and $\form{\cdot}{\cdot}:V\times V'\to k$ a bilinear form.
For $v\in V$ and $\phi\in V'$ we denote $\phi(v)=\form{v}{\phi}$.
We assume that the elements of $V'$ separate the points in $V$ and the elements of $V$ separate the points of $V'$.
We denote by $\End(V)^{V'}$ the algebra of endomorphisms $T\in\End(V)$ satisfying for every $\phi\in V'$ that $\phi\circ T$ is represented by an element (necessarily unique) of $V'$, to be denoted $T\phi$.

We endow $V$ with the {\em weak topology}, namely the weakest topology for which every $\phi\in V'$ is a continuous function to $k$.
Note that the elements of $\End(V)^{V'}$ are continuous functions from $V$ to $V$.
Considering the Tychonoff topology on $(V,\mbox{weak})^V$, using the embedding $\End(V) \to V^V$, $T\mapsto (Tv)_v$,
we obtain the {\em weak operator topology} on $\End(V)$, and in particular on $\End(V)^{V'}$.
Check that the composition operation on $\End(V)^{V'}$ is continuous (separately) in each variable, thus $\End(V)^{V'}$ becomes a semi-topological monoid.
Note that $A\subset \End(V)^{V'}$ is precompact if and only if $Av$ is precompact in $V$ for every $v$.

Let $G$ be a topological group.
By a {\em continuous representation} of $G$ to $V$ we mean a continuous monoid homomorphism $\rho:G\to \End(V)^{V'}$.
The representation
$\rho$ is said to be {\em weakly almost periodic}, or WAP, if $\rho(G)$ is precompact in $\End(V)^{V'}$,
or equivalently, if $\rho(G)v$ is precompact in $V$ for every $v\in V$.
In that case, $\overline{\rho(G)}$ is a semi-topological compact monoid.

A representation of $G$ on a Banach space $V$ is said to be a {\em Banaach WAP representaton} if it is WAP with respect to the pairing of $V$ with $V^*$, the space of bounded functionals on $V$.

\begin{exam}
Let $U$ be a Banach space, and consider a strongly continuous homomorphism $G\to \Iso(U)$.
Let $V=U^*$ and $V'=U$, the pairing be the usual one, and the representation $\rho$ be the contragradient representation. By Banach--Alaoglu theorem, $\rho$ is a WAP representation.
A special case of this example is any isometric representation on a reflexive Banach space, and in particular any unitary representation on a Hilbert space.
In these cases $\rho$ is a Banach WAP representation.
\end{exam}

The following is an immediate application of Theorem~\ref{universal}.

\begin{cor}
Let $G$ be a locally compact topological group and let $i:G\to X$ be its universal monoid representation into a compact semi-topological monoid.
Then every WAP-representation $\rho:G\to \End(V)^{V'}$ factors as a representation of $X$, that is there exists a continuous monoid homomorphism
$\rho':X\to \End(V)^{V'}$ such that $\rho=\rho'\circ i$.
\end{cor}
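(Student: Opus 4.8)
The plan is to realize $\rho$ itself as a monoid representation into a compact semi-topological monoid and then invoke the universal property recorded in Theorem~\ref{universal}. First I would set $Y=\overline{\rho(G)}\subseteq \End(V)^{V'}$, the closure of the image in the weak operator topology. By the WAP hypothesis, $\rho(G)v$ is precompact in $V$ for every $v\in V$, so $\rho(G)$ is precompact in $\End(V)^{V'}$ and $Y$ is compact; as observed in the paragraph preceding the statement, separate continuity of composition then makes $Y$ a compact semi-topological monoid, closed under multiplication and containing the identity $\rho(1_G)$.

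Next I would observe that $\rho$, regarded now as a map $G\to Y$, is a monoid representation with dense image in the sense of the definition preceding Theorem~\ref{universal}: it is a continuous monoid homomorphism (since $Y$ carries the subspace topology and $\rho$ is continuous into $\End(V)^{V'}$), its image is dense in $Y$ by construction, and its target is a compact semi-topological monoid. Hence the universal property of $i:G\to X$ applies and yields a unique continuous monoid homomorphism $k:X\to Y$ with $\rho=k\circ i$.

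Finally I would compose $k$ with the inclusion $\iota:Y\hookrightarrow \End(V)^{V'}$, which is itself a continuous monoid homomorphism because $Y$ carries the subspace topology and is a submonoid. Setting $\rho'=\iota\circ k:X\to \End(V)^{V'}$ produces a continuous monoid homomorphism satisfying $\rho'\circ i=\iota\circ k\circ i=\iota\circ\rho=\rho$, which is exactly the asserted factorization.

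I expect no serious obstacle here, and indeed the text flags the result as immediate. The only point that genuinely requires care is the claim that $Y=\overline{\rho(G)}$ is a compact semi-topological monoid, i.e.\ that passing to the closure preserves both the monoid structure and the separate continuity of the product. This is precisely the content of the remark just before the statement, and it rests on the precompactness furnished by the WAP assumption together with the separate continuity of composition already established for $\End(V)^{V'}$.
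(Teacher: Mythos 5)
Your proof is correct and is exactly the argument the paper intends: the corollary is stated there as an immediate application of Theorem~\ref{universal}, with the key fact that $\overline{\rho(G)}$ is a compact semi-topological monoid already recorded in the definition of a WAP representation. Passing to the closure $Y=\overline{\rho(G)}$, invoking the universal property, and composing with the inclusion into $\End(V)^{V'}$ is precisely the intended route, so there is nothing to add.
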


For a locally compact topological group $G$, $\rho:G\to \End(V)^{V'}$ is said to be {\it mixing} if for every $v\in V$, $\phi\in V'$, 
$$
 \lim_{g\to\infty}\form{gv}{\phi}= 0.
$$
Theorem~\ref{structure} gives a structure theorem of representation of semisimple groups.

\begin{thm}\label{thm:GHM}
Let $G$ be a semisimple group and let $\rho:G\to V$ be a WAP representation.
Then $V$ decomposes as a direct sum of representations $V=\bigoplus_{I\subset\{1,\ldots,n\}} V_I$ such that on $V_I$ the $G$-representation factors through $G^I$ and proper factors of $G^I$ have no fixed points in $V_I$.
Furthermore, for every $I$, the representation $V_I$ is $G^I$ mixing.
\end{thm}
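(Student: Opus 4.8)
The plan is to reduce everything to the monoid structure theorem. First I would invoke the factorization corollary preceding this theorem (every WAP representation factors through the universal monoid representation) together with Theorem~\ref{structure} to present the given representation as a continuous monoid homomorphism $\rho':\check G\to\End(V)^{V'}$ with $\rho=\rho'\circ i$. Writing $e_I$ for the identity of the group $G^I\subset\check G$, I set $P_I=\rho'(e_I)$. Using the monoid law $e_Ie_J=e_{I\cup J}$ and the identity $ge_I=e_Ig=\phi_I(g)$ in $\check G$, one checks directly that the $P_I$ are commuting idempotents with $P_\emptyset=\mathrm{id}$ and $P_IP_J=P_{I\cup J}$, that each $P_I$ commutes with $\rho(G)$, and that $G_I$ acts as the identity on $\mathrm{Im}(P_I)$ because $ge_I=e_I$ for $g\in G_I$.

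The second step is purely algebraic Möbius inversion on the Boolean lattice. I would define
\[ Q_I=\sum_{J\supseteq I}(-1)^{|J\setminus I|}P_J=\prod_{j\notin I}\bigl(P_I-P_{I\cup\{j\}}\bigr), \]
the two expressions agreeing because $P_{I\cup S}=\prod_{k\in S}P_{I\cup\{k\}}$. A short inclusion–exclusion computation shows $\sum_I Q_I=P_\emptyset=\mathrm{id}$, that the $Q_I$ are pairwise orthogonal idempotents, and (since the $P_J$ are) that they commute with $\rho(G)$. Setting $V_I=Q_IV$ then yields the required $G$-invariant decomposition $V=\bigoplus_I V_I$, and from $P_IQ_I=Q_I$ we get $V_I\subseteq\mathrm{Im}(P_I)$, so $G_I$ acts trivially on $V_I$ and the representation there factors through $G^I$.

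The geometric input enters in the no-fixed-point and mixing clauses, and this is where I expect the main work. For $j\notin I$ I would first show that the $G_j$-fixed vectors inside $\mathrm{Im}(P_I)$ are exactly $\mathrm{Im}(P_{I\cup\{j\}})$: since $G_j$ is almost simple and hence WAP-rigid, $e_{\{j\}}=\check G\text{-}\lim_{g\to\infty,\,g\in G_j} i(g)$, so by continuity of $\rho'$ the operator $P_{\{j\}}$ is a weak-operator limit of $\rho(g)$ and therefore fixes every $G_j$-fixed vector; intersecting with $\mathrm{Im}(P_I)$ lands such a vector in $\mathrm{Im}(P_{I\cup\{j\}})$. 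Because the factor $P_I-P_{I\cup\{j\}}$ annihilates $\mathrm{Im}(P_{I\cup\{j\}})$, we get $Q_I\,\mathrm{Im}(P_{I\cup\{j\}})=0$, so $V_I$ contains no nonzero $G_j$-fixed vector, which is exactly the statement that proper factors of $G^I$ have no fixed points in $V_I$.

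Finally, for mixing I would observe that for $h\in G^{I\cup J}$ with $J\neq\emptyset$ and $v\in V_I$ one has $\rho'(h)v=\rho'(h)P_{I\cup J}v$, while $P_{I\cup J}Q_I=0$ since any $j\in J$ produces a vanishing factor $(P_I-P_{I\cup\{j\}})P_{I\cup J}=0$; thus every cluster operator of $\rho(G^I)$ at infinity acts as $0$ on $V_I$. Since $\rho(G^I)v$ is precompact in $V_I$ by the WAP hypothesis, and all its cluster points as $g\to\infty$ are these vanishing boundary operators, the net $\rho(g)v$ converges weakly to $0$, i.e. $\form{gv}{\phi}\to0$ for every $\phi\in V_I'$, which is precisely $G^I$-mixing. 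The hardest point to get exactly right is the identification of the cluster operators at infinity with the $P_{I\cup J}$, together with the verification that the topology on $\check G$ genuinely makes $e_{\{j\}}$ the limit of $i(g)$ along $g\to\infty$ in $G_j$, so that the continuity of $\rho'$ can be applied.
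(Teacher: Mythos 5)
Your proposal is correct and takes essentially the same approach as the paper: the paper offers no written proof of Theorem~\ref{thm:GHM}, presenting it as an immediate consequence of the factorization corollary and Theorem~\ref{structure}, and your argument is exactly a careful implementation of that derivation --- factoring $\rho$ through $\check{G}$, setting $P_I=\rho'(e_I)$, M\"obius-inverting to obtain the commuting projections $Q_I$, and identifying the cluster operators of $G^I$ at infinity with $\rho'$ applied to the deeper strata $G^{J}$, $J\supsetneq I$. The only points worth flagging are cosmetic: you implicitly correct the paper's typo by taking products in $\check{G}$ to land in $G^{I\cup J}$ rather than $G^{I\cap J}$, and the limit $i(g)\to e_{\{j\}}$ as $g\to\infty$ in $G_j$ that you single out as delicate does follow from the paper's net characterization of the topology on $\check{G}$, because $\phi_J|_{G_j}$ is proper for every $J$ not containing $j$ (the simple factors intersect in finite central subgroups).
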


A special case of Theorem \ref{thm:GHM} is the classical theorem of Howe--Moore \cite{HM}.

\begin{thm}[{\cite[Theorem~5.1]{HM}}] \label{HM} \label{thm8.4}
Let $G$ be a semisimple analytic group with a finite center (the $F$ point of a Zariski connected semisimple algebraic  group $\mathbb{G}$, defined over a local field $F$) and no compact factor.
Then every ergodic probability preserving action is mixing modulo the action kernel.
\end{thm}

\begin{proof}
Apply the last corollary to the Koopman representation.
\end{proof}


\section{Banach modules}\label{sec:Banach}
We shall now concentrate on the special case of uniformly bounded representations on Banach spaces. The main result of this section, Theorem \ref{dual}, is a  straightforward consequence of Theorem \ref{thm:GHM}, when $G$ is a semisimple group. However, because of the importance of this special case, and for the convenience of the users, we decided to give a self contained discussion that avoids the more general notion of WAP representations. In particular, we shall provide an alternative proof for Theorem \ref{dual}. Since we shall rely in this section only on Theorem \ref{mainthm}, we can state the results for the class of quasi-semisimple groups rather than semisimple groups.
\medskip

Let $V$ be a Banach space and $S$ the norm uniform structure on $V$. We denote by $\mathcal{B}(V)$ the algebra of bounded linear operator on $V$ and by $\GL(V)$ the group of invertibles in $\mathcal{B}(V)$. A group representation $\rho:G\to \GL(V)$ is said to be uniformly bounded if 
$$
 \sup_{g\in G} \|\rho(g)\|_{op} < \infty,
$$
i.e. if it induces a uniform action on $(V,S)$. We denote by $\rho^*:G\to\GL(V^*)$ the dual (contragradient) representation. Since $\|\rho(g)^*\|_{op}=\|\rho(g)\|_{op}$, $\rho^*$ is uniformly bounded iff $\rho$ is.
We will focus on the case where $G$ is a topological group and the representation $\rho$ is continuous with respect to the strong operator topology.

\begin{defn}
We will say that $(V,\rho)$ is a $G$-Banach module if $V$ is a Banach space, $G$ is a topological group and $\rho:G\to \GL(V)$ is a uniformly bounded representation which is continuous in the sense that the map $G\times V\to V,~(g,v)\mapsto \rho(g)(v)$ is continuous. We will say that $(V,\rho)$ is a $G$-Banach $*$-module if also the dual representation $\rho^*:G\to \GL(V^*)$ is continuous in the same sense.
\end{defn}

By Lemma \ref{Joint}, $\rho$ is continuous iff its orbit maps are continuous.
We note that by \cite[Corollary~6.9]{Me98} if $V$ is Asplund (e.g if $V$ is reflexive) and $G$ is an arbitrary topological group then every $G$-Banach module is automoatically a $*$-module.

Apart from the norm topology, $V$ and $V^*$ are equipped with the weak and the weak$^*$ (hereafter $w$ and $w^*$) topologies. It is obvious that these topologies are compatible with the norm uniform structure.
If $G$ is locally compact, it follows by a standard argument of
approximating identity in $L^1(G)$ that a uniformly bounded representation is strongly continuous iff it is weakly continuous, see for example \cite[Theorem~2.8]{DLG}. This is also the case when $V$ is reflexive and $G$ is arbitrary, see \cite{Me01,Me03}.

\begin{thm} \label{dual}
Let $G$ be a quasi-semisimple group.
Let $(V,\rho)$ be a $G$-Banach $*$-module.
Assume that no point in $V^*\setminus\{0\}$ is fixed by a non-compact normal subgroup of $G$.
Then for every $f\in V^*$, 
$$
 \overline{Gf}^{w^*}=Gf\cup\{0\},
$$
and $\rho$ is mixing in the sense that all matrix coefficients tend to 0.
\end{thm}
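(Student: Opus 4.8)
The plan is to deduce the statement directly from the main Theorem~\ref{mainthm}, applied to the dual space $V^*$ equipped with two structures: the norm uniform structure $S$ and the weak$^*$ topology $T=w^*$. First I would verify that the hypotheses of Theorem~\ref{mainthm} are met in this setting. Since $(V,\rho)$ is a $G$-Banach $*$-module, the dual representation $\rho^*:G\to \GL(V^*)$ is uniformly bounded, so $G$ acts by uniformly bounded (in particular equicontinuous, after renorming or simply because the orbit of any uniformity under a uniformly bounded family stays a uniformity) operators on $(V^*,S)$; this gives equicontinuity with respect to $S$. The weak$^*$ topology is compatible with the norm uniform structure (as noted in the text, the weak and weak$^*$ topologies are compatible with $S$). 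The action on $(V^*,w^*)$ is continuous because $\rho^*$ is strongly continuous by the $*$-module assumption, and the action on $(V^*,T_S)=(V^*,\text{norm})$ is continuous for the same reason — here I should check that $T_S$ is exactly the norm topology on $V^*$, which holds since the $S$-topology generated by the norm uniform structure is the norm topology. The no-fixed-point hypothesis of Theorem~\ref{mainthm} is precisely the assumption that no non-compact normal subgroup fixes a nonzero point of $V^*$, provided we arrange that $0$ is handled separately.

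The subtlety is that $0\in V^*$ is fixed by all of $G$, so Theorem~\ref{mainthm} cannot be applied to the whole of $V^*$. To circumvent this I would work on the $G$-invariant subset $V^*\setminus\{0\}$, or equivalently track the orbit closures carefully. The cleanest route is to apply Corollary~\ref{orbits}: on the locally compact (in the norm topology) $G$-space $V^*\setminus\{0\}$ the orbits are $w^*$-closed and the stabilizers are compact. Concretely, for a fixed $f\in V^*\setminus\{0\}$, consider the map $\phi:G\times (V^*\setminus\{0\},T_S)\to (V^*\setminus\{0\},T_S)\times(V^*\setminus\{0\},w^*)$ of Theorem~\ref{mainthm}; universal closedness gives, via Proposition~\ref{variant}, that $Gf$ is $w^*$-closed in $V^*\setminus\{0\}$. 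Hence the $w^*$-closure of $Gf$ taken in all of $V^*$ can only acquire the single extra point $0$, i.e. $\overline{Gf}^{w^*}\subseteq Gf\cup\{0\}$. For the reverse inclusion, I would observe that since $\Stab_G(f)$ is compact and $G$ is non-compact (the no-fixed-point hypothesis forces a non-compact normal subgroup to act, so $G$ is non-compact), the orbit $Gf$ is non-compact, hence not $w^*$-compact; by Banach--Alaoglu the $w^*$-closure of the bounded set $Gf$ is $w^*$-compact, so the closure must strictly contain $Gf$, forcing $0\in \overline{Gf}^{w^*}$. This establishes $\overline{Gf}^{w^*}=Gf\cup\{0\}$.

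Finally, mixing of the matrix coefficients follows almost formally. Fix $v\in V$ and $f\in V^*$; the matrix coefficient is $g\mapsto \form{\rho(g)v}{f}=\form{v}{\rho^*(g^{-1})f}$. To show $\lim_{g\to\infty}\form{v}{\rho^*(g)f}=0$, I would argue by nets: if some net $g_\alpha\to\infty$ in $G$ had $\form{v}{\rho^*(g_\alpha)f}$ bounded away from $0$, then by $w^*$-compactness of the bounded orbit closure we could pass to a subnet with $\rho^*(g_\alpha)f\to h$ in $w^*$ for some $h$ with $\form{v}{h}\neq 0$, so $h\neq 0$; but then $h\in\overline{Gf}^{w^*}=Gf\cup\{0\}$ with $h\neq 0$ gives $h=\rho^*(g_0)f$ for some $g_0\in G$, meaning $\rho^*(g_\alpha g_0^{-1})f$ accumulates at $f$ along a net escaping to infinity — this contradicts the compactness of $\Stab_G(f)$, since the properness provided by Corollary~\ref{orbits} forces $g_\alpha g_0^{-1}$ to stay in a compact set.

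\textbf{Main obstacle.} I expect the principal difficulty to be the careful bookkeeping around the fixed point $0$: Theorem~\ref{mainthm} genuinely requires that no non-compact normal subgroup has a fixed point, and $0$ is fixed by everything, so all applications must be localized to $V^*\setminus\{0\}$, and one must argue separately that $0$ (and only $0$) is added to each orbit in passing to the $w^*$-closure in the full space. The verification that the restriction of the relevant maps to $V^*\setminus\{0\}$ still satisfies the universal-closedness conclusion — rather than merely properness — and the deduction of mixing from orbit-closedness plus stabilizer-compactness via a net argument are the steps that demand the most care.
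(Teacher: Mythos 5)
Your overall strategy coincides with the paper's: apply Theorem~\ref{mainthm}/Corollary~\ref{orbits} with the norm uniform structure $S$ and the weak$^*$ topology $T$, conclude that orbits of nonzero functionals are $w^*$-closed with compact stabilizers, use Alaoglu together with non-compactness of the orbit to see that $0$ is the unique extra point acquired in the closure, and deduce mixing from properness. However, there is a genuine gap in your verification of the hypotheses. You apply the theorem to $X=V^*\setminus\{0\}$ and assert that the action on $(V^*,w^*)$ is continuous ``because $\rho^*$ is strongly continuous.'' Theorem~\ref{mainthm} requires \emph{joint} continuity of $G\times (X,T)\to (X,T)$, and strong continuity of $\rho^*$ only yields this on norm-bounded subsets: if $g_\alpha\to g$ in $G$ and $f_\alpha\to f$ in $w^*$, then $\form{v}{\rho^*(g_\alpha)f_\alpha}-\form{v}{\rho^*(g)f_\alpha}$ is controlled by $\|\rho(g_\alpha^{-1})v-\rho(g^{-1})v\|\cdot\|f_\alpha\|$, and a $w^*$-convergent \emph{net} (unlike a sequence) can be norm-unbounded. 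In fact, joint $w^*$-continuity at a point $(g_0,f_0)$ forces the translates $\{\rho(g^{-1})v : g\in U\}$, for some neighborhood $U$ of $g_0$, to lie in a finite-dimensional subspace of $V$, which fails for any infinite-dimensional module of interest. Since the proof of Theorem~\ref{mainthm} genuinely uses joint $T$-continuity (Step 1 multiplies the $T$-convergent net $g_\alpha x_\alpha$ by the convergent net $c_\alpha^{-1}$), the theorem cannot be invoked on all of $V^*\setminus\{0\}$ as you do. Your parenthetical claim that $V^*\setminus\{0\}$ is locally compact in the norm topology is also false in infinite dimensions, though nothing essential rests on it.

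The repair is exactly the paper's one extra device: run the entire argument inside a norm-bounded $G$-invariant set. The paper takes $X=\overline{\conv(Gf)}\setminus\{0\}$ (the $w^*$-closure $\overline{Gf}^{w^*}\setminus\{0\}$ would serve as well). Uniform boundedness of $\rho^*$ makes this set bounded, so $X\cup\{0\}$ is $w^*$-compact by Alaoglu, $(X,w^*)$ is locally compact, and on bounded sets the computation above \emph{does} give joint $w^*$-continuity; the no-fixed-point hypothesis is inherited because $X$ is a $G$-invariant subset of $V^*\setminus\{0\}$. From there your argument is the paper's: $Gf$ is $w^*$-closed in $X$ and homeomorphic to $G/G_f$ with $G_f$ compact, hence non-compact, hence not closed in the compact set $X\cup\{0\}$, so $\overline{Gf}^{w^*}=Gf\cup\{0\}$, and mixing follows since $Gf$ is a proper $G$-space sitting inside the compact set $Gf\cup\{0\}$. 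One further small caution on your last step: ``properness forces $g_\alpha g_0^{-1}$ to stay in a compact set'' is not immediate, because a $T$-convergent net together with its limit need not form a compact set; instead use characterization (3) of Theorem~\ref{CU} (a net with no convergent subnet in $G\times X$ has image with no convergent subnet), or the local compactness of $(X,w^*)$, to reach the contradiction.
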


\begin{proof}
Given $f\in V^*\setminus\{0\}$, consider the space $X=\overline{\conv(Gf)}\setminus\{0\}$.
Let $S$ be the norm uniform structure on $X$ and $T$ the weak*-topology.
By the Hahn--Banach and Alaoglu's theorems $(X,T)$ is locally compact.
By Corollary~\ref{orbits}, $Gf$ is weak*-closed in $X$ and homeomorphic to the coset space $G/G_f$, where the stabiliser $G_f$ is compact. Thus the orbit $Gf$ is non-compact.
It follows that it is not weak*-closed in the compact space $\overline{\conv(Gf)}$, and hence that $\overline{Gf}^{w^*}=Gf\cup\{0\}$. Since the latter is compact while $Gf$ is a proper $G$ space, it follows that $gf\to 0$ (in the weak-$*$ sense) when $g\to\infty$ in $G$.
\end{proof}

\begin{rem}
It follows, for instance, that for a non-compact QSS simple group $G$, the existence of a nonzero invariant vector (or more generally  a vector with a non-compact stabiliser) in a Banach $*$-module $V$ implies the existence of a non-zero invariant vector in $V^*$. This property does not hold for general groups; for example consider the regular representation of a discrete non-amenable group $\gC$ on the space $L^\infty(\gC)$. 
\end{rem}


When $V$ is reflexive, the a priory weaker assumption that $G$ doesn't fix a vector in $V$, is actually sufficient. 

\begin{lem}
Let $L$ be a group and $\rho:L\to\GL(V)$ a uniformly bounded linear representation on a reflexive Banach space $V$. If $L$ has a non-zero invariant vector in $V^*$ then it has a non-zero invariant vector in $V$.
\end{lem}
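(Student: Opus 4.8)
The plan is to promote the invariant functional to an invariant affine hyperplane and then locate an invariant vector inside it by a fixed-point argument. Let $0\neq f\in V^*$ be the given $L$-invariant functional, so $f(\rho(g)v)=f(v)$ for all $g\in L$, $v\in V$, and after rescaling pick $v_0\in V$ with $f(v_0)=1$. Then the affine hyperplane $H=f^{-1}(1)=v_0+\ker f$ is $L$-invariant, and each $\rho(g)$ restricts to a weakly continuous affine bijection of $H$. The point of this reduction is that any common fixed point of $L$ lying in $H$ is automatically nonzero (its value under $f$ is $1$) and is precisely the invariant vector we seek; thus it suffices to produce a fixed point inside $H$.

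Next I would manufacture a weakly compact, convex, $L$-invariant subset of $H$ to which a fixed-point theorem can be applied. Writing $M=\sup_{g\in L}\|\rho(g)\|_{op}<\infty$ (the uniform boundedness standing in the Banach-module setting of this section), the orbit $\{\rho(g)v_0 : g\in L\}$ lies in $H$ and is norm-bounded by $M\|v_0\|$. Set $Q=\overline{\conv\{\rho(g)v_0 : g\in L\}}\subset H$. As a bounded, closed, convex subset of the reflexive space $V$, the set $Q$ is weakly compact; being the closed convex hull of an $L$-invariant orbit under the continuous linear maps $\rho(g)$, it is nonempty and $L$-invariant, and each $\rho(g)$ restricts to a weakly continuous affine self-map of $Q$.

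Finally I would invoke the Ryll-Nardzewski fixed-point theorem for the semigroup $\{\rho(g)|_Q : g\in L\}$. The one hypothesis requiring care is the noncontracting (distal) condition: for distinct $x,y\in Q$ one needs $\inf_{g}\|\rho(g)x-\rho(g)y\|>0$. I expect this to be the main obstacle, and it is exactly the place where uniform boundedness — rather than mere reflexivity — is essential: a single non-uniformly-bounded invertible operator (for instance $\rho(1)=I+A$ with $A$ an injective quasinilpotent weighted shift, for which $\rho(1)^*$ has a fixed vector while $\rho(1)$ has none) already violates the conclusion. Since $L$ is a group, $M=\sup_g\|\rho(g)\|_{op}$ also bounds $\|\rho(g^{-1})\|_{op}$, so $\|\rho(g)w\|\ge M^{-1}\|w\|$ and hence $\|\rho(g)x-\rho(g)y\|=\|\rho(g)(x-y)\|\ge M^{-1}\|x-y\|>0$ uniformly in $g$, verifying the condition. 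Ryll-Nardzewski then yields a common fixed point $v^*\in Q\subset H$; as $f(v^*)=1$, the vector $v^*$ is a nonzero $L$-invariant element of $V$, completing the argument.
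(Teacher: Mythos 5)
Your proof is correct (given the uniform boundedness that is the standing assumption on Banach modules in this section), and it hinges on the same key theorem as the paper's own proof: the Ryll--Nardzewski fixed-point theorem, applied to a weakly compact convex $L$-invariant set inside the affine hyperplane $f^{-1}(1)$. The difference lies in the choice of that set, and it is worth recording. The paper normalizes $\|f\|=1$ and takes the supporting set $S_f=\{v\in V:\form{f}{v}=\|v\|=1\}$; this is nonempty because functionals on reflexive spaces attain their norm (this weak-compactness fact, rather than Hahn--Banach alone, is what makes $S_f\neq\emptyset$), but $S_f$ is $L$-invariant only if $\rho$ acts by isometries, so the paper's argument tacitly includes a preliminary renorming by the equivalent invariant norm $\|v\|'=\sup_{g\in L}\|\rho(g)v\|$, which preserves reflexivity. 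You instead take $Q=\overline{\conv\{\rho(g)v_0:g\in L\}}$ with $f(v_0)=1$, which is invariant by construction and weakly compact by reflexivity, and you extract the noncontracting (distal) hypothesis of Ryll--Nardzewski from the two-sided bound $M^{-1}\|w\|\le\|\rho(g)w\|\le M\|w\|$ rather than from exact isometry. What your version buys is that it applies verbatim to uniformly bounded representations with no renorming and no appeal to norm attainment; what the paper's version buys is brevity once the isometric normalization is granted. Your further observation that uniform boundedness cannot be dropped is also correct and is a genuine catch: for an injective quasinilpotent weighted shift $A$, the operator $I+A$ is invertible, has no nonzero fixed vector since $\ker A=0$, yet its adjoint fixes the nonzero space $\ker A^*$; so the representation $n\mapsto(I+A)^n$ of $\BZ$ on a Hilbert space violates the conclusion, showing that the lemma as literally stated (with no boundedness hypothesis on $\rho$) really does rely on the implicit Banach-module assumption, exactly as the paper's proof implicitly relies on isometry.
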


\begin{proof}
Suppose that $f\in V^*$ is an $L$-invariant norm one functional.
The invariant set of supporting unit vectors
$$
  S_f=\{v\in V:\langle f,v\rangle=\| v\|=1\}
$$
is non-empty by the Hahn--Banach theorem and weakly compact by Alaoglu's theorem.
Hence the Ryll-Nardzewski fixed-point theorem implies that
$L$ admits a fixed point in $S_f$
\end{proof}

\begin{cor}[Howe--Moore's theorem for reflexive Banach spaces, \cite{veech}] \label{reflexive}
Let $G$ be a quasi-semisimple group.
Let $(V,\rho)$ be a reflexive $G$-Banach module.
Assume that no point in $V\setminus\{0\}$ is fixed by a non-compact normal subgroup of $G$.
Then for every $f\in V^*$, $\overline{Gf}^{w^*}=Gf\cup\{0\}$, and $\rho$ is mixing.\end{cor}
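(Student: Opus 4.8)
The plan is to derive the corollary from Theorem~\ref{dual} in two moves: upgrade $(V,\rho)$ to a $G$-Banach $*$-module, and transfer the fixed-point hypothesis from $V$ to $V^*$ by means of the preceding lemma.

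First I would verify that $(V,\rho)$ is a $G$-Banach $*$-module, i.e. that the contragredient representation $\rho^*\colon G\to\GL(V^*)$ is continuous. For $f\in V^*$ and $v\in V$ one has $\langle\rho^*(g)f,v\rangle=\langle f,\rho(g^{-1})v\rangle$, and strong continuity of $\rho$ forces the right-hand side to tend to $\langle f,v\rangle$ as $g\to e$; thus $\rho^*$ is weak$^*$-continuous. Since $V$ is reflexive, the weak$^*$ and weak topologies on $V^*$ coincide, so $\rho^*$ is weakly continuous, and because $G$ is locally compact (being qss) and $\rho^*$ is uniformly bounded, the standard approximate-identity argument in $L^1(G)$ promotes this to strong continuity of $\rho^*$. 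Hence $(V,\rho)$ is a $*$-module.

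The heart of the matter is the transfer of hypotheses. Theorem~\ref{dual} requires that no non-zero vector of $V^*$ be fixed by a non-compact normal subgroup, whereas we are only given this for $V$. I would argue by contradiction: if a non-compact normal subgroup $N\lhd G$ fixed some non-zero $f\in V^*$, then, viewing $\rho|_N$ as a uniformly bounded representation of $N$ on the reflexive space $V$ with $f$ a non-zero $N$-invariant functional, the preceding lemma (applied with $L=N$) would furnish a non-zero $N$-invariant vector in $V$, contradicting the assumption of the corollary. Therefore no non-zero vector of $V^*$ is fixed by a non-compact normal subgroup, all hypotheses of Theorem~\ref{dual} hold, and it delivers $\overline{Gf}^{w^*}=Gf\cup\{0\}$ for every $f\in V^*$ together with mixing of $\rho$.

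I expect the one point needing care to be this transfer step: to legitimately invoke the lemma for $\rho|_N$, which is only uniformly bounded rather than isometric, one should first replace the norm of $V$ by the equivalent $N$-invariant norm $\|v\|_N=\sup_{n\in N}\|\rho(n)v\|$. This renorming keeps $V$ reflexive and makes $\rho|_N$ isometric, so that the supporting-vector set in the lemma is genuinely $N$-invariant and the Ryll--Nardzewski theorem applies; being fixed by $N$ is a norm-independent property, so the resulting vector still contradicts the hypothesis. Everything else is routine bookkeeping around Theorem~\ref{dual}.
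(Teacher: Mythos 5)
Your proposal is correct and follows essentially the same route the paper intends (the paper leaves the proof of Corollary~\ref{reflexive} implicit): upgrade the reflexive $G$-Banach module to a $*$-module using reflexivity together with the approximate-identity remark on weak versus strong continuity, transfer the no-fixed-vector hypothesis from $V$ to $V^*$ by applying the preceding Ryll--Nardzewski lemma to the non-compact normal subgroup, and then invoke Theorem~\ref{dual}. Your renorming remark ($\|v\|_N=\sup_{n\in N}\|\rho(n)v\|$) is a welcome extra precaution, since the paper's lemma tacitly assumes the supporting set $S_f$ is invariant, which indeed requires the isometric (or renormed) setting.
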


We conclude this paper by remarking that for every group $G$, every WAP function on $G$ appears as a matrix coefficient of some reflexive representation.
This result is due to \cite{Me03}, following the important main theorem of \cite{DFJP}.
In this regard, one reverses the logical order and use Corollary~\ref{reflexive} in order to prove
results on WAP compactifications.


\section{Further discussion} \label{sec:hqss}

In writing this paper, our general attitude was to use the axiomatic approach as long as it simplifies and clarifies the discussion, but to keep in mind that the main objects of interest are the classical semisimple groups over local fields.
We held, until now, the temptation of further generalize and axiomatize the results in the price of possibly obscuring their formulation.
We will do this (generalize and obscure) in this section.

\begin{defn}
A locally compact group is called hereditary qss (or hqss) if every non-compact quotient of it is qss.
\end{defn}

It is clear that semisimple groups over local fields are hqss.
We invite the enthusiastic reader to check that (mutatis mutandis) 
Theorem~\ref{thm6.2},
Theorem~\ref{thm6.4},
Theorem~\ref{thm6.6},
Corollary~\ref{cor6.7},
Corollary~\ref{cor6.8},
Corollary~\ref{cor6.9}
and
Theorem~\ref{thm8.4}
are all still valid for the class of hqss group.

Obviously, every simple qss group is hqss (more generally: almost simple qss groups, qss groups for which every proper normal subgroup is precompact),
thus the examples discussed in Theorem~\ref{thm3.6} are hqss.
Note also that the class HQSS consisting of hqss groups is closed under taking products (and quotients).
The reader is also invited to check that Theorem~\ref{7.13} is valid for (almost) simple qss groups and Theorem~\ref{thm7.14} for products of such.

\end{document}